\documentclass[11pt,reqno]{amsart}
\usepackage{amssymb, amscd, amsmath, color, enumitem, xfrac}
\usepackage{mathrsfs, bm, cite}
\usepackage{graphics}
\usepackage{tikz}
\usepackage{pgfplots}
\usepackage{pgfplotstable}
\usepackage{xcolor}
\definecolor{darkgreen}{rgb}{0,0.5,0}
\usepackage{tkz-fct}
\usetikzlibrary{pgfplots.polar}
\pgfplotsset{compat=newest}
\usepgfplotslibrary{fillbetween}
\usetikzlibrary{patterns}
\input epsf

\numberwithin{equation}{section}
\newtheorem{theorem}{Theorem}[section]
\newtheorem{lemma}[theorem]{Lemma}
\newtheorem{corollary}[theorem]{Corollary}

\theoremstyle{definition}
\newtheorem{definition}[theorem]{Definition}
\newtheorem{example}[theorem]{Example}
\theoremstyle{remark}
\newtheorem{remark}[theorem]{Remark}

\def\XXint#1#2#3{{\setbox0=\hbox{$#1{#2#3}{\int}$ }
\vcenter{\hbox{$#2#3$ }}\kern-.6\wd0}}

\usepackage{hyperref}

\begin{document}
\title[On the Luxemburg Functional in the Concave Regime]{On the Luxemburg Functional in the Concave Regime with an Application to Nonlocal Differential Equations}
\author[C. S. Goodrich]{Christopher S. Goodrich}
%\address{Department of Mathematics\\
%Creighton Preparatory School\\
%Omaha, NE 68114 USA}
\address{School of Mathematics and Statistics\\
UNSW Sydney\\
Sydney, NSW 2052 Australia}
\email[Christopher S. Goodrich]{c.goodrich@unsw.edu.au}
%\urladdr{http://www.math.unl.edu/~s-cgoodri4/}
\author[G. Nakhl]{Gabriel Nakhl}
\address{School of Mathematics and Statistics\\
UNSW Sydney\\
Sydney, NSW 2052 Australia}
\email[Gabriel Nakhl]{g.nakhl@student.unsw.edu.au}
\keywords{Luxemburg functional; nonlocal differential equation; variable growth; positive solution; convolution.}
\subjclass[2020]{Primary: 34B10, 46B20, 46B40.  Secondary: 34B18, 44A35, 47H30.}
%\date{June 21, 2026}
%\dedicatory{The first author wishes to dedicate this paper to the memory of his beloved miniature dachshund Maddie Goodrich (16 March 2002 - 16 March 2020).}

\begin{abstract}
For $p\ : \ [0,1]\rightarrow(0,1)$ a continuous function, we consider the Luxemburg functional
\begin{equation}
\Lambda_{p(\cdot)}(u):=\inf\left\{\delta>0\ : \ \int_0^1\left|\frac{u(t)}{\delta}\right|^{p(t)}\ dt\le1\right\}.\notag
\end{equation}
We develop a suitable functional analytic framework for $\Lambda_{p(\cdot)}$ so that it can be incorporated into a positive order cone, which can then be used in the analysis of nonlocal differential equations having the general form
\begin{equation}
-A\!\left(\int_0^1 b(1-s)\big(u(s)\big)^{p(s)}\,ds\right)u''(t)=\lambda f\bigl(t,u(t)\bigr),\qquad t\in(0,1).\notag
\end{equation}
By means of a specific numerical example, we demonstrate that, as opposed to a methodology relying only on the supremum norm, our methodology via the Luxemburg functional yields an appreciably sharper localisation of any solutions of the nonlocal equation.
\end{abstract}

\maketitle

\section{Introduction}
Let $p\ : \ [0,1]\rightarrow(0,1)$ be a continuous function, and for $u\in\mathscr{C}\big([0,1]\big)$ consider the Luxemburg functional
\begin{equation}
\Lambda_{p(\cdot)}(u):=\inf\left\{\delta>0\ : \ \int_0^1\left|\frac{u(t)}{\delta}\right|^{p(t)}\ dt\le1\right\}.\notag
\end{equation}
Note that this construction is a one-dimensional analogue of the usual Luxemburg norm
\begin{equation*}
\|u\|_{L^{p(\cdot)}}=
\inf\left\{\delta>0:\int_{\Omega}\left|\frac{u(\bm{x})}{\delta}\right|^{p(\bm{x})}\,d\bm{x}\le1\right\},
\end{equation*}
where, in the latter case, $u\in L^{p(\cdot)}(\Omega)$, with $p(t)>1$ and $\Omega\subset\mathbb{R}^{n}$.  The Luxemburg \emph{norm} has been studied extensively -- see, for example, Diening, et al. \cite{diening_lebesgue_2011} and Khamsi and Kozlowski~\cite{khamsi_fixed_2015}.  And variable-type growth, for which the Luxemburg norm is well suited, occurs frequently in both differential equations and regularity theory -- see, for example, Chen, et al. \cite{chen_mathematical_2006}, Fey and Foss \cite{feyfoss2}, Rajagopal and R\r{u}\v{z}i\v{c}ka \cite{rajagopal1}, R\r{u}\v{z}i\v{c}ka \cite{ruzicka_electrorheological_2000}, Vetro \cite{vetro_variable_2022}, and Zhikov \cite{zhikov1}.

Our goal in this work is to build a suitable functional analytic theory for $u\mapsto\Lambda_{p(\cdot)}(u)$ so as to permit its use in the analysis of nonlocal differential equations of the form
\begin{equation}\label{eq1.1}
-A\left(\left(b*u^{p(\cdot)}\right)(1)\right)u''(t)
=
\lambda f\bigl(t,u(t)\bigr),
\qquad t\in(0,1),
\end{equation}
which will be subject to some given boundary data at $t=0$ and $t=1$, and where
\begin{equation*}
\left(b*u^{p(\cdot)}\right)(1)
:=
\int_0^1 b(1-s)\bigl(u(s)\bigr)^{p(s)}\,ds
\end{equation*}
is a finite convolution; note that the nonlocal coefficient in \eqref{eq1.1} incorporates a genuinely variable-growth quantity, where the variable exponent satisfies, as above $0<p(t)<1$.  Throughout we assume
\[
A:[0,+\infty)\to\mathbb{R}, \qquad
f:[0,1]\times[0,+\infty)\to[0,+\infty), \qquad
p:[0,1]\to(0,1)
\]
are continuous, and that $b\in L^1((0,1])$ is almost everywhere nonnegative.  The precise assumptions are stated at the beginning of Section 2.  Also, the number $\lambda>0$ occurring in \eqref{eq1.1} is a parameter, which characterises how the size of the forcing term $f$ affects the existence theory presented in Section 3.  As we will discuss in more detail toward the end of this introduction, there is a very large literature on nonlocal differential equations.  In particular, we emphasise that our nonlocal coefficient $A$ can be very general as, besides continuity, we only require that it be positive on a possibly very small measure that is not \emph{a priori} fixed.

Note that the convolutional form in \eqref{eq1.1} is broad enough to encompass a number of nonlocal effects. In particular, when $b(t)\equiv1$ the nonlocal term reduces to
\[
\int_0^1 \bigl(u(s)\bigr)^{p(s)}\,ds,
\]
and when, furthermore, $p(t)\equiv1$ we recover the familiar constant-growth Kirchhoff-type setting. More generally, setting $\displaystyle b(t):=\frac{1}{\Gamma(\alpha)}t^{\alpha-1}$, $\alpha>0$, permits Riemann-Liouville fractional integral nonlocal elements -- cf., \cite{goodrich_existence_2010,lan2,lan_compactness_2020}.

Now, as suggested in the first paragraph, in the range $0<p(t)<1$ the Luxemburg functional remains well defined, but it is no longer a norm in the Banach-space sense. Consequently, the analysis must be built on modular estimates and order-theoretic structure rather than on the full linear geometry available in the convex case. 

Recent work on H\"older-type inequalities in variable-exponent Lebesgue spaces shows that useful modular estimates remain available in quasinorm settings, including cases in which the exponent is allowed to fall below one \cite{burenkov_tararykova_holder}. This provides useful analytic background for the present concave regime. The main issue here, however, is not merely the availability of ambient H\"older-type estimates. Rather, we seek to build a hybrid cone in which enough of the classical order-theoretic and additive structure survives to permit the usual fixed point index framework to be retained. Thus, the novelty of the present approach lies in transporting sufficient supremum-norm geometry into the concave Luxemburg-type setting, rather than replacing the fixed point framework by a purely modular one.

A central difficulty in the concave regime is that direct supremum-norm estimates do not interact particularly naturally with variable-growth structure. Our point of departure, therefore, is to retain the order-theoretic advantages of a cone framework while replacing a traditionally supremum-based viewpoint by one adapted to the Luxemburg functional. This leads to the use of the collection
\[
\mathscr{A}_{\mathrm{hybrid}}
:=
\left\{
u\in\mathscr{C}([0,1]) :
\begin{aligned}
&u\ge0,\\
&\min_{t\in[\alpha,\beta]}u(t)\ge \eta_0\|u\|_\infty,\\
&(\bm{1}*u)(1)\ge \widetilde C_0\Lambda_{p(\cdot)}(u)
\end{aligned}
\right\},
\]
where $\bm{1}$ denotes the constant map $\bm{1}\colon\mathbb{R}\to\{1\}$, $0<\eta_0\le1$ and $0<\widetilde C_0<1$ are structural constants fixed precisely later in Theorem~\ref{thm:coercivity-calibration}, and $0\le\alpha<\beta\le1$. We use $\mathscr{A}_{\text{hybrid}}$ in our analysis of \eqref{eq1.1}.  Note that the Luxemburg functional coercivity relation $(\bm{1}*u)(1)\ge \widetilde C_0\Lambda_{p(\cdot)}(u)$ plays a central role in the efficacy of our approach.

We recently used in \cite{goodrich2025luxemburgnormlocalisationnonlocal} a set very similar to $\mathscr{A}_{\text{hybrid}}$ in the analysis of \eqref{eq1.1} in the \emph{convex} regime.  Due to the fact that \cite{goodrich2025luxemburgnormlocalisationnonlocal} was centred around the convex regime, i.e., $p(t)>1$, the usual Luxemburg norm structure was readily available to us.  Thus, a principal contribution of this present work is to demonstrate that by further developing the functional analytic framework associated to $\Lambda_{p(\cdot)}$, the results of \cite{goodrich2025luxemburgnormlocalisationnonlocal} can be transplanted into the concave setting.

\begin{figure}
\begin{minipage}[t]{0.495\linewidth}
\vspace{0pt}
\centering
% Push this one down if needed:
\vspace{3mm}
\includegraphics[width=\linewidth]{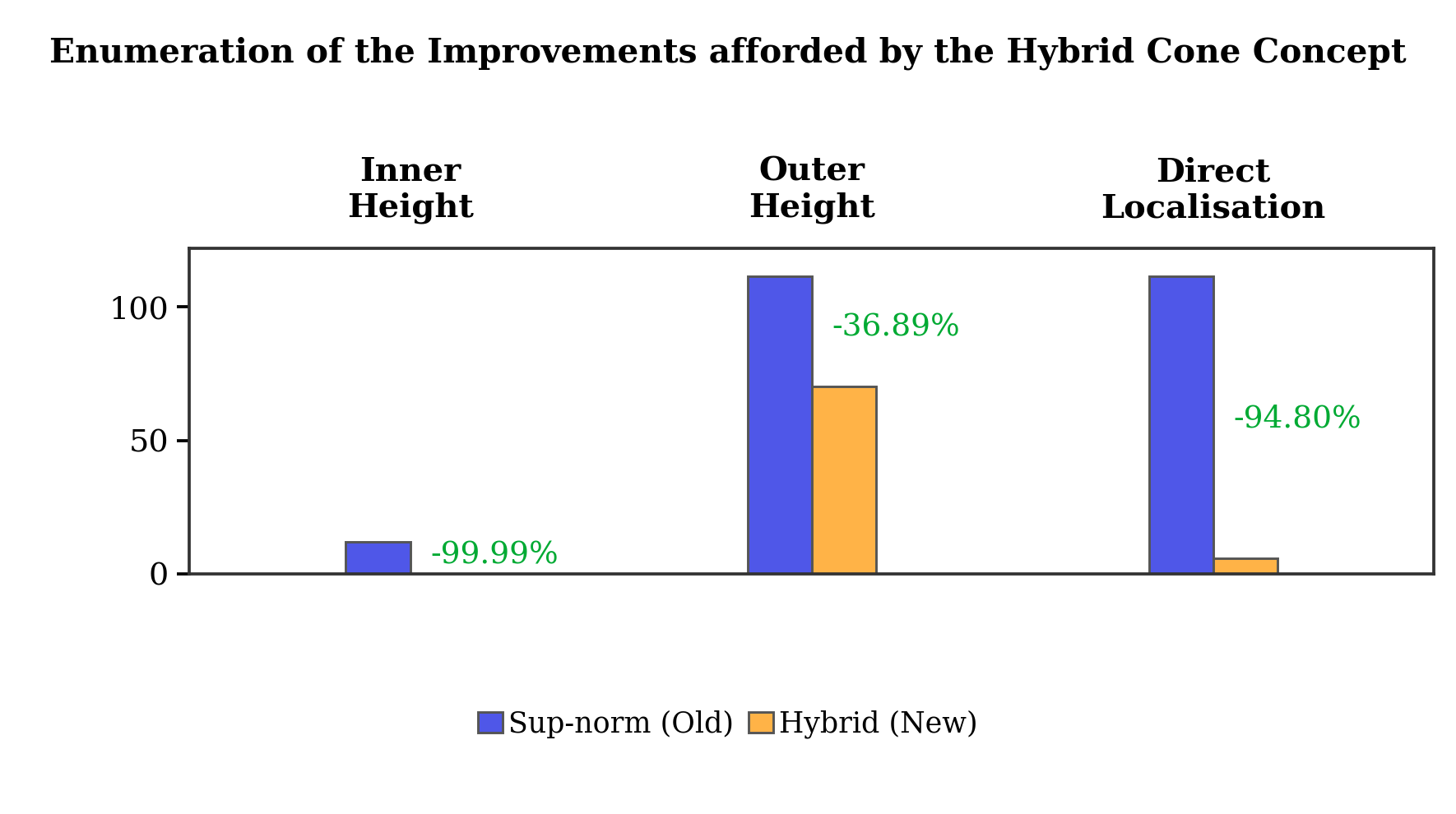}
\caption{A comparison of the outer/inner height and direct localisation in Example \ref{example2.12}.}
\label{fig1}
\end{minipage}
\hfill
\begin{minipage}[t]{0.495\linewidth}
\vspace{0pt}
\centering
% Pull this one up if needed:
% \vspace{-2mm}
\includegraphics[width=\linewidth]{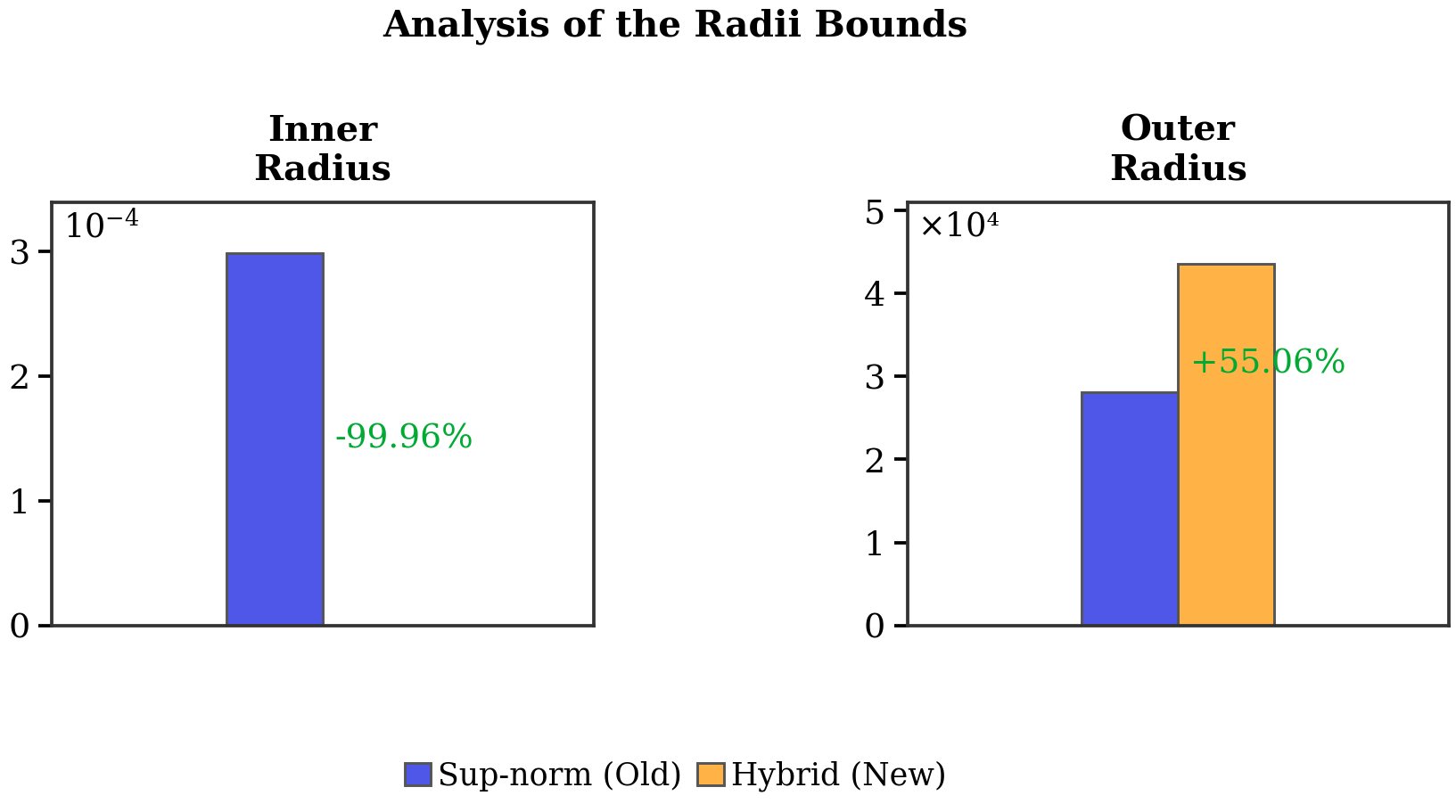}
\vspace{-3.85mm}
\caption{A comparison of the outer/inner radius in Example \ref{example2.12}.  For the outer radius, an \emph{increase} is, in fact, favourable.}
\label{fig2}
\end{minipage}
\end{figure}

We would like to further point out that analysing \eqref{eq1.1} through the lens of the Luxemburg functional provides substantial improvements, not merely in the localisation of solutions but also, perhaps surprisingly, in the conditions required to deduce abstractly existence of solution in the first place.  For example, in Example~\ref{example2.12}, which is a concave modification of \cite[Example~2.13]{goodrich2025unified}, the older concave framework gives existence under the conditions \eqref{eq:old-inner}--\eqref{eq:old-outer}, together with the supremum-norm localisation
\[
  \|u_0\|_{\infty} \le \overline{M}_{\rho_2} \approx 111.4305,
\]
whereas the present Luxemburg framework gives existence under the appreciably sharper conditions \eqref{eq:new-inner}--\eqref{eq:new-outer}, together with the Luxemburg-gauge localisation
\[
\Lambda_{p(\cdot)}\left(u_0\right) \le 3^{8/5} \approx 5.7995.
\]
Figures~\ref{fig1}--\ref{fig2} visually demonstrate the degree of improvement afforded by the present theory. In the example considered, the \textquotedblleft inner rectangle height\textquotedblright\ is reduced by approximately $99.99\%$, the \textquotedblleft outer rectangle height\textquotedblright\ by approximately $36.89\%$, and the direct localisation by approximately $94.80\%$, corresponding to a localisation about $19\times$ tighter. Moreover, the \textquotedblleft inner radius threshold\textquotedblright\ is reduced by approximately $99.96\%$, roughly a $2318\times$ improvement, while the \textquotedblleft outer radius\textquotedblright\ allowance increases by approximately $55.06\%$ -- see Example \ref{example2.12} for a precise description of the terminology \textquotedblleft outer rectangle height\textquotedblright\ and so forth.

In summary, the main theme of the paper is that, even in the strictly concave regime, one can develop a localisation theory that passes through variable-exponent modular control before returning to the supremum norm. A further point is that, although the concave regime does not retain the full Banach-norm geometry of the convex case, enough modular comparison survives to support a useful localisation theory. In particular, the argument developed here combines reverse H\"older-type estimates, lower bounds coming from a thickness condition to be defined below, and quasi-norm control of the intermediate $L^{p(\cdot)/q}$-quantity. This produces a conceptually cleaner upper-bound mechanism and, in at least some settings, a sharper bound than that obtained by the older direct supremum-based approach.

We conclude, as promised at the beginning of this section, by discussing some of the existing literature on nonlocal differential equations.  As was suggested in the opening paragraph of this section, there is a vast literature on nonlocal differential equations.  Often these take either the form
\begin{equation}\label{eq1.2}
-A\left(\Vert u\Vert_{L^p}^{p}\right)u''(t)=\lambda f\big(t,u(t)\big)\text{, }0<t<1
\end{equation}
or the form
\begin{equation}\label{eq1.3}
-A\left(\Vert u'\Vert_{L^p}^{p}\right)u''(t)=\lambda f\big(t,u(t)\big)\text{, }0<t<1
\end{equation}
together with the obvious elliptic PDE equivalents.  One-dimensional nonlocal equations exhibiting variable exponents are much less common, and we are only aware of those by the authors \cite{goodrich_px-growth_2025,goodrich27,goodrich2025unified,goodrich2025luxemburgnormlocalisationnonlocal} and Garc\'{i}a-Huidobro, et al. \cite{garcia1,garcia2}.

In nonlocal problems, it is very common to assume that the nonlocal coefficient, i.e., the function $A$ in \eqref{eq1.2}--\eqref{eq1.3}, satisfies a uniform positivity condition of the form $A(t)>0$ for all $t\ge0$.  Indeed, such an assumption clearly avoids any degeneracy in the equation.  Examples of this type of assumption may be found in Alves and Covei \cite{alves_existence_2015}, Azzouz and Bensedik \cite{azzouz_existence_2012}, Bellamouchi and Zaouche \cite{bellamouchi1}, Biagi, et al. \cite{biagi1,biagi2}, Boulaaras and Guefaifia \cite{boulaaras1}, Calamai and Infante \cite{calamai0,calamai1}, Corr\^{e}a, et al. \cite{correa1,correa2}, do \'{O}, et al. \cite{do4}, Graef and Heidarkhani \cite{graef2}, Infante \cite{infante0,infante2,infante99}, Li, et al. \cite{li1}, Sta\'{n}czy \cite{stanczy1}, and Wang, et al. \cite{wang1}.  Furthermore, this positivity condition is sometimes further adjoined to an assumption of monotonicity -- see Yan and Ma \cite{yanma1} and Yan and Wang \cite{yan1}.  Very occasionally, one finds a relaxation of these conditions such as in the work of Ambrosetti and Arcoya \cite{ambrosetti1}, who assume a vanishing condition at $+\infty$.  Similarly, Delgado, et al. \cite{delgado_non-local_2020} allow for the possibility that the nonlocal coefficient vanishes at a specified point.  And Santos J\'{u}nior and Siciliano \cite{junior_positive_2018-1}, in the case of specifically an $L^2$-norm nonlocal element, assume, in part, only that the coefficient is positive on a neighbourhood of zero.

In 2021 the first author \cite{goodrich_topological_2021-1} introduced a new methodology for treating problems such as \eqref{eq1.2}--\eqref{eq1.3}.  This new methodology employs carefully tailored positive order cones in order to avoid the assumptions mentioned in the previous paragraph.  This methodology subsequently has been extended by Castillo, et al. \cite{castillo1}, Chen and Goodrich \cite{chengoodrich1}, Chu and Hao \cite{chu_positive_2025}, Goodrich, et al. \cite{goodrich_topological_2021-2,goodrich20,goodrich_application_2024-1,goodrich_existence_2022}, Gu and Hao \cite{gu1}, Hao and Gu \cite{haogu1}, Hao and Wang \cite{hao1}, Shibata \cite{shibata3}, and Song and Hao \cite{song1}.  In particular, the methodology introduced in \cite{goodrich_topological_2021-1} does not require that the nonlocal coefficient satisfy any fixed positivity, monotonicity, or asymptotic conditions.  Other than continuity, essentially all that is required is that there be some interval, possibly very small in measure and freely localisable, on which the coefficient is positive.

Thus, in this paper we show that by further developing the notion of the Luxemburg functional, we can extend the good aspects of \cite{goodrich_topological_2021-1,goodrich2025luxemburgnormlocalisationnonlocal} to the concave variable growth setting as embodied by \eqref{eq1.1}.  Moreover, since we are not aware of any general theory for nonlocal differential equations with variable growth in the \emph{concave} regime, it is hoped that these results will spur on additional research in this relatively new area.

\section{Preliminaries and the Hybrid Cone Framework}

\label{sec:preliminaries}

\subsection{General Assumptions}
We work in the Banach space $\mathscr C([0,1])$ under the supremum norm
$\|\cdot\|_\infty$, and adopt the notation and conventions of
\cite[\S2]{goodrich2025luxemburgnormlocalisationnonlocal}. In particular,
$\bm 1$ and $\bm 0$ denote the constant functions equal to $1$ and $0$;
$*$ denotes the finite convolution
\[
(u*v)(t):=\int_0^t u(t-s)v(s)\,ds,\qquad 0\le t\le1,\quad u,v\in L^1((0,1));
\]
and for continuous $h:[0,1]\times[0,+\infty)\to[0,+\infty)$ and reals
$0\le a<b\le1$ and $0\le c<d<+\infty$, we write $h^m_{[a,b]\times[c,d]}$ and
$h^M_{[a,b]\times[c,d]}$ for the minimum and maximum of $h$ over
$[a,b]\times[c,d]$. As in \cite{goodrich2025luxemburgnormlocalisationnonlocal}, solutions of \eqref{eq1.1} are subjected to boundary data at $t=0$ and $t=1$ via a
Green's function $G:[0,1]\times[0,1]\to[0,+\infty)$; our running example is the
Dirichlet kernel
\begin{equation}\label{eq:G_dirichlet}
G(t,s):=
\begin{cases}
t(1-s), & 0\le t\le s\le1,\\
s(1-t), & 0\le s\le t\le1,
\end{cases}
\end{equation}
though the right-focal kernel and other standard choices are allowed in our theory (see \cite{infante_existence_2014,goodrich_new_2018}).

We next state the hypotheses we assume throughout the remainder of this paper.  Note that hypotheses (H1) and (H3) also occur in
\cite[\S2]{goodrich2025luxemburgnormlocalisationnonlocal}.
\begin{list}{}{\setlength{\leftmargin}{.5in}\setlength{\rightmargin}{0in}}
\item[\textbf{H1:}]
The functions $A:[0,+\infty)\to\mathbb{R}$,
$f:[0,1]\times[0,+\infty)\to[0,+\infty)$, and $b:(0,1]\to[0,+\infty)$ satisfy:
\begin{enumerate}
   \item $A$ and $f$ are continuous.
   \item $b\in L^1((0,1])$ and $b(t)>0$ for almost every $t\in(0,1]$.
   \item There exist $0<\rho_1<\rho_2$ such that $A(t)>0$ for all
   $t\in[\rho_1,\rho_2]$.
\end{enumerate}
\item[\textbf{H2:}]
The exponent $p:[0,1]\to(0,1)$, which occurs in \eqref{eq1.1}, is continuous and satisfies
\[
0<p^-:=\min_{t\in[0,1]}p(t)\le p(t)\le p^+:=\max_{t\in[0,1]}p(t)<1.
\]
\item[\textbf{H3:}]
The Green's function $G:[0,1]\times[0,1]\to[0,+\infty)$ satisfies the following.
\begin{enumerate}
   \item Let $\displaystyle\mathscr{G}(s):=\max_{t\in[0,1]}G(t,s)$. Then
   $\mathscr{G}(s)>0$ for all $s\in(0,1)$, and there exist $0\le\alpha<\beta\le1$
   and $\eta_0=\eta_0(\alpha,\beta)\in(0,1]$ such that
   \[
   \min_{t\in[\alpha,\beta]}G(t,s)\ge\eta_0\mathscr{G}(s),\qquad s\in[0,1].
   \]
   \item Define $\displaystyle C_0:=\inf_{s\in(0,1)}\frac{1}{\mathscr{G}(s)}\int_0^1G(t,s)\,dt$.
   Then $0<C_0<1$.
\end{enumerate}
\end{list}

We next state a definition, to which we will return several times in the sequel.

\begin{definition}\label{def:thickness}
A function \(u\in\mathscr C([0,1])\) satisfies the \emph{thickness condition}
if \(u\ge0\) and
\begin{equation}\label{eq:thickness}
\min_{t\in[\alpha,\beta]}u(t)\ge \eta_0\|u\|_\infty.
\end{equation}
\end{definition}

%In the concave hybrid framework below, the coercivity constant appearing in the Luxemburg-type scaling condition will be denoted by $\widetilde C_0$. We do not fix its admissible size at this stage. Instead, its allowable range will be specified later, after the relevant concave comparison estimates have been established. Thus, $\widetilde C_0$ is not introduced as an additional arbitrary assumption, but as a structural constant determined by the estimates developed below.

We define the modular associated with $p$ by
\[
I_{p(\cdot)}(u)
:=
\int_0^1 |u(t)|^{p(t)}\,dt,
\]
and we now introduce a Luxemburg-type modular-induced scaling functional
associated with this modular. As suggested in Section 1, for a measurable function
$u:[0,1]\to\mathbb{R}$, define
\[
\Lambda_{p(\cdot)}(u)
:=
\inf\left\{
\delta>0:
\int_0^1
\left|\frac{u(t)}{\delta}\right|^{p(t)}\,dt
\le1
\right\}.
\]
This is the usual Luxemburg expression associated with the modular
$I_{p(\cdot)}$. However, in the present concave regime $0<p(t)<1$, we do not
regard $\Lambda_{p(\cdot)}$ as a norm in the Banach-space sense. On the ambient
space $\mathscr C([0,1])$ it is used only as a modular-induced scaling
functional. Later, after the relevant comparison and additivity estimates have
been established and the admissible class $\mathscr A_{\mathrm{hybrid}}$ in \eqref{Ahybrid} has
been promoted to the cone $\mathscr K_{\mathrm{hybrid}}$
(Theorem~\ref{thm:cone-promotion}), we show that the restriction of
$\Lambda_{p(\cdot)}$ to $\mathscr K_{\mathrm{hybrid}}$ is a cone-restricted
quasi-norm.

The following elementary facts link the modular and the modular-induced
scaling functional. They are the concave counterpart of
\cite[Lemma~2.1]{goodrich2025luxemburgnormlocalisationnonlocal} and, in the
norm case, are classical (see, e.g., \cite[Ch.~2]{diening_lebesgue_2011}). We
include proofs because those sources assume $p(\cdot)\ge 1$, whereas here
$0<p^-\le p^+<1$; the arguments use only $0<p^-\le p^+<+\infty$, together with
continuity and finite measure, and are otherwise unchanged. They will be used
repeatedly.

\begin{lemma}\label{lem:Luxemburg_basic_concave}
Assume \textnormal{(H2)}. Then $\Lambda_{p(\cdot)}(\bm{0})=0$. Moreover, if
$u\in\mathscr C([0,1])$ and $u\not\equiv\bm{0}$, then
\[
\Lambda_{p(\cdot)}(u)<+\infty
\qquad\text{and}\qquad
I_{p(\cdot)}\!\left(\frac{u}{\Lambda_{p(\cdot)}(u)}\right)=1.
\]
In addition, the map
\[
S(\delta):=
\int_0^1
\left|\frac{u(t)}{\delta}\right|^{p(t)}\,dt,
\qquad \delta>0,
\]
is finite, continuous, and strictly decreasing on $(0,+\infty)$, with
$S(\delta)\to+\infty$ as $\delta\downarrow0$ and $S(\delta)\to0$ as
$\delta\to+\infty$.
\end{lemma}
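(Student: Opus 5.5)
The plan is to analyse the one-variable function $S$ directly, and then read off the claims about $\Lambda_{p(\cdot)}$ from its properties. First, for $u=\bm{0}$ the integrand vanishes for every $\delta>0$. So every $\delta>0$ is admissible and the infimum is $0$, giving $\Lambda_{p(\cdot)}(\bm{0})=0$.

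Now fix $u\in\mathscr C([0,1])$ with $u\not\equiv\bm 0$ and write $S(\delta)=\int_0^1|u(t)|^{p(t)}\delta^{-p(t)}\,dt$.

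\textbf{Finiteness.} Since $|u|\le\|u\|_\infty$ and $p^-\le p(t)\le p^+$, the integrand is bounded by $\max\{1,\|u\|_\infty\}^{p^+}\max\{\delta^{-p^-},\delta^{-p^+}\}$. This bound is a constant, so $S(\delta)<+\infty$.

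\textbf{Continuity.} On any compact $[\delta_1,\delta_2]\subset(0,+\infty)$ the same bound is uniform in $\delta$. The integrand is also continuous in $\delta$ for each $t$. Dominated convergence on $[0,1]$, which has finite measure, then gives continuity of $S$.

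\textbf{Strict monotonicity.} For each $t$ with $u(t)\ne0$, the map $\delta\mapsto|u(t)|^{p(t)}\delta^{-p(t)}$ is strictly decreasing, because $p(t)\ge p^->0$. Since $u$ is continuous and not identically zero, the set $\{t:u(t)\ne0\}$ is open and nonempty, hence of positive measure. Therefore, for $\delta<\delta'$ the difference of the integrands is nonnegative everywhere and strictly positive on a set of positive measure, so $S(\delta)>S(\delta')$.

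\textbf{Limits.} For $\delta\ge1$ we have $\delta^{-p(t)}\le\delta^{-p^-}$, so $S(\delta)\le\delta^{-p^-}I_{p(\cdot)}(u)\to0$ as $\delta\to+\infty$. For $\delta\le1$ we have $\delta^{-p(t)}\ge\delta^{-p^-}$, so $S(\delta)\ge\delta^{-p^-}I_{p(\cdot)}(u)\to+\infty$ as $\delta\downarrow0$. This uses $I_{p(\cdot)}(u)>0$, which again holds because $\{u\ne0\}$ has positive measure.

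\textbf{Conclusion for $\Lambda_{p(\cdot)}$.} By the intermediate value theorem and strict monotonicity there is a unique $\delta^*>0$ with $S(\delta^*)=1$. The admissible set $\{\delta:S(\delta)\le1\}$ is then exactly $[\delta^*,+\infty)$. Hence $\Lambda_{p(\cdot)}(u)=\delta^*<+\infty$ and $I_{p(\cdot)}(u/\Lambda_{p(\cdot)}(u))=S(\delta^*)=1$.

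The only step requiring care is the strictness and positivity claims. These rest on $\{u\ne0\}$ having positive measure, which continuity provides. No convexity of $t\mapsto t^{p}$ is used anywhere, which is why the concave range $0<p<1$ poses no difficulty.
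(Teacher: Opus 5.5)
Your proof is correct and follows the same route as the paper: you establish finiteness, continuity via dominated convergence, strict monotonicity via the positive measure of $\{u\ne0\}$, and the limiting behaviour of $S$, then conclude by the intermediate value theorem. The only cosmetic difference is in the limits: you factor out $\delta^{-p^-}$ against $I_{p(\cdot)}(u)=S(1)$, whereas the paper uses dominated convergence as $\delta\to+\infty$ and a level set $E$ on which $|u|\ge c$ as $\delta\downarrow0$.
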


\begin{proof}
For $u\equiv\bm{0}$, the admissible set in the definition of
$\Lambda_{p(\cdot)}$ is all of $(0,+\infty)$.  Therefore, it follows that $\Lambda_{p(\cdot)}(\bm{0})=0$.

Now assume $u\not\equiv\bm{0}$ and set $M:=\|u\|_\infty>0$. Since $\displaystyle\left|\frac{u(t)}{M}\right|\le1$ and $p(t)>0$, we have $\displaystyle\left|\frac{u(t)}{M}\right|^{p(t)}\le1$, and so, $S(M)\le1$. Therefore
$M$ is admissible, and so,
\[
\Lambda_{p(\cdot)}(u)\le M<+\infty.
\]
Fix $\delta_0:=M$. For any $\delta>0$,
\[
\left|\frac{u(t)}{\delta}\right|^{p(t)}
=
\left(\frac{\delta_0}{\delta}\right)^{p(t)}
\left|\frac{u(t)}{\delta_0}\right|^{p(t)}
\le
\max\left\{
\left(\frac{\delta_0}{\delta}\right)^{p^-},
\left(\frac{\delta_0}{\delta}\right)^{p^+}
\right\}
\left|\frac{u(t)}{\delta_0}\right|^{p(t)},
\]
where the right-hand side is integrable. Therefore, $S(\delta)<+\infty$ for every
$\delta>0$.

To show the continuity of $S$, let $\big\{\delta_n\big\}_{n=1}^{\infty}\subset(0,+\infty)$ and suppose that $\delta_n\to\delta$ with
$\delta>0$. For all sufficiently large $n$, there exists $a>0$ such that
$\delta_n\ge a$. Hence,
\[
\left|\frac{u(t)}{\delta_n}\right|^{p(t)}
\le
\left|\frac{u(t)}{a}\right|^{p(t)}
=
\left(\frac{\delta_0}{a}\right)^{p(t)}
\left|\frac{u(t)}{\delta_0}\right|^{p(t)}
\le
\max\left\{
\left(\frac{\delta_0}{a}\right)^{p^-},
\left(\frac{\delta_0}{a}\right)^{p^+}
\right\}
\left|\frac{u(t)}{\delta_0}\right|^{p(t)},
\]
which is integrable. The pointwise convergence
$\displaystyle\left|\frac{u(t)}{\delta_n}\right|^{p(t)}\to\left|\frac{u(t)}{\delta}\right|^{p(t)}$ together with dominated convergence yields $S(\delta_n)\to S(\delta)$, as desired.

We next show that $S$ is strictly decreasing.  So, for $0<\delta_1<\delta_2$ and every $t\in[0,1]$,
\[
\left|\frac{u(t)}{\delta_2}\right|^{p(t)}
\le
\left|\frac{u(t)}{\delta_1}\right|^{p(t)},
\]
with strict inequality whenever $u(t)\ne0$. Since $u$ is continuous and
$u\not\equiv\bm{0}$, the set $\{t:u(t)\ne0\}$ has positive measure, and so, integration
gives $S(\delta_2)<S(\delta_1)$, which proves the desired claim.

On the other hand, by continuity and the fact that $u\not\equiv\bm{0}$, there exist $c>0$ and a set
$E\subset[0,1]$ with $|E|>0$ such that $\big|u(t)\big|\ge c$ on $E$. For $0<\delta<c$
and $t\in E$, it follows that $\displaystyle\left|\frac{u(t)}{\delta}\right|^{p(t)}\ge\left(\frac{c}{\delta}\right)^{p^-}$, and so,
\[
S(\delta)\ge |E|\left(\frac{c}{\delta}\right)^{p^-}\to+\infty
\qquad\text{as }\delta\downarrow0.
\]
For $\delta\ge\delta_0$, where $\delta_0$ is defined exactly as earlier, we have that $\displaystyle\left|\frac{u(t)}{\delta}\right|^{p(t)}\le\left|\frac{u(t)}{\delta_0}\right|^{p(t)}$, which is integrable and tends to $0$ pointwise as $\delta\to+\infty$. The dominated convergence theorem then implies that $S(\delta)\to0$.

In summary, since $S$ is continuous, strictly decreasing, and has range $(0,+\infty)$, there exists a unique $\delta_*>0$ with $S(\delta_*)=1$. Strict
monotonicity then gives
\[
S(\delta)\le1
\quad\Longleftrightarrow\quad
\delta\ge\delta_*,
\]
and so,
\[
\Lambda_{p(\cdot)}(u)
=
\inf\{\delta>0:S(\delta)\le1\}
=
\delta_*,
\]
from which it follows that
\[
I_{p(\cdot)}\!\left(\frac{u}{\Lambda_{p(\cdot)}(u)}\right)
=
S\bigl(\Lambda_{p(\cdot)}(u)\bigr)
=1,
\]
which proves the remaining claim in the statement of the lemma.  And this completes the proof.
\end{proof}

\begin{corollary}\label{cor:mod_scaling_concave}
Assume \textnormal{(H2)} and let $u\in\mathscr C([0,1])$.
\begin{enumerate}
\item If $\Lambda_{p(\cdot)}(u)\ge1$, then
\[
\Lambda_{p(\cdot)}(u)^{p^-}
\le I_{p(\cdot)}(u)
\le \Lambda_{p(\cdot)}(u)^{p^+}.
\]
\item If $0<\Lambda_{p(\cdot)}(u)\le1$, then
\[
\Lambda_{p(\cdot)}(u)^{p^+}
\le I_{p(\cdot)}(u)
\le \Lambda_{p(\cdot)}(u)^{p^-}.
\]
\end{enumerate}
\end{corollary}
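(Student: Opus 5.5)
The plan is to reduce both parts to Lemma~\ref{lem:Luxemburg_basic_concave} by a single scaling identity. Since $\Lambda_{p(\cdot)}(u)>0$ in either case, we have $u\not\equiv\bm 0$ (because $\Lambda_{p(\cdot)}(\bm 0)=0$). Set $\Lambda:=\Lambda_{p(\cdot)}(u)$. By Lemma~\ref{lem:Luxemburg_basic_concave}, $\Lambda<+\infty$ and
\[
\int_0^1\left|\frac{u(t)}{\Lambda}\right|^{p(t)}\,dt=1 .
\]
For every $t\in[0,1]$ we can write $|u(t)|^{p(t)}=\Lambda^{p(t)}\left|u(t)/\Lambda\right|^{p(t)}$. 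Integrating, $I_{p(\cdot)}(u)$ is the integral of $\Lambda^{p(t)}$ against the nonnegative weight $w(t):=\left|u(t)/\Lambda\right|^{p(t)}$, and this weight has total mass $1$.

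The remaining step is pointwise bounds on $\Lambda^{p(t)}$ using (H2), namely $p^-\le p(t)\le p^+$.

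\textbf{Case (1):} $\Lambda\ge1$. The map $s\mapsto\Lambda^{s}$ is nondecreasing, so $\Lambda^{p^-}\le\Lambda^{p(t)}\le\Lambda^{p^+}$ for every $t$.

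\textbf{Case (2):} $0<\Lambda\le1$. Now $s\mapsto\Lambda^s$ is nonincreasing, so the inequalities reverse: $\Lambda^{p^+}\le\Lambda^{p(t)}\le\Lambda^{p^-}$.

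In each case we multiply the pointwise bound by $w(t)\ge0$ and integrate over $[0,1]$. Since $\int_0^1 w=1$, the constants $\Lambda^{p^\pm}$ pass through unchanged, which gives exactly the stated two-sided inequalities.

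There is no real obstacle here. The only delicate point is the normalisation $I_{p(\cdot)}(u/\Lambda)=1$ with equality rather than merely $\le1$, and that is supplied by Lemma~\ref{lem:Luxemburg_basic_concave} through the strict monotonicity and continuity of $S$. Concavity ($p^+<1$) plays no role; only $0<p^-\le p^+<\infty$ is used.
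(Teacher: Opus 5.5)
Your proposal is correct and follows the paper's proof essentially verbatim. Like the paper, you set $\delta=\Lambda_{p(\cdot)}(u)>0$, invoke the normalisation identity of Lemma~\ref{lem:Luxemburg_basic_concave}, bound $\delta^{p(t)}$ pointwise by $\delta^{p^-}$ and $\delta^{p^+}$ according to whether $\delta\ge1$ or $\delta\le1$, and integrate; your observation that only $0<p^-\le p^+<\infty$ is used matches the paper's Remark~\ref{rem:exponent-generic}.
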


\begin{proof}
This is the two-sided concave analogue of
\cite[Corollaries~2.2 and~2.3]{goodrich2025luxemburgnormlocalisationnonlocal},
which in the convex regime were quoted from \cite{diening_lebesgue_2011}; the
proof uses only the ordering $p^-\le p(t)\le p^+$ together with the
normalisation identity of Lemma~\ref{lem:Luxemburg_basic_concave}.
Set $\delta:=\Lambda_{p(\cdot)}(u)$. In each of cases (1) and (2), we have that $\delta>0$, and so, $u\not\equiv\bm{0}$ (since $\Lambda_{p(\cdot)}(\bm{0})=0$ by
Lemma~\ref{lem:Luxemburg_basic_concave}).  Lemma~\ref{lem:Luxemburg_basic_concave} also gives the normalisation identity
\[
\int_0^1\left|\frac{u(t)}{\delta}\right|^{p(t)}\,dt=1,
\]
and for each $t\in[0,1]$ we have that
\[
|u(t)|^{p(t)}
=
\delta^{p(t)}
\left|\frac{u(t)}{\delta}\right|^{p(t)}.
\]
\emph{Case (i): $\delta\ge1$.}
Since $p^-\le p(t)\le p^+$, we have
\(\delta^{p^-}\le\delta^{p(t)}\le\delta^{p^+}\). Hence, for every $t\in[0,1]$,
\[
\delta^{p^-}\left|\frac{u(t)}{\delta}\right|^{p(t)}
\le |u(t)|^{p(t)}
\le \delta^{p^+}\left|\frac{u(t)}{\delta}\right|^{p(t)}.
\]
Integrating over $[0,1]$ and using the normalisation identity gives
\[
\delta^{p^-}\le I_{p(\cdot)}(u)\le \delta^{p^+}.
\]
\emph{Case (ii): $0<\delta\le1$.}
The exponent inequality reverses:
\(\delta^{p^+}\le\delta^{p(t)}\le\delta^{p^-}\). Hence,
\[
\delta^{p^+}\left|\frac{u(t)}{\delta}\right|^{p(t)}
\le |u(t)|^{p(t)}
\le \delta^{p^-}\left|\frac{u(t)}{\delta}\right|^{p(t)},
\]
and integrating gives
\[
\delta^{p^+}\le I_{p(\cdot)}(u)\le \delta^{p^-}.
\]
Substituting $\delta=\Lambda_{p(\cdot)}(u)$ in each case yields the stated
bounds.
\end{proof}

\begin{remark}
At the borderline $\Lambda_{p(\cdot)}(u)=1$, both cases of
Corollary~\ref{cor:mod_scaling_concave} reduce to $I_{p(\cdot)}(u)=1$, in
agreement with the normalisation identity in
Lemma~\ref{lem:Luxemburg_basic_concave}.
\end{remark}

\begin{lemma}\label{lem:gauge-le-sup}
Assume \textnormal{(H2)}. For every \(w\in\mathscr C([0,1])\) with \(w\ge0\), we have that
\[
\Lambda_{p(\cdot)}(w)\le \|w\|_\infty.
\]
\end{lemma}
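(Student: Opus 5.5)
The plan is to show that $\delta=\|w\|_\infty$ is admissible in the infimum defining $\Lambda_{p(\cdot)}(w)$. The infimum is then at most $\|w\|_\infty$ directly from the definition, with no further structure needed. This is exactly the admissibility argument already used in the proof of Lemma~\ref{lem:Luxemburg_basic_concave}, isolated as a standalone statement.

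First I dispose of the trivial case. If $w\equiv\bm{0}$, then Lemma~\ref{lem:Luxemburg_basic_concave} gives $\Lambda_{p(\cdot)}(\bm{0})=0=\|w\|_\infty$, so the inequality holds with equality.

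Next suppose $w\not\equiv\bm{0}$ and set $M:=\|w\|_\infty>0$. For every $t\in[0,1]$ we have $0\le w(t)/M\le1$. Since $p(t)>0$ by (H2), raising a number in $[0,1]$ to a positive power keeps it in $[0,1]$, so
\[
\left|\frac{w(t)}{M}\right|^{p(t)}\le1 \qquad\text{for all } t\in[0,1].
\]
Integrating over $[0,1]$, which has measure one, gives $S(M)\le1$. Hence $M$ belongs to the admissible set, and therefore $\Lambda_{p(\cdot)}(w)\le M=\|w\|_\infty$.

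There is no genuine obstacle here. The only point to check is that the bound $x^{p(t)}\le1$ for $x\in[0,1]$ requires only $p(t)>0$ and not $p(t)\ge1$, so the concave regime causes no difficulty. The hypothesis $w\ge0$ is not actually needed, since absolute values appear in the definition, but it is harmless.
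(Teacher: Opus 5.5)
Your proposal is correct and matches the paper's proof: you handle $w\equiv\bm{0}$ separately, then show that $\delta=\|w\|_\infty$ is admissible because $|w(t)/\|w\|_\infty|^{p(t)}\le1$ whenever $p(t)>0$ and $[0,1]$ has measure one. Your remark that the hypothesis $w\ge0$ is not actually needed is also accurate.
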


\begin{proof}
This is the modular-gauge counterpart of the embedding
\cite[Lemma~2.6]{goodrich2025luxemburgnormlocalisationnonlocal}; here only the
one-sided bound for nonnegative $w$ is required, for which the short argument
below suffices.
If \(w\equiv\bm{0}\), then this is immediate. Otherwise, $\displaystyle\left|\frac{w(t)}{\Vert w\Vert_{\infty}}\right|\le1$ for every \(t\in[0,1]\), and since \(p(t)>0\), it follows that
\[
\int_0^1
\left|\frac{w(t)}{\|w\|_\infty}\right|^{p(t)}\,dt
\le1.
\]
Hence, \(\|w\|_\infty\) is admissible in the definition of
\(\Lambda_{p(\cdot)}(w)\), and so, \(\Lambda_{p(\cdot)}(w)\le\|w\|_\infty\).
\end{proof}

\begin{remark}\label{rem:exponent-generic}
Lemma~\ref{lem:Luxemburg_basic_concave},
Corollary~\ref{cor:mod_scaling_concave}, and Lemma~\ref{lem:gauge-le-sup} are
stated under \textnormal{(H2)}, but their proofs use only the bounds
$0<p^-\le p(t)\le p^+<+\infty$. Consequently, each result holds verbatim with
$p(\cdot)$ replaced by any continuous exponent $s:[0,1]\to(0,+\infty)$, with
$p^\pm$ replaced by $s^-:=\min_{t\in[0,1]}s(t)$ and
$s^+:=\max_{t\in[0,1]}s(t)$. We refer to this as the \emph{exponent-generic
form} of these results. In particular, for every $q>1$ they apply to the
divided exponent $s(\cdot)=p(\cdot)/q$ and to the multiplied exponent
$s(\cdot)=q\,p(\cdot)$, both of which occur in the sequel.
\end{remark}

\subsection{The Hybrid Functional Analytic Framework}
We now introduce the admissible hybrid framework used throughout the paper.
Since the Luxemburg-type modular-induced scaling functional
$\Lambda_{p(\cdot)}$ has not yet been shown to possess the quasi-norm
properties required later, we first define an admissible class rather than
immediately referring to it as a cone.
Let $\widetilde C_0:=C_0\eta_0(\beta-\alpha)^{\sfrac{1}{p^-}}>0$ be the concave hybrid coercivity constant of
Theorem~\ref{thm:coercivity-calibration}. Define
\begin{equation}\label{Ahybrid}
\mathscr A_{\mathrm{hybrid}}
:=
\left\{
u\in\mathscr C([0,1]) :
\begin{aligned}
&u\ge0,\\
&\min_{t\in[\alpha,\beta]}u(t)\ge \eta_0\|u\|_\infty,\\
&(\bm 1*u)(1)\ge \widetilde C_0\,\Lambda_{p(\cdot)}(u)
\end{aligned}
\right\}.
\end{equation}
Thus, $\mathscr A_{\mathrm{hybrid}}$ consists of nonnegative continuous
functions satisfying both \eqref{eq:thickness}, together with the Luxemburg-type
scaling coercivity condition $(\bm 1*u)(1)\ge \widetilde C_0\,\Lambda_{p(\cdot)}(u)$. After the quasi-Luxemburg norm structure has
been established, this admissible class will be identified with the modified
hybrid cone and denoted by $\mathscr K_{\mathrm{hybrid}}$.

For $\rho>0$, define
\[
\widehat V_\rho
:=
\left\{
u\in\mathscr A_{\mathrm{hybrid}} :
\left(b*u^{p(\cdot)}\right)(1)<\rho
\right\}.
\]
Equivalently, if
\[
\Phi(u):=
\left(b*u^{p(\cdot)}\right)(1)
=
\int_0^1 b(1-s)\bigl(u(s)\bigr)^{p(s)}\,ds,
\]
then
\[
\widehat V_\rho
=
\left\{
u\in\mathscr A_{\mathrm{hybrid}} :
\Phi(u)<\rho
\right\}.
\]
At this preliminary stage, $\widehat V_\rho$ is understood as a relatively
open subset of the admissible hybrid class $\mathscr A_{\mathrm{hybrid}}$.
After the cone structure has been established and
$\mathscr K_{\mathrm{hybrid}}$ is identified with
$\mathscr A_{\mathrm{hybrid}}$, the same set will be regarded as a relatively
open subset of the modified hybrid cone.

We first record the continuity of $\Phi$ in the topology induced by
$\|\cdot\|_\infty$ on $\mathscr C([0,1])$.  This is the content of Lemma \ref{lem:Phi-continuity-concave}.
\begin{lemma}\label{lem:Phi-continuity-concave}
Assume \textnormal{(H1)} and \textnormal{(H2)}. The map
$\Phi(u):=\left(b*u^{p(\cdot)}\right)(1)$ is continuous on
$\mathscr C([0,1])$ in the $\|\cdot\|_\infty$-topology; in particular, if
$u_n,u\ge0$ and $\|u_n-u\|_\infty\to0$, then $\Phi(u_n)\to\Phi(u)$.
\end{lemma}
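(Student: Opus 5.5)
The plan is to reduce continuity of $\Phi$ to the uniform continuity of the scalar map $(s,x)\mapsto |x|^{p(s)}$ on a compact set, and then integrate against the $L^1$ weight $b(1-\cdot)$. Since $u^{p(\cdot)}$ only makes sense for $u\ge0$ (or is read as $|u|^{p(\cdot)}$ in general), I will work with $|u(s)|^{p(s)}$ throughout; this covers the stated case $u_n,u\ge0$ verbatim.

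Fix $u\in\mathscr C([0,1])$ and a sequence $u_n\to u$ in $\|\cdot\|_\infty$. Then there is $R>0$ with $\|u_n\|_\infty,\|u\|_\infty\le R$ for all $n$. Consider
\[
g(s,x):=|x|^{p(s)}\quad\text{on }K:=[0,1]\times[-R,R].
\]
By (H2), $p$ is continuous with $p(s)\ge p^->0$, so $g$ is continuous on $K$. This includes the points with $x=0$: since $|x|^{p(s)}\le |x|^{p^-}$ for $|x|\le 1$, we get $g(s,x)\to0$ uniformly in $s$ as $x\to0$. As $K$ is compact, $g$ is uniformly continuous there.

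Given $\varepsilon>0$, choose $\eta>0$ such that $|x-y|<\eta$ implies $|g(s,x)-g(s,y)|<\varepsilon$ for all $s\in[0,1]$. For $n$ large, $\|u_n-u\|_\infty<\eta$, and therefore
\[
\bigl|\Phi(u_n)-\Phi(u)\bigr|\le\int_0^1 b(1-s)\,\bigl|g(s,u_n(s))-g(s,u(s))\bigr|\,ds\le \varepsilon\,\|b\|_{L^1}.
\]
Here we use (H1)(2): $b\ge0$ a.e. and $b\in L^1((0,1])$, so $\int_0^1 b(1-s)\,ds=\|b\|_{L^1}<\infty$. This gives $\Phi(u_n)\to\Phi(u)$.

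The same bound shows that $\Phi(u)$ is finite, since $|\Phi(u)|\le \max\{1,R\}\,\|b\|_{L^1}$. As a fallback, dominated convergence with the dominating function $\max\{1,R\}\,b(1-s)$ would give the same conclusion.

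The only point needing care is the continuity of $g$ at $x=0$, where the map $x\mapsto |x|^{p}$ fails to be Lipschitz because $p<1$. This is handled by the lower bound $p^->0$ as above. No Lipschitz estimate is needed, only uniform continuity, so the concave regime causes no real obstruction.
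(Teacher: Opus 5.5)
Your proof is correct, but your main argument differs from the paper's. The paper uses exactly your ``fallback'': it bounds $0\le u_n^{p(s)}\le\max\{1,M^{p^+}\}$ and applies dominated convergence with the majorant $b(1-s)\max\{1,M^{p^+}\}$. You instead avoid convergence theorems altogether, using uniform continuity of $g(s,x)=|x|^{p(s)}$ on the compact set $[0,1]\times[-R,R]$ and integrating a uniform $\varepsilon$-bound against $b(1-\cdot)$.

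The paper's route is shorter and works under mere bounded pointwise convergence of $u_n$. Its pointwise-limit step tacitly relies on continuity of $x\mapsto x^{p(s)}$ at $x=0$, which you check explicitly via $p^->0$.

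Your route gives a quantitative conclusion. Since $\eta$ depends only on $R$, you get $|\Phi(u)-\Phi(v)|\le\varepsilon\|b\|_{L^1}$ whenever $\|u\|_\infty,\|v\|_\infty\le R$ and $\|u-v\|_\infty<\eta$. So $\Phi$ is uniformly continuous on bounded sets, not merely sequentially continuous.

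The concave regime actually helps here, rather than being merely harmless. Subadditivity of $t\mapsto t^{p(s)}$ on $[0,+\infty)$ gives $\bigl||x|^{p(s)}-|y|^{p(s)}\bigr|\le|x-y|^{p(s)}$. Hence $|\Phi(u)-\Phi(v)|\le\|b\|_{L^1}\max\{\|u-v\|_\infty^{p^-},\|u-v\|_\infty^{p^+}\}$ for all $u,v$, a global H\"older-type modulus obtained with no compactness at all.

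Finally, reading $u^{p(\cdot)}$ as $|u|^{p(\cdot)}$ is a sensible way to make ``continuous on $\mathscr C([0,1])$'' meaningful. The paper's proof, written for $u_n\ge0$, leaves this point implicit.
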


\begin{proof}
Uniform convergence bounds the sequence $\big\{u_n\big\}_{n=1}^{\infty}$ by some $M$, whence $0\le u_n^{p(s)}\le\max\{1,M^{p^+}\}$, and the dominating function
$b(1-s)\max\{1,M^{p^+}\}$ lies in $L^1((0,1])$; dominated convergence gives the
claim. The exponent range is used only through the bound $M^{p^+}$, and so, the
argument is unaffected by \textnormal{(H2)}.
\end{proof}

By Lemma~\ref{lem:Phi-continuity-concave}, the set
\[
\widehat V_\rho
=
\mathscr A_{\mathrm{hybrid}}\cap\{u:\Phi(u)<\rho\}
\]
is relatively open in $\mathscr A_{\mathrm{hybrid}}$, where
$\mathscr A_{\mathrm{hybrid}}$ is equipped with the subspace topology inherited
from $\mathscr C([0,1])$.
We next record the elementary scaling property needed to identify the
relative boundary of $\widehat V_\rho$.

\begin{lemma}\label{lem:positive-scaling-admissible}
Let $u\in\mathscr A_{\mathrm{hybrid}}$ and let $\lambda\ge0$. Then
\[
\lambda u\in\mathscr A_{\mathrm{hybrid}}.
\]
Moreover, if $\Phi(u)>0$, then
\[
0<\lambda<1
\quad\Longrightarrow\quad
\Phi(\lambda u)<\Phi(u),
\]
and
\[
\lambda>1
\quad\Longrightarrow\quad
\Phi(\lambda u)>\Phi(u).
\]
\end{lemma}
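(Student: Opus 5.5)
The plan is to verify the three defining conditions of $\mathscr A_{\mathrm{hybrid}}$ for $\lambda u$ one at a time, and then to obtain the strict monotonicity of $\Phi$ along rays directly from the integrand.

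\textbf{Admissibility of $\lambda u$.} The case $\lambda=0$ is immediate. We have $\lambda u=\bm 0$, and $\bm 0\ge0$ and $\min_{[\alpha,\beta]}\bm 0=0=\eta_0\|\bm 0\|_\infty$. Moreover, $(\bm 1*\bm 0)(1)=0=\widetilde C_0\Lambda_{p(\cdot)}(\bm 0)$, because $\Lambda_{p(\cdot)}(\bm 0)=0$ by Lemma~\ref{lem:Luxemburg_basic_concave}.

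Now take $\lambda>0$. Nonnegativity and the thickness condition \eqref{eq:thickness} are positively homogeneous, since $\min_{[\alpha,\beta]}\lambda u=\lambda\min_{[\alpha,\beta]}u$ and $\|\lambda u\|_\infty=\lambda\|u\|_\infty$. The left side $(\bm 1*\lambda u)(1)=\lambda(\bm 1*u)(1)$ is linear. So the only point is the positive homogeneity of the gauge,
\[
\Lambda_{p(\cdot)}(\lambda u)=\lambda\,\Lambda_{p(\cdot)}(u).
\]
This follows straight from the definition. We have $\int_0^1|\lambda u/\delta|^{p(t)}\,dt\le1$ if and only if $\int_0^1|u/(\delta/\lambda)|^{p(t)}\,dt\le1$. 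Hence the admissible sets satisfy $\{\delta:\dots\}_{\lambda u}=\lambda\{\delta':\dots\}_u$, and taking infima gives the identity. Note that homogeneity holds even though the modular $I_{p(\cdot)}$ itself is not homogeneous, because the exponent is applied to $u/\delta$. Multiplying the coercivity inequality for $u$ by $\lambda$ then gives it for $\lambda u$.

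\textbf{Monotonicity of $\Phi$.} For each $s$ with $u(s)>0$, the map $\lambda\mapsto\lambda^{p(s)}u(s)^{p(s)}$ is strictly increasing, since $p(s)>0$. So for $0<\lambda<1$ we get $b(1-s)(\lambda u(s))^{p(s)}\le b(1-s)u(s)^{p(s)}$ pointwise. Equality holds only where $b(1-s)u(s)^{p(s)}=0$, and the inequality is reversed for $\lambda>1$.

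The main obstacle is upgrading this to strict inequality of the integrals. The hypothesis $\Phi(u)>0$ means that $E:=\{s:b(1-s)u(s)^{p(s)}>0\}$ has positive measure. On $E$ the pointwise inequality is strict. Integrating the difference $b(1-s)u(s)^{p(s)}\bigl(1-\lambda^{p(s)}\bigr)$, which is strictly positive on $E$ and nonnegative elsewhere, gives $\Phi(\lambda u)<\Phi(u)$ for $0<\lambda<1$. Finiteness of all integrals comes from $b\in L^1$ and boundedness of $u^{p(\cdot)}$, as in Lemma~\ref{lem:Phi-continuity-concave}. The case $\lambda>1$ is symmetric, using $\lambda^{p(s)}-1>0$.
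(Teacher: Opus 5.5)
Your proposal is correct and follows essentially the same route as the paper. You verify membership through the homogeneity of the thickness condition, the linearity of $(\bm 1*\cdot)(1)$, and the substitution $\delta\mapsto\delta/\lambda$, which gives $\Lambda_{p(\cdot)}(\lambda u)=\lambda\Lambda_{p(\cdot)}(u)$; you then integrate the pointwise comparison of $\lambda^{p(s)}$ with $1$ over the positive-measure set $E$ where $b(1-s)u(s)^{p(s)}>0$. Your explicit note that the integrals are finite, which is needed for the strict inequality to survive integration, is a small point the paper leaves implicit.
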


\begin{proof}
The case $\lambda=0$ is immediate, since $\bm 0\in\mathscr A_{\mathrm{hybrid}}$.
Assume now that $\lambda>0$.
First, $\lambda u\ge0$. Also,
\[
\min_{t\in[\alpha,\beta]}(\lambda u)(t)
=
\lambda\min_{t\in[\alpha,\beta]}u(t)
\ge
\lambda\eta_0\|u\|_\infty
=
\eta_0\|\lambda u\|_\infty.
\]
It remains to check the Luxemburg-type scaling coercivity condition. By
linearity of the convolution,
\[
(\bm 1*(\lambda u))(1)
=
\lambda(\bm 1*u)(1).
\]
We also have
\[
\Lambda_{p(\cdot)}(\lambda u)
=
\lambda\Lambda_{p(\cdot)}(u).
\]
Indeed,
\[
\int_0^1
\left|
\frac{\lambda u(t)}{\delta}
\right|^{p(t)}\,dt
=
\int_0^1
\left|
\frac{u(t)}{\delta/\lambda}
\right|^{p(t)}\,dt,
\]
and taking the infimum over $\delta>0$ is equivalent to taking the infimum
over $\delta/\lambda>0$. Hence, the claimed homogeneity holds.
Therefore,
\[
(\bm 1*(\lambda u))(1)
=
\lambda(\bm 1*u)(1)
\ge
\lambda\widetilde C_0\,\Lambda_{p(\cdot)}(u)
=
\widetilde C_0\,\Lambda_{p(\cdot)}(\lambda u).
\]
Thus, $\lambda u\in\mathscr A_{\mathrm{hybrid}}$.

Finally,
\[
\Phi(\lambda u)
=
\int_0^1 b(1-s)\lambda^{p(s)}u(s)^{p(s)}\,ds.
\]
Since $b\ge0$ a.e. and $\Phi(u)>0$, the set
\[
E:=\left\{s\in[0,1]: b(1-s)u(s)^{p(s)}>0\right\}
\]
has positive measure.
If $0<\lambda<1$, then $\lambda^{p(s)}<1$ for every $s\in[0,1]$. Hence,
\[
b(1-s)\lambda^{p(s)}u(s)^{p(s)}
<
b(1-s)u(s)^{p(s)}
\]
on $E$, while non-strict inequality holds everywhere. Integrating gives
\[
\Phi(\lambda u)<\Phi(u).
\]
Similarly, if $\lambda>1$, then $\lambda^{p(s)}>1$ for every $s\in[0,1]$. Thus,
\[
b(1-s)\lambda^{p(s)}u(s)^{p(s)}
>
b(1-s)u(s)^{p(s)}
\]
on $E$, while non-strict inequality holds everywhere. Integrating gives
\[
\Phi(\lambda u)>\Phi(u),
\]
which completes the proof.
\end{proof}
We now identify the relative boundary of $\widehat V_\rho$ inside
$\mathscr A_{\mathrm{hybrid}}$.

\begin{lemma}\label{lem:boundary-characterisation-concave}
For each $\rho>0$,
\[
\partial\widehat V_\rho
=\{u\in\mathscr A_{\mathrm{hybrid}}:\Phi(u)=\rho\},
\]
which is the boundary taken relative to $\mathscr A_{\mathrm{hybrid}}$.
\end{lemma}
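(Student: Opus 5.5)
We prove the two inclusions separately, working throughout in the subspace topology that $\mathscr A_{\mathrm{hybrid}}$ inherits from $\left(\mathscr C([0,1]),\|\cdot\|_\infty\right)$. The relative boundary is $\overline{\widehat V_\rho}\setminus\operatorname{int}\widehat V_\rho$, with closure and interior taken relative to $\mathscr A_{\mathrm{hybrid}}$. By Lemma~\ref{lem:Phi-continuity-concave}, $\widehat V_\rho$ is relatively open, so its relative boundary is $\overline{\widehat V_\rho}\setminus\widehat V_\rho$.

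\emph{Inclusion $\subseteq$.} Let $u\in\partial\widehat V_\rho$. Then there is a sequence $\{u_n\}\subset\widehat V_\rho$ with $\|u_n-u\|_\infty\to0$. Since $\mathscr A_{\mathrm{hybrid}}$ is closed in $\mathscr C([0,1])$, we have $u\in\mathscr A_{\mathrm{hybrid}}$; in any case the relative boundary lies in $\mathscr A_{\mathrm{hybrid}}$ by definition. Continuity of $\Phi$ (Lemma~\ref{lem:Phi-continuity-concave}) and $\Phi(u_n)<\rho$ give $\Phi(u)\le\rho$. Because $u\notin\widehat V_\rho$, we have $\Phi(u)\ge\rho$, and hence $\Phi(u)=\rho$.

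\emph{Inclusion $\supseteq$.} Let $u\in\mathscr A_{\mathrm{hybrid}}$ with $\Phi(u)=\rho>0$. Then $u\notin\widehat V_\rho$, so it suffices to show $u\in\overline{\widehat V_\rho}$. For this we use Lemma~\ref{lem:positive-scaling-admissible}. Take $\lambda_n:=1-\frac1n$ for $n\ge2$. The lemma gives $\lambda_nu\in\mathscr A_{\mathrm{hybrid}}$, and since $\Phi(u)=\rho>0$, it also gives $\Phi(\lambda_nu)<\Phi(u)=\rho$. Hence $\lambda_nu\in\widehat V_\rho$. Moreover,
\[
\|\lambda_nu-u\|_\infty=\tfrac1n\|u\|_\infty\to0,
\]
so $u$ lies in the relative closure of $\widehat V_\rho$. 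Therefore $u\in\partial\widehat V_\rho$.

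The only delicate point is the strict inequality $\Phi(\lambda u)<\Phi(u)$. This is exactly where the hypothesis $\rho>0$ enters, since it guarantees $\Phi(u)>0$; Lemma~\ref{lem:positive-scaling-admissible} then supplies the strict decrease. For completeness, we may also note that $\mathscr A_{\mathrm{hybrid}}$ is closed, even though the argument above does not need it. Nonnegativity and the thickness condition pass to uniform limits. For the coercivity condition, $(\bm1*u)(1)=\int_0^1u$ is continuous in $u$. The functional $\Lambda_{p(\cdot)}$ is lower semicontinuous under uniform convergence: if $\Lambda_{p(\cdot)}(u_n)\le c$, then $I_{p(\cdot)}(u_n/c')\le1$ for every $c'>c$, and dominated convergence gives $I_{p(\cdot)}(u/c')\le1$. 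Lower semicontinuity is all that is required to pass the inequality $(\bm1*u_n)(1)\ge\widetilde C_0\Lambda_{p(\cdot)}(u_n)$ to the limit.
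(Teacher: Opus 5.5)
Your proof is correct and follows essentially the same route as the paper: continuity of $\Phi$ gives the inclusion $\subseteq$, and approximating $u$ by $\lambda u$ with $\lambda\uparrow1$ via Lemma~\ref{lem:positive-scaling-admissible} gives $\supseteq$. The one difference is that you use relative openness of $\widehat V_\rho$, together with $u\notin\widehat V_\rho$, to rule out $u$ being an interior point, so you do not need the paper's exterior approximation $\lambda\downarrow1$; this is a harmless streamlining.
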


\begin{proof}
($\subseteq$) Let $u\in\partial\widehat V_\rho\subseteq\mathscr A_{\mathrm{hybrid}}$.
Relative-openness of $\widehat V_\rho$ excludes $\Phi(u)<\rho$, and so, $\Phi(u)\ge\rho$. If $\Phi(u)>\rho$, then choose
$\widehat V_\rho\ni u_n\to u$.  So,
$\Phi(u_n)<\rho$ while $\Phi(u_n)\to\Phi(u)$ by
Lemma~\ref{lem:Phi-continuity-concave}, giving $\Phi(u)\le\rho$, which is a
contradiction. Hence, $\Phi(u)=\rho$.

($\supseteq$) Let $u\in\mathscr A_{\mathrm{hybrid}}$ with $\Phi(u)=\rho>0$. By
Lemma~\ref{lem:positive-scaling-admissible}, $\lambda u\in\mathscr A_{\mathrm{hybrid}}$
for all $\lambda\ge0$, with $\Phi(\lambda u)<\rho$ for $0<\lambda<1$, and
$\Phi(\lambda u)>\rho$ for $\lambda>1$. Thus, $\lambda u\in\widehat V_\rho$ for
$\lambda\uparrow1$ and $\lambda u\notin\widehat V_\rho$ for $\lambda\downarrow1$,
while $\lambda u\to u$ in $\|\cdot\|_\infty$ in both cases. Hence, $u$ lies in
the relative boundary of $\widehat V_\rho$.  And this completes the proof.
\end{proof}

Our next three lemmata concern some technical properties of the Luxemburg functional $\Lambda_{p(\cdot)}$.  In particular, Lemma \ref{lem:gauge-lattice-monotone} demonstrates that the Luxemburg functional preserves order and is homogeneous, Lemma \ref{lem:thickness-concave} demonstrates that the functional satisfies a particular relationship with the infinity norm, and Lemma \ref{scaledtriangleinequality} shows that the functional obeys a scaled triangle inequality.  Each of these will prove to be essential later -- e.g., in both the proof of Lemma \ref{lem:Lp_lower_combined} and the proof of Corollary \ref{cor:Vrho-bounded}.  Note that Lemma \ref{lem:thickness-concave} is the analogue of \cite[Lemma 2.10]{goodrich2025luxemburgnormlocalisationnonlocal}.

\begin{lemma}\label{lem:gauge-lattice-monotone}
Let $s:[0,1]\to(0,+\infty)$ be continuous with
$0<s^-\le s(t)\le s^+<+\infty$, and let $v,w\in\mathscr C([0,1])$.
\begin{enumerate}
\item If $|v(t)|\le|w(t)|$ for all $t\in[0,1]$, then
\(\Lambda_{s(\cdot)}(v)\le\Lambda_{s(\cdot)}(w)\).
\item For every $c\ge0$ it holds that \(\Lambda_{s(\cdot)}(cw)=c\,\Lambda_{s(\cdot)}(w)\).
\end{enumerate}
\end{lemma}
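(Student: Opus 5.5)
Both parts follow directly from the definition of $\Lambda_{s(\cdot)}$ as an infimum over an admissible set of $\delta$'s. We only use that $t\mapsto t^{s(t)}$ is nondecreasing in its base, because $s(t)>0$. Nothing about concavity or $s<1$ enters; the exponent-generic observations of Remark~\ref{rem:exponent-generic} suffice.

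For (1), fix $\delta>0$ admissible for $w$, meaning $\int_0^1|w(t)/\delta|^{s(t)}\,dt\le1$. The hypothesis $|v(t)|\le|w(t)|$ gives $|v(t)/\delta|\le|w(t)/\delta|$ pointwise. Since $x\mapsto x^{s(t)}$ is nondecreasing on $[0,+\infty)$ for each fixed $t$,
\[
\int_0^1\left|\frac{v(t)}{\delta}\right|^{s(t)}dt\le\int_0^1\left|\frac{w(t)}{\delta}\right|^{s(t)}dt\le1,
\]
so $\delta$ is admissible for $v$. Hence the admissible set for $w$ is contained in that for $v$, and taking infima gives $\Lambda_{s(\cdot)}(v)\le\Lambda_{s(\cdot)}(w)$. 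The degenerate case $w\equiv\bm 0$ forces $v\equiv\bm 0$, so both sides are $0$. Finiteness of both sides is guaranteed by the exponent-generic form of Lemma~\ref{lem:Luxemburg_basic_concave}.

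For (2), the case $c=0$ is immediate because $\Lambda_{s(\cdot)}(\bm 0)=0$. For $c>0$, we repeat the change of variables already used in the proof of Lemma~\ref{lem:positive-scaling-admissible}. The identity
\[
\int_0^1\left|\frac{cw(t)}{\delta}\right|^{s(t)}dt=\int_0^1\left|\frac{w(t)}{\delta/c}\right|^{s(t)}dt
\]
shows that $\delta$ is admissible for $cw$ if and only if $\delta/c$ is admissible for $w$. The map $\delta\mapsto\delta/c$ is an order-preserving bijection of $(0,+\infty)$. It therefore carries the admissible set of $cw$ onto that of $w$, and the infima satisfy $\Lambda_{s(\cdot)}(cw)=c\,\Lambda_{s(\cdot)}(w)$.

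There is no real obstacle here. The only points needing care are the trivial cases ($w\equiv\bm 0$, and $c=0$) and noting that the monotonicity of $x\mapsto x^{s(t)}$ requires only $s(t)>0$. This is why the lemma is stated for a general exponent $s$: it can then be applied to $p(\cdot)/q$ and $q\,p(\cdot)$ later.
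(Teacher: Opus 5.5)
Your proposal is correct and follows essentially the same route as the paper: in (1) you show that the admissible set for $w$ is contained in that for $v$, using only $s(t)>0$; in (2) you use the identity $\int_0^1|cw/\delta|^{s(t)}\,dt=\int_0^1|w/(\delta/c)|^{s(t)}\,dt$ to see that the admissible set for $cw$ is $c$ times that for $w$, and you handle $c=0$ via $\Lambda_{s(\cdot)}(\bm 0)=0$. The only blemish is the opening phrase ``$t\mapsto t^{s(t)}$'', which should read $x\mapsto x^{s(t)}$ for fixed $t$, as you correctly write later.
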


\begin{proof}
To prove part (1) we argue as follows.  For every $\delta>0$, since $|v(t)|\le|w(t)|$ and $s(t)>0$, it follows that
\[
 \int_0^1\left|\frac{v(t)}{\delta}\right|^{s(t)}dt
 \le\int_0^1\left|\frac{w(t)}{\delta}\right|^{s(t)}dt,
\]
and so, every $\delta$ admissible for $w$ is admissible for $v$. Taking the infimum of both sides of the inequality yields the desired claim.

On the other hand, to prove part (2), notice first that if $c=0$, then both sides vanish (using that $\Lambda_{s(\cdot)}(\bm 0)=0$).
For $c>0$, on the other hand,
\[
 \int_0^1\left|\frac{c\,w(t)}{\delta}\right|^{s(t)}dt
 =\int_0^1\left|\frac{w(t)}{\delta/c}\right|^{s(t)}dt,
\]
and as $\delta$ ranges over $(0,+\infty)$ so does $\delta/c$; hence, the
admissible set for $cw$ is $c$ times that for $w$.  Therefore, taking the infimum of both sides of the identity gives
$\Lambda_{s(\cdot)}(cw)=c\,\Lambda_{s(\cdot)}(w)$, as desired.
\end{proof}

\begin{lemma}\label{lem:thickness-concave}
Assume \textnormal{(H2)} and \textnormal{(H3)}. Let \(u\in\mathscr C([0,1])\)
satisfy \eqref{eq:thickness}. Then
\[
\Lambda_{p(\cdot)}(u)
\ge
\eta_0(\beta-\alpha)^{1/p^-}\|u\|_\infty.
\]
Consequently,
\[
\|u\|_\infty
\le
\frac{1}{\eta_0(\beta-\alpha)^{1/p^-}}
\Lambda_{p(\cdot)}(u).
\]
\end{lemma}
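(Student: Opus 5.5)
The plan is to combine the monotonicity and homogeneity of Lemma~\ref{lem:gauge-lattice-monotone} with a direct lower bound for the gauge of a multiple of an indicator-like function. If $u\equiv\bm 0$ there is nothing to prove. Otherwise set $M:=\|u\|_\infty>0$ and $\delta:=\Lambda_{p(\cdot)}(u)$. Then $\delta>0$: by Lemma~\ref{lem:Luxemburg_basic_concave}, $S(\delta)\to+\infty$ as $\delta\downarrow0$, so small $\delta$ are not admissible. Since $u$ satisfies \eqref{eq:thickness}, we have $u(t)\ge\eta_0M$ for all $t\in[\alpha,\beta]$.

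Working directly with the normalisation identity $I_{p(\cdot)}(u/\delta)=1$ from Lemma~\ref{lem:Luxemburg_basic_concave}, restrict the integral to $[\alpha,\beta]$:
\[
1=\int_0^1\left|\frac{u(t)}{\delta}\right|^{p(t)}dt\ge\int_\alpha^\beta\left(\frac{\eta_0M}{\delta}\right)^{p(t)}dt .
\]
Set $r:=\eta_0M/\delta$ and split into two cases. If $r\le1$, then $\delta\ge\eta_0M\ge\eta_0(\beta-\alpha)^{1/p^-}M$, since $\beta-\alpha\le1$ and $1/p^->0$. If $r>1$, then $r^{p(t)}\ge r^{p^-}$, so $1\ge(\beta-\alpha)r^{p^-}$. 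This gives $r\le(\beta-\alpha)^{-1/p^-}$, that is, $\delta\ge\eta_0(\beta-\alpha)^{1/p^-}M$. Both cases yield the claimed inequality, and the second statement follows by division, because $\eta_0(\beta-\alpha)^{1/p^-}>0$.

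The only delicate point is choosing the correct exponent when comparing $r^{p(t)}$ with a constant power. The case split on $r$ versus $1$ is exactly what makes $p^-$, rather than $p^+$, the right choice in the regime that matters, and the factor $(\beta-\alpha)^{1/p^-}\le1$ absorbs the other case. Nothing beyond Lemma~\ref{lem:Luxemburg_basic_concave} and (H3), which gives $\beta>\alpha$ and $\eta_0>0$, is needed.
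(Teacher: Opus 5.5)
Your argument is correct and is essentially the paper's. The paper also restricts the modular to $[\alpha,\beta]$, uses $u\ge\eta_0\|u\|_\infty$ there, and compares $r^{p(t)}\ge r^{p^-}$ for $r\ge1$. The only difference is that it tests the explicit candidate $\delta_*:=\eta_0(\beta-\alpha)^{1/p^-}\|u\|_\infty$, where the ratio equals $(\beta-\alpha)^{-1/p^-}\ge1$ automatically, and then concludes by strict monotonicity of $S$, so it needs no case split; you instead work at $\delta=\Lambda_{p(\cdot)}(u)$ through the normalisation identity, which forces the split $r\le1$ versus $r>1$. Your opening sentence announces a route through Lemma~\ref{lem:gauge-lattice-monotone} and indicator-type functions that you never use (that lemma is stated only for continuous functions), so it should simply be dropped.
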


\begin{proof}
The proof of this lemma is essentially the same as the proof of \cite[Lemma 2.10]{goodrich2025luxemburgnormlocalisationnonlocal}.  First note that for \(u\equiv\bm{0}\), the conclusion is immediate.

Therefore, henceforth assume that
\(u\not\equiv\bm{0}\).
Set
\begin{equation}
m:=\beta-\alpha.
\end{equation}
Since \(0\le\alpha<\beta\le1\), we have \(0<m\le1\). Define
\[
\delta_*:=\eta_0 m^{1/p^-}\|u\|_\infty.
\]
Since \(u\not\equiv\bm{0}\), we have \(\|u\|_\infty>0\), and hence
\(\delta_*>0\).
By the thickness assumption,
\[
u(t)\ge \eta_0\|u\|_\infty
\qquad \text{for every }t\in[\alpha,\beta].
\]
Therefore,
\[
\int_0^1
\left|\frac{u(t)}{\delta_*}\right|^{p(t)}\,dt
\ge
\int_\alpha^\beta
\left(
\frac{\eta_0\|u\|_\infty}
{\eta_0 m^{1/p^-}\|u\|_\infty}
\right)^{p(t)}\,dt
=
\int_\alpha^\beta
m^{-p(t)/p^-}\,dt.
\]

Now, since \(0<m\le1\) and \(p(t)\ge p^-\), we have
\[
m^{-p(t)/p^-}\ge m^{-1}
\qquad \text{for every }t\in[\alpha,\beta].
\]
Thus,
\[
\int_0^1
\left|\frac{u(t)}{\delta_*}\right|^{p(t)}\,dt
\ge
\int_\alpha^\beta m^{-1}\,dt
=
(\beta-\alpha)m^{-1}
=
1,
\]
so that, setting
\[
S_u(\delta)
:=
\int_0^1
\left|\frac{u(t)}{\delta}\right|^{p(t)}\,dt,
\qquad \delta>0,
\]
we have \(S_u(\delta_*)\ge1\).
By Lemma~\ref{lem:Luxemburg_basic_concave}, the map
\(\delta\mapsto S_u(\delta)\) is strictly decreasing on \((0,+\infty)\).
Hence, for every \(0<\delta<\delta_*\),
\[
S_u(\delta)>S_u(\delta_*)\ge1.
\]
Therefore, no \(\delta<\delta_*\) is admissible in the definition of
\(\Lambda_{p(\cdot)}(u)\), and so,
\[
\Lambda_{p(\cdot)}(u)\ge \delta_*
=
\eta_0(\beta-\alpha)^{1/p^-}\|u\|_\infty.
\]
Since \(\eta_0(\beta-\alpha)^{1/p^-}>0\), rearranging gives
\[
\|u\|_\infty
\le
\frac{1}{\eta_0(\beta-\alpha)^{1/p^-}}
\Lambda_{p(\cdot)}(u),
\]
from which the desired inequality follows at once.
\end{proof}

\begin{lemma}\label{scaledtriangleinequality}
Assume \textnormal{(H2)} and \textnormal{(H3)}. Let
\(u,v\in\mathscr C([0,1])\) satisfy \eqref{eq:thickness}. Then
\begin{equation}\label{eq:pseudo_triangle_ineq}
\Lambda_{p(\cdot)}(u+v)
\le
Q_1\bigl(\Lambda_{p(\cdot)}(u)+\Lambda_{p(\cdot)}(v)\bigr),
\end{equation}
where
\begin{equation}\label{eq:Q1-transport-constant}
Q_1:=
\frac{1}{\eta_0(\beta-\alpha)^{1/p^-}}.
\end{equation}
\end{lemma}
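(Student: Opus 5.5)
The plan is to route everything through the supremum norm, using the two comparison results already established. By Lemma~\ref{lem:gauge-le-sup} applied to the nonnegative function $u+v$, we have $\Lambda_{p(\cdot)}(u+v)\le\|u+v\|_\infty$. Since $u,v\ge0$ (part of \eqref{eq:thickness}), the ordinary triangle inequality for the supremum norm gives $\|u+v\|_\infty\le\|u\|_\infty+\|v\|_\infty$. This step is where the failure of the triangle inequality for $\Lambda_{p(\cdot)}$ in the concave regime is bypassed: we never add inside the modular, only inside $\|\cdot\|_\infty$.

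Next I would convert each supremum norm back into the Luxemburg functional using Lemma~\ref{lem:thickness-concave}. Because $u$ and $v$ each satisfy \eqref{eq:thickness} individually, that lemma yields
\[
\|u\|_\infty\le Q_1\Lambda_{p(\cdot)}(u),\qquad \|v\|_\infty\le Q_1\Lambda_{p(\cdot)}(v),
\]
with $Q_1$ as in \eqref{eq:Q1-transport-constant}. Chaining the three inequalities gives
\[
\Lambda_{p(\cdot)}(u+v)\le\|u\|_\infty+\|v\|_\infty\le Q_1\bigl(\Lambda_{p(\cdot)}(u)+\Lambda_{p(\cdot)}(v)\bigr),
\]
which is \eqref{eq:pseudo_triangle_ineq}.

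There is essentially no obstacle. The only points to check are the degenerate case where $u$ or $v$ is $\bm 0$, which is covered because both Lemma~\ref{lem:gauge-le-sup} and Lemma~\ref{lem:thickness-concave} hold trivially there, and the requirement that the thickness hypothesis be applied to $u$ and $v$ separately rather than to the sum. It is worth remarking that $u+v$ is itself thick, since the minimum of a sum is at least the sum of the minima, although this fact is not needed for the argument.
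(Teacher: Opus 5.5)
Your proof is correct and is essentially the paper's argument: both apply Lemma~\ref{lem:gauge-le-sup} to $u+v$, use the supremum-norm triangle inequality, and then apply Lemma~\ref{lem:thickness-concave} to $u$ and $v$ separately to return to $\Lambda_{p(\cdot)}$. Your side remark that $u+v$ is itself thick also needs $\|u+v\|_\infty\le\|u\|_\infty+\|v\|_\infty$, as in Lemma~\ref{lem:thickness-additive}, but as you note, the argument does not use it.
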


\begin{proof}
By Lemma~\ref{lem:thickness-concave} applied to
\(u\) and \(v\) separately, we see both that
\[
\|u\|_\infty
\le
Q_1\Lambda_{p(\cdot)}(u)
\qquad
\text{and that}
\qquad
\|v\|_\infty
\le
Q_1\Lambda_{p(\cdot)}(v).
\]
At the same time, Lemma~\ref{lem:gauge-le-sup} applied to \(u+v\),
\[
\Lambda_{p(\cdot)}(u+v)\le\|u+v\|_\infty.
\]
Combining these with the triangle inequality for the supremum norm,
\[
\Lambda_{p(\cdot)}(u+v)
\le
\|u+v\|_\infty
\le
\|u\|_\infty+\|v\|_\infty
\le
Q_1\bigl(\Lambda_{p(\cdot)}(u)+\Lambda_{p(\cdot)}(v)\bigr),
\]
as claimed.
\end{proof}

\begin{remark}
Inequality~\eqref{eq:pseudo_triangle_ineq} is not an intrinsic triangle
inequality for $\Lambda_{p(\cdot)}$ on the ambient space $\mathscr C([0,1])$.
It is a cone-restricted estimate, obtained by transporting the supremum-norm
triangle inequality through the thickness comparison of
Lemma~\ref{lem:thickness-concave}. The transport constant $Q_1\ge1$ measures
the loss incurred in passing from the supremum-norm geometry to the
Luxemburg-type functional, and the inequality is available only for functions
satisfying \eqref{eq:thickness} on $[\alpha,\beta]$.
\end{remark}

Our next result demonstrates that the constant $C_0\eta_0(\beta-\alpha)^{1/p^-}$ acts as a coercivity constant for the functional $u\mapsto(\bm{1}*u)(1)$ relative to $\Lambda_{p(\cdot)}$.

\begin{theorem}\label{thm:coercivity-calibration}
Assume \textnormal{(H2)} and \textnormal{(H3)}. Let \(Q_1\) be defined by
\eqref{eq:Q1-transport-constant}. Define the constant $\widetilde{C}_0$ by
\begin{equation}\label{eq:C0tilde-calibrated}
\widetilde C_0
:=
\frac{C_0}{Q_1}
=
C_0\eta_0(\beta-\alpha)^{1/p^-}.
\end{equation}
Then
\[
Q_1\widetilde C_0=C_0.
\]
Furthermore,
\[
\widetilde C_0
=
C_0\eta_0(\beta-\alpha)^{1/p^-}
\le
\eta_0(\beta-\alpha).
\]
Consequently, whenever \(u\in\mathscr C([0,1])\) satisfies \eqref{eq:thickness}, we have
\[
(\bm 1*u)(1)
\ge
\eta_0(\beta-\alpha)\|u\|_\infty
\ge
\eta_0(\beta-\alpha)\Lambda_{p(\cdot)}(u)
\ge
\widetilde C_0\Lambda_{p(\cdot)}(u).
\]
\end{theorem}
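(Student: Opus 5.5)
The identity $Q_1\widetilde C_0=C_0$ is immediate from the definition \eqref{eq:C0tilde-calibrated}, since $Q_1=1/(\eta_0(\beta-\alpha)^{1/p^-})$. For the inequality $\widetilde C_0\le\eta_0(\beta-\alpha)$, I will combine two elementary facts. First, $0<C_0<1$ by (H3)(2). Second, since $0<m:=\beta-\alpha\le1$ and $0<p^-<1$, we have $1/p^->1$, and hence $m^{1/p^-}\le m$. Multiplying gives
\[
C_0\eta_0 m^{1/p^-}\le \eta_0 m^{1/p^-}\le\eta_0 m .
\]

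For the chain of inequalities, take $u$ satisfying \eqref{eq:thickness}; in particular $u\ge0$. The first inequality uses only nonnegativity and thickness:
\[
(\bm 1*u)(1)=\int_0^1u(s)\,ds\ge\int_\alpha^\beta u(s)\,ds\ge(\beta-\alpha)\min_{[\alpha,\beta]}u\ge\eta_0(\beta-\alpha)\|u\|_\infty .
\]
Here I note that $(\bm 1*u)(1)=\int_0^1\bm 1(1-s)u(s)\,ds$ is just the integral of $u$. The second inequality follows from Lemma~\ref{lem:gauge-le-sup}, which gives $\Lambda_{p(\cdot)}(u)\le\|u\|_\infty$ for $u\ge0$, multiplied by $\eta_0(\beta-\alpha)>0$. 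The third inequality is the bound $\widetilde C_0\le\eta_0(\beta-\alpha)$ just proved, multiplied by $\Lambda_{p(\cdot)}(u)\ge0$.

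There is no real obstacle here. The only point needing care is the direction of the comparison $m^{1/p^-}\le m$. This uses $m\le1$ together with the fact that the exponent $1/p^-$ exceeds $1$ in the concave regime. The strict bound $C_0<1$ is not even needed; $C_0\le1$ suffices.
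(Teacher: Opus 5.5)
Your proposal is correct and follows the paper's proof essentially step by step. Both obtain the identity $Q_1\widetilde C_0=C_0$ from the definition, get $\widetilde C_0\le\eta_0(\beta-\alpha)$ from $C_0<1$ and $(\beta-\alpha)^{1/p^-}\le\beta-\alpha$, bound $\int_\alpha^\beta u$ below by thickness, and use Lemma~\ref{lem:gauge-le-sup} for the middle inequality of the chain.
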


\begin{proof}
By the definition of \(\widetilde C_0\), it obviously follows that $Q_1\widetilde C_0=C_0$. By the definition of \(Q_1\) it follows that
\[
\widetilde C_0
=
\frac{C_0}{Q_1}
=
C_0\eta_0(\beta-\alpha)^{1/p^-}.
\]
Since \(0<C_0<1\), \(0<\beta-\alpha\le1\), and \(0<p^-<1\), we have
\[
(\beta-\alpha)^{1/p^-}\le \beta-\alpha.
\]
Therefore,
\[
\widetilde C_0
=
C_0\eta_0(\beta-\alpha)^{1/p^-}
\le
\eta_0(\beta-\alpha).
\]
Now let \(u\in\mathscr C([0,1])\) satisfy \eqref{eq:thickness}. Then
\[
(\bm 1*u)(1)
=
\int_0^1 u(t)\,dt
\ge
\int_\alpha^\beta u(t)\,dt
\ge
\eta_0(\beta-\alpha)\|u\|_\infty.
\]
By Lemma~\ref{lem:gauge-le-sup},
\[
\Lambda_{p(\cdot)}(u)\le \|u\|_\infty.
\]
Hence,
\[
(\bm 1*u)(1)
\ge
\eta_0(\beta-\alpha)\|u\|_\infty
\ge
\eta_0(\beta-\alpha)\Lambda_{p(\cdot)}(u)
\ge
\widetilde C_0\Lambda_{p(\cdot)}(u),
\]
which proves the claim.
\end{proof}

Before presenting Theorem \ref{thm:cone-promotion}, we require the following lemma, seeing as Lemma \ref{lem:thickness-additive} is used to prove Theorem \ref{thm:cone-promotion}.  Note that Lemma \ref{lem:thickness-additive} essentially establishes that the thickness condition \eqref{eq:thickness} satisfies an additive closure condition.

\begin{lemma}\label{lem:thickness-additive}
Assume \textnormal{(H3)}. Let \(u,v\in\mathscr C([0,1])\) satisfy \eqref{eq:thickness}. Then \(u+v\) also
satisfies \eqref{eq:thickness}.
\end{lemma}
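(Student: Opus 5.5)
The argument is a direct verification from the definitions, using only the subadditivity of the supremum norm and the superadditivity of the minimum over $[\alpha,\beta]$. Nonnegativity of $u+v$ is immediate from $u\ge0$ and $v\ge0$. It remains to establish
\[
\min_{t\in[\alpha,\beta]}(u+v)(t)\ge\eta_0\|u+v\|_\infty .
\]

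The first step is to bound the minimum of the sum from below. For every $t\in[\alpha,\beta]$ we have $u(t)\ge\min_{[\alpha,\beta]}u$ and $v(t)\ge\min_{[\alpha,\beta]}v$. Taking the minimum over $t\in[\alpha,\beta]$ (attained by continuity on a compact interval) gives
\[
\min_{t\in[\alpha,\beta]}(u+v)(t)\ge\min_{t\in[\alpha,\beta]}u(t)+\min_{t\in[\alpha,\beta]}v(t).
\]
Applying \eqref{eq:thickness} to $u$ and to $v$ separately, the right-hand side is at least $\eta_0\|u\|_\infty+\eta_0\|v\|_\infty$.

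The second step compares this with the norm of the sum. The triangle inequality for $\|\cdot\|_\infty$ gives $\|u\|_\infty+\|v\|_\infty\ge\|u+v\|_\infty$. Since $\eta_0>0$, multiplying by $\eta_0$ and combining with the first step yields the desired inequality, so $u+v$ satisfies \eqref{eq:thickness}.

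There is no real obstacle here. The only point that needs care is the direction of the two inequalities: the minimum is superadditive and the norm is subadditive, and both go the right way. The hypothesis \textnormal{(H3)} is used only to guarantee that $\eta_0\in(0,1]$ and $0\le\alpha<\beta\le1$ are fixed, so that \eqref{eq:thickness} is meaningful. No property of $\Lambda_{p(\cdot)}$ is needed.
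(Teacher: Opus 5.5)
Your proof is correct and follows the paper's argument: the paper works pointwise, showing $u(t)+v(t)\ge\eta_0\|u\|_\infty+\eta_0\|v\|_\infty\ge\eta_0\|u+v\|_\infty$ for each $t\in[\alpha,\beta]$, which is the same chain you obtain after first passing to the minimum over $[\alpha,\beta]$. You also state the nonnegativity of $u+v$ explicitly, which the paper leaves implicit.
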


\begin{proof}
For every \(t\in[\alpha,\beta]\),
\[
u(t)+v(t)
\ge
\eta_0\|u\|_\infty+\eta_0\|v\|_\infty
=
\eta_0\bigl(\|u\|_\infty+\|v\|_\infty\bigr)
\ge
\eta_0\|u+v\|_\infty,
\]
where the last inequality uses the triangle inequality
\(\|u+v\|_\infty\le\|u\|_\infty+\|v\|_\infty\).
\end{proof}

%In this sense, the present cone is the calibrated concave generalisation of the convex hybrid cone: in the convex setting the Luxemburg functional carries the usual triangle inequality and the coercivity constant is \(C_0\), whereas in the concave setting the cone-restricted quasi-triangle inequality carries the transport loss \(Q_1\), and the corresponding coercivity constant becomes \(\widetilde C_0=C_0/Q_1\).
With Lemma \ref{lem:thickness-additive} in hand, our next result, Theorem \ref{thm:cone-promotion},  establishes that $\mathscr{A}_{\text{hybrid}}$ is a cone and, moreover, that when restricted to this cone, the Luxemburg functional $\Lambda_{p(\cdot)}$ is a quasi-norm.

\begin{theorem}\label{thm:cone-promotion}
Assume \textnormal{(H2)} and \textnormal{(H3)}.  Let \(\widetilde C_0\) be defined as in \eqref{eq:C0tilde-calibrated}. Then:
\begin{enumerate}[label=(\roman*)]
\item for every \(u\in\mathscr C([0,1])\) satisfying
\eqref{eq:thickness} we have that
\begin{equation}
(\bm 1*u)(1)\ge\widetilde C_0\,\Lambda_{p(\cdot)}(u).
\end{equation}
\item \(\mathscr A_{\mathrm{hybrid}}\) is a closed pointed convex cone in
\(\mathscr C([0,1])\). (Consequently, henceforth we identify
\(\mathscr K_{\mathrm{hybrid}}:=\mathscr A_{\mathrm{hybrid}}\).)
\item On \(\mathscr K_{\mathrm{hybrid}}\), the Luxemburg-type scaling
functional \(\Lambda_{p(\cdot)}\) is a positively homogeneous cone-restricted
quasi-norm. We denote this restricted functional by
\[
\|u\|_{L^{p(\cdot)}}:=\Lambda_{p(\cdot)}(u),
\qquad u\in\mathscr K_{\mathrm{hybrid}}.
\]
This notation is reserved for elements of \(\mathscr K_{\mathrm{hybrid}}\);
on the ambient space \(\mathscr C([0,1])\), the functional is still denoted by
\(\Lambda_{p(\cdot)}\). For every \(u,v\in\mathscr K_{\mathrm{hybrid}}\) and
every \(\lambda\ge0\), we have
\begin{align*}
\|u\|_{L^{p(\cdot)}} &\ge 0,\\
\|u\|_{L^{p(\cdot)}}&=0
\Longleftrightarrow
u\equiv\bm{0},\\
\|\lambda u\|_{L^{p(\cdot)}}
&=
\lambda\,\|u\|_{L^{p(\cdot)}},\\
\|u+v\|_{L^{p(\cdot)}}
&\le
Q_1\bigl(
\|u\|_{L^{p(\cdot)}}+\|v\|_{L^{p(\cdot)}}
\bigr),
\end{align*}
where \(Q_1\ge1\) is defined in \eqref{eq:Q1-transport-constant}.
\end{enumerate}
\end{theorem}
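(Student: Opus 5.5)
Part (i) is immediate from Theorem~\ref{thm:coercivity-calibration}: for any $u$ satisfying \eqref{eq:thickness}, the chain of inequalities established there ends in $(\bm 1*u)(1)\ge\widetilde C_0\Lambda_{p(\cdot)}(u)$. An important consequence is that the third defining condition of $\mathscr A_{\mathrm{hybrid}}$ is redundant. Hence $\mathscr A_{\mathrm{hybrid}}$ coincides with the set of nonnegative continuous functions satisfying \eqref{eq:thickness}, and I will work with this simpler description throughout.

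For (ii), closure under nonnegative scalars is Lemma~\ref{lem:positive-scaling-admissible}. Closure under addition follows from Lemma~\ref{lem:thickness-additive} together with $u+v\ge0$, after which (i) supplies the coercivity condition automatically. Together these give convexity. For closedness, take $u_n\to u$ in $\|\cdot\|_\infty$ with $u_n\in\mathscr A_{\mathrm{hybrid}}$. Nonnegativity passes to the limit pointwise. The thickness inequality passes to the limit because $u\mapsto\min_{[\alpha,\beta]}u$ and $u\mapsto\|u\|_\infty$ are both continuous (indeed $1$-Lipschitz) in the sup norm. The coercivity condition is then recovered from (i), so no continuity of $\Lambda_{p(\cdot)}$ is needed. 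Pointedness means $u,-u\in\mathscr A_{\mathrm{hybrid}}$ implies $u\equiv\bm 0$, and this is immediate from $u\ge0$ and $-u\ge0$.

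For (iii), $\Lambda_{p(\cdot)}(u)\ge0$ holds by definition as an infimum over positive $\delta$. Homogeneity is Lemma~\ref{lem:gauge-lattice-monotone}(2). The quasi-triangle inequality is Lemma~\ref{scaledtriangleinequality}, since elements of the cone satisfy \eqref{eq:thickness}. The bound $Q_1\ge1$ holds because $\eta_0\le1$ and $(\beta-\alpha)^{1/p^-}\le1$. For definiteness, $\Lambda_{p(\cdot)}(\bm 0)=0$ by Lemma~\ref{lem:Luxemburg_basic_concave}. Conversely, if $\Lambda_{p(\cdot)}(u)=0$, then Lemma~\ref{lem:thickness-concave} gives $\|u\|_\infty\le Q_1\cdot0=0$, so $u\equiv\bm 0$. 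Alternatively, one can argue directly from Lemma~\ref{lem:Luxemburg_basic_concave}: for $u\not\equiv\bm 0$ the admissible threshold $\delta_*$ is strictly positive.

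The main subtlety, rather than a genuine obstacle, is the closedness step. One should not attempt to pass the coercivity inequality to the limit via continuity of $\Lambda_{p(\cdot)}$, which has not been established in the concave regime. Reducing the defining conditions to the thickness condition via (i) sidesteps this entirely.
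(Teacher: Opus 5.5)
Your proposal is correct and follows essentially the same route as the paper: part (i) is read off from Theorem~\ref{thm:coercivity-calibration}, and in part (ii) each cone property is reduced to nonnegativity plus \eqref{eq:thickness}, with the coercivity condition then recovered from (i) rather than passed to the limit. In that reduction, closedness uses the $1$-Lipschitz continuity of $u\mapsto\min_{[\alpha,\beta]}u$ and $u\mapsto\|u\|_\infty$, and additivity uses Lemma~\ref{lem:thickness-additive}. Part (iii) is assembled from the same lemmata as in the paper; your alternative proof of definiteness via Lemma~\ref{lem:thickness-concave} and your explicit check that $Q_1\ge1$ are harmless minor variations.
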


\begin{proof}
We begin by noting that part (i) has already been proved in Theorem~\ref{thm:coercivity-calibration}.  Therefore, we begin by proving part (ii). We directly verify the cone properties for \(\mathscr A_{\mathrm{hybrid}}\).  So, first let \((u_n)\subset\mathscr A_{\mathrm{hybrid}}\) with
\(\|u_n-u\|_\infty\to0\). Pointwise convergence gives \(u\ge0\). Since
\[
\left|
\min_{[\alpha,\beta]}u_n
-
\min_{[\alpha,\beta]}u
\right|
\le
\|u_n-u\|_\infty
\qquad\text{and}\qquad
\|u_n\|_\infty\to\|u\|_\infty,
\]
passing to the limit in
\[
\min_{[\alpha,\beta]}u_n\ge\eta_0\|u_n\|_\infty
\]
gives
\[
\min_{[\alpha,\beta]}u\ge\eta_0\|u\|_\infty.
\]
Thus \(u\) is nonnegative and satisfies \eqref{eq:thickness}, so by part
(i) it also satisfies the Luxemburg coercivity condition. Hence,
\(u\in\mathscr A_{\mathrm{hybrid}}\).  Also note that if \(u\in\mathscr A_{\mathrm{hybrid}}\) and
\(-u\in\mathscr A_{\mathrm{hybrid}}\), then \(u\ge0\) and \(-u\ge0\), and so,
\(u\equiv\bm{0}\). Thus,
\[
\mathscr A_{\mathrm{hybrid}}\cap(-\mathscr A_{\mathrm{hybrid}})
=
\{\bm 0\}.
\]

On the other hand, for \(\lambda\ge0\) and \(u\in\mathscr A_{\mathrm{hybrid}}\), we have
\(\lambda u\ge0\) and
\[
\min_{t\in[\alpha,\beta]}(\lambda u)(t)
=
\lambda\min_{t\in[\alpha,\beta]}u(t)
\ge
\lambda\eta_0\|u\|_\infty
=
\eta_0\|\lambda u\|_\infty,
\]
and so, \(\lambda u\) is nonnegative and satisfies \eqref{eq:thickness}. By
part (i), \(\lambda u\) satisfies the Luxemburg coercivity condition.  Consequently,
\(\lambda u\in\mathscr A_{\mathrm{hybrid}}\).

Finally, let \(u,v\in\mathscr A_{\mathrm{hybrid}}\). Each is nonnegative and satisfies
\eqref{eq:thickness}, so by Lemma~\ref{lem:thickness-additive}, \(u+v\) is
nonnegative and satisfies
\[
\min_{[\alpha,\beta]}(u+v)
\ge
\eta_0\|u+v\|_\infty.
\]
By part (i), \(u+v\) then satisfies the Luxemburg coercivity condition
\[
\big(\bm 1*(u+v)\big)(1)
\ge
\widetilde C_0\,\Lambda_{p(\cdot)}(u+v).
\]
Hence, \(u+v\in\mathscr A_{\mathrm{hybrid}}\).

Combining the four properties, we conclude that \(\mathscr A_{\mathrm{hybrid}}\) is a closed
pointed convex cone in \(\mathscr C([0,1])\).  Moreover, we identify
\[
\mathscr K_{\mathrm{hybrid}}
:=
\mathscr A_{\mathrm{hybrid}}.
\]
We next address part (iii) in the statement of the theorem.  The nonnegativity of \(\Lambda_{p(\cdot)}\) is immediate from its definition as an
infimum over positive numbers. By Lemma~\ref{lem:Luxemburg_basic_concave}, we have that
\(\Lambda_{p(\cdot)}(u)=0\) when \(u\equiv\bm{0}\); when \(u\not\equiv\bm{0}\) the
admissible threshold is positive, giving \(\Lambda_{p(\cdot)}(u)>0\). Hence,
\[
\Lambda_{p(\cdot)}(u)=0
\iff
u\equiv\bm{0}.
\]
Positive homogeneity is established in
Lemma~\ref{lem:positive-scaling-admissible}. Finally, since every
\(u,v\in\mathscr K_{\mathrm{hybrid}}\) is nonnegative and satisfies \eqref{eq:thickness}, the pseudo-triangle inequality Lemma \ref{scaledtriangleinequality} applies, giving
\[
\Lambda_{p(\cdot)}(u+v)
\le
Q_1\bigl(
\Lambda_{p(\cdot)}(u)
+
\Lambda_{p(\cdot)}(v)
\bigr).
\]
In the notation of the statement of the theorem, this is the quasi-triangle
inequality for \(\|\cdot\|_{L^{p(\cdot)}}\) on
\(\mathscr K_{\mathrm{hybrid}}\).
\end{proof}

Our next lemma establishes a lower bound on $\Vert\cdot\Vert_{L^{p(\cdot)}}$.  Note that Lemma \ref{lem:Lp_lower_combined} is the analogue of \cite[Lemma 2.11]{goodrich2025luxemburgnormlocalisationnonlocal}.

\begin{lemma}
\label{lem:Lp_lower_combined}
Assume \textnormal{(H2)} and \textnormal{(H3)}, and let $\rho>0$. If
$u \in \partial \widehat V_\rho$, then
\begin{equation*}
\|u\|_{L^{p(\cdot)}}\ge
\eta_0(\beta-\alpha)^{\frac{1}{p^-}}
\left[
\left(\frac{\rho}{(b*\bm{1})(1)}\right)^{\frac{1}{p^+}}
+
\varepsilon_1(\rho,b)
\right],
\end{equation*}
where
\[
\varepsilon_1(\rho,b):=
\begin{cases}
\left(\dfrac{\rho}{(b*\bm{1})(1)}\right)^{\frac{1}{p^-}}
-\left(\dfrac{\rho}{(b*\bm{1})(1)}\right)^{\frac{1}{p^+}},
& 0<\dfrac{\rho}{(b*\bm{1})(1)}<1,\\[2ex]
0,
& \dfrac{\rho}{(b*\bm{1})(1)}\ge1.
\end{cases}
\]
\end{lemma}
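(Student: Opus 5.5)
Fix $u\in\partial\widehat V_\rho$. By Lemma~\ref{lem:boundary-characterisation-concave}, $u\in\mathscr K_{\mathrm{hybrid}}$ and $\Phi(u)=\rho>0$; in particular $u\not\equiv\bm 0$. Set $M:=\|u\|_\infty>0$ and $r:=\rho/(b*\bm 1)(1)$, noting that $(b*\bm 1)(1)=\int_0^1 b(1-s)\,ds>0$ by (H1). The strategy has two stages: first bound $M$ from below in terms of $r$ using only $\Phi(u)=\rho$, and then convert this into a lower bound on $\Lambda_{p(\cdot)}(u)$ via the thickness comparison of Lemma~\ref{lem:thickness-concave}. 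The constant $\eta_0(\beta-\alpha)^{1/p^-}$ in the conclusion is exactly the constant from that lemma, and the bracket is designed to equal $r^{1/p^+}$ when $r\ge1$ and $r^{1/p^-}$ when $0<r<1$.

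For the first stage, use $0\le u(s)\le M$ pointwise. This gives
\[
\rho=\int_0^1 b(1-s)u(s)^{p(s)}\,ds\le \int_0^1 b(1-s)M^{p(s)}\,ds\le \max\{M^{p^-},M^{p^+}\}\,(b*\bm 1)(1),
\]
so $\max\{M^{p^-},M^{p^+}\}\ge r$. Split according to the size of $M$.
\begin{itemize}
\item If $M\ge1$, the maximum is $M^{p^+}$, so $M\ge r^{1/p^+}$.
\item If $M<1$, the maximum is $M^{p^-}$, so $M\ge r^{1/p^-}$, and necessarily $r<1$.
\end{itemize}

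Next, reconcile these with the two cases in $\varepsilon_1$.
\begin{itemize}
\item \emph{Case $r\ge1$.} The subcase $M<1$ is impossible, since it forces $r<1$. Hence $M\ge r^{1/p^+}$, which is the bracket when $\varepsilon_1=0$.
\item \emph{Case $0<r<1$.} The target is $M\ge r^{1/p^-}$. If $M<1$, this was obtained above. If $M\ge1$, then $M\ge 1>r^{1/p^-}$ trivially. Either way $M\ge r^{1/p^-}=r^{1/p^+}+\varepsilon_1(\rho,b)$.
\end{itemize}
In both cases, therefore, $M\ge r^{1/p^+}+\varepsilon_1(\rho,b)$.

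For the second stage, recall that $u\in\mathscr K_{\mathrm{hybrid}}$ satisfies \eqref{eq:thickness}. Lemma~\ref{lem:thickness-concave} then gives
\[
\|u\|_{L^{p(\cdot)}}=\Lambda_{p(\cdot)}(u)\ge\eta_0(\beta-\alpha)^{1/p^-}M,
\]
and combining this with the bound on $M$ yields the claim.

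The main obstacle is the bookkeeping in the first stage. One must notice that the regime of $M$ (above or below $1$) does not coincide with the regime of $r$, and handle the mismatched subcase ($r<1$ but $M\ge1$) by the trivial bound $M\ge 1$. The tempting alternative of bounding $\Lambda_{p(\cdot)}(u)$ directly through Corollary~\ref{cor:mod_scaling_concave} fails because of the weight $b$; this is why the argument routes through $\|u\|_\infty$.
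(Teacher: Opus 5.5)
Your proof is correct and follows the same two-step route as the paper. First you get a lower bound on $\|u\|_\infty$ from $\Phi(u)=\rho$, and then you transport it to $\Lambda_{p(\cdot)}(u)$ via Lemma~\ref{lem:thickness-concave}. The only difference is that the paper imports the bound $\|u\|_\infty\ge\bigl(\rho/(b*\bm 1)(1)\bigr)^{1/p^+}+\varepsilon_1(\rho,b)$ from \cite[Lemma~3.3]{goodrich2025unified}, whereas you derive it directly. Your derivation is correct, including the mismatched case where $\rho/(b*\bm 1)(1)<1\le\|u\|_\infty$.
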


\begin{proof}
Since $u\in\partial\widehat V_\rho\subseteq\mathscr K_{\mathrm{hybrid}}$,
the cone-restricted notation
$\|u\|_{L^{p(\cdot)}}=\Lambda_{p(\cdot)}(u)$ is valid. Therefore, following exactly the proof of \cite[Lemma 2.11]{goodrich2025luxemburgnormlocalisationnonlocal}, we simply note that, by
Lemma~\ref{lem:thickness-concave},
\begin{equation}
\|u\|_{L^{p(\cdot)}} \ge
\eta_0(\beta-\alpha)^{\frac{1}{p^-}}\|u\|_\infty,\notag
\end{equation}
whereas, by \cite[Lemma 3.3]{goodrich2025unified},
\begin{equation}
\|u\|_\infty \ge
\left(\frac{\rho}{(b*\bm{1})(1)}\right)^{\frac{1}{p^+}}
+
\varepsilon_1(\rho,b),\notag
\end{equation}
and so, combining the two inequalities yields the stated inequality.
\end{proof}

Our next two lemmata, namely Lemma \ref{lem:thickness-divided-exponent} and Lemma \ref{lem:reverse-holder-divided-exponent}, provide quantitative bounds on the Luxemburg functional $\Lambda_{\frac{p(\cdot)}{q}}$ in case, as indicated, the exponent $\displaystyle\frac{p(\cdot)}{q}$ is realised in a fractional form.  These two results will be used, in part, to prove both Lemma \ref{lem:upper-localisation-divided-exponent} and Corollary \ref{cor:Vrho-bounded}.  Note that Lemma \ref{lem:thickness-divided-exponent} is the analogue of \cite[Lemma 2.17]{goodrich2025luxemburgnormlocalisationnonlocal}, whereas Lemma \ref{lem:reverse-holder-divided-exponent} is the analogue of \cite[Lemma 2.18]{goodrich2025luxemburgnormlocalisationnonlocal}.  However, there is an important dissimilarity between Lemma \ref{lem:reverse-holder-divided-exponent} and \cite[Lemma 2.18]{goodrich2025luxemburgnormlocalisationnonlocal} inasmuch as the former provides for a sharper bound in the case where $\displaystyle\frac{\rho}{C_1(q)}\ge1$.

\begin{lemma}\label{lem:thickness-divided-exponent}
Assume \textnormal{(H2)} and \textnormal{(H3)}, and let $q>1$. If
$u\in\mathscr K_{\mathrm{hybrid}}$, then
\begin{equation}\label{eq:divided-thickness-lower}
\Lambda_{\frac{p(\cdot)}{q}}(u)
\ge
\eta_0(\beta-\alpha)^{\frac{q}{p^-}}\|u\|_\infty,
\end{equation}
and, consequently,
\begin{equation}\label{eq:p-to-divided-gauge-comparison}
\eta_0(\beta-\alpha)^{\frac{q}{p^-}}\Lambda_{p(\cdot)}(u)
\le
\Lambda_{\frac{p(\cdot)}{q}}(u).
\end{equation}
\end{lemma}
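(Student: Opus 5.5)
The plan is to mimic the proof of Lemma~\ref{lem:thickness-concave}, now with the divided exponent $s(\cdot):=p(\cdot)/q$. By Remark~\ref{rem:exponent-generic}, Lemma~\ref{lem:Luxemburg_basic_concave} holds in its exponent-generic form for $s$, with $s^-=p^-/q$ and $s^+=p^+/q$. Since $q>1$, we have $s^+<1$, so $s$ even stays in the concave range.

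If $u\equiv\bm 0$, both sides of \eqref{eq:divided-thickness-lower} vanish. Otherwise, put $m:=\beta-\alpha\in(0,1]$ and
\[
\delta_*:=\eta_0 m^{q/p^-}\|u\|_\infty=\eta_0 m^{1/s^-}\|u\|_\infty>0 .
\]
Every $u\in\mathscr K_{\mathrm{hybrid}}$ satisfies \eqref{eq:thickness}, so $u(t)\ge\eta_0\|u\|_\infty$ on $[\alpha,\beta]$. Therefore
\[
\int_0^1\left|\frac{u(t)}{\delta_*}\right|^{s(t)}dt\ge\int_\alpha^\beta m^{-s(t)/s^-}\,dt\ge\int_\alpha^\beta m^{-1}\,dt=1,
\]
where the second inequality uses $m\le1$ and $s(t)\ge s^-$. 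The function $S(\delta)=\int_0^1|u/\delta|^{s(t)}\,dt$ is strictly decreasing by the generic Lemma~\ref{lem:Luxemburg_basic_concave}. Hence no $\delta<\delta_*$ is admissible, and so $\Lambda_{p(\cdot)/q}(u)\ge\delta_*$, which is \eqref{eq:divided-thickness-lower}. One could also simply invoke Lemma~\ref{lem:thickness-concave} in its exponent-generic form with $p^-$ replaced by $p^-/q$. Its proof uses only $0<s^-$, continuity, and thickness, so this is legitimate.

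For \eqref{eq:p-to-divided-gauge-comparison}, apply Lemma~\ref{lem:gauge-le-sup} to get $\Lambda_{p(\cdot)}(u)\le\|u\|_\infty$, which is valid since $u\ge0$. Multiplying by $\eta_0 m^{q/p^-}>0$ and chaining with \eqref{eq:divided-thickness-lower} gives
\[
\eta_0(\beta-\alpha)^{q/p^-}\Lambda_{p(\cdot)}(u)\le\eta_0(\beta-\alpha)^{q/p^-}\|u\|_\infty\le\Lambda_{\frac{p(\cdot)}{q}}(u).
\]

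There is no substantive obstacle here. The only point needing care is checking that the exponent-generic versions of Lemma~\ref{lem:Luxemburg_basic_concave} and Lemma~\ref{lem:gauge-le-sup} apply to $s=p/q$. They do, since $s$ is continuous with $0<s^-\le s^+<\infty$. One should also confirm that the exponent becomes $1/s^-=q/p^-$, which is exactly the exponent appearing in the statement.
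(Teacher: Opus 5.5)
Your proposal is correct and follows essentially the same route as the paper. You use the same test value $\delta_*=\eta_0(\beta-\alpha)^{q/p^-}\|u\|_\infty$, use thickness together with $p(t)\ge p^-$ and $\beta-\alpha\le1$ to show the divided-exponent modular at $\delta_*$ is at least $1$, conclude via strict monotonicity from the exponent-generic form of Lemma~\ref{lem:Luxemburg_basic_concave}, and then obtain the comparison with $\Lambda_{p(\cdot)}$ from Lemma~\ref{lem:gauge-le-sup}, merely writing out explicitly the computation the paper delegates to its cited reference.
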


\begin{proof}
The proof of \eqref{eq:divided-thickness-lower} is that of
\cite[Lemma~2.17]{goodrich2025luxemburgnormlocalisationnonlocal},
to which we refer for the details: with
$\delta_*:=\eta_0(\beta-\alpha)^{q/p^-}\|u\|_\infty$, \eqref{eq:thickness} together with $p(t)\ge p^-$ gives
\begin{equation}
\int_0^1(u/\delta_*)^{p(t)/q}\,dt\ge1,\notag
\end{equation}
and the conclusion follows from the
strict monotonicity of
\begin{equation}
\delta\mapsto\int_0^1(u/\delta)^{p(t)/q}\,dt.\notag
\end{equation}
Two
points differ from the cited setting in \cite[Lemma~2.17]{goodrich2025luxemburgnormlocalisationnonlocal}. First, here $q>1$ places the divided
exponent $p(\cdot)/q$ below one, so the left-hand side is the Luxemburg gauge
$\Lambda_{p(\cdot)/q}$ rather than a norm; the required strict monotonicity follows from the exponent-generic form of
Lemma~\ref{lem:Luxemburg_basic_concave}, applied with
\(s(\cdot)=p(\cdot)/q\). Second,
\eqref{eq:p-to-divided-gauge-comparison} follows from
\eqref{eq:divided-thickness-lower} together with
Lemma~\ref{lem:gauge-le-sup}, which gives
$\Lambda_{p(\cdot)}(u)\le\|u\|_\infty$, which completes the proof of the lemma.
\end{proof}
\begin{lemma}\label{lem:reverse-holder-divided-exponent}
Assume \textnormal{(H1)} and \textnormal{(H2)}. Let $q>1$ be such that
\[
b^{\frac{1}{1-q}}\in L^1((0,1]),
\]
and define
\[
r(t):=\frac{p(t)}{q},
\qquad
C_1(q):=
\left(
\int_0^1 b(1-s)^{\frac{1}{1-q}}\,ds
\right)^{1-q}.
\]
If $u\in\partial\widehat V_\rho$, then
\[
I_{r(\cdot)}(u)
:=
\int_0^1 u(t)^{\frac{p(t)}{q}}\,dt
\le
\left(
\frac{\rho}{C_1(q)}
\right)^{\frac{1}{q}}.
\]
Consequently,
\[
\Lambda_{\frac{p(\cdot)}{q}}(u)
\le
M_{p/q}(\rho),
\]
where
\[
M_{p/q}(\rho)
:=
\begin{cases}
\displaystyle
\left(
\frac{\rho}{C_1(q)}
\right)^{\frac{1}{p^-}},
& \displaystyle \frac{\rho}{C_1(q)}\ge1,\\[2ex]
\displaystyle
\left(
\frac{\rho}{C_1(q)}
\right)^{\frac{1}{p^+}},
& \displaystyle 0<\frac{\rho}{C_1(q)}<1.
\end{cases}
\]
\end{lemma}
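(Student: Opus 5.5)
The plan is to obtain the modular bound by a single application of H\"older's inequality with the weight $b(1-s)$ split off, and then to convert that modular bound into a gauge bound via the exponent-generic form of Corollary~\ref{cor:mod_scaling_concave} (see Remark~\ref{rem:exponent-generic}) with $s(\cdot)=r(\cdot)=p(\cdot)/q$.

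\emph{Step 1 (modular bound).} Let $u\in\partial\widehat V_\rho$. By Lemma~\ref{lem:boundary-characterisation-concave}, $u\in\mathscr K_{\mathrm{hybrid}}$ and $\Phi(u)=\rho$. Since $b>0$ a.e., we may write, for a.e.\ $s$,
\[
u(s)^{\frac{p(s)}{q}}=\bigl(b(1-s)u(s)^{p(s)}\bigr)^{\frac1q}\,b(1-s)^{-\frac1q}.
\]
We then apply H\"older's inequality with the conjugate exponents $q$ and $q'=\frac{q}{q-1}$. This gives
\[
I_{r(\cdot)}(u)\le\Bigl(\int_0^1 b(1-s)u(s)^{p(s)}\,ds\Bigr)^{\frac1q}\Bigl(\int_0^1 b(1-s)^{-\frac{1}{q-1}}\,ds\Bigr)^{\frac{q-1}{q}}.
\]
The first factor equals $\rho^{1/q}$. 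For the second factor, note that $-\frac{1}{q-1}=\frac{1}{1-q}$. Moreover, $\int_0^1 b(1-s)^{1/(1-q)}\,ds=\int_0^1 b(s)^{1/(1-q)}\,ds$ is finite by hypothesis. Hence the second factor equals $\bigl(C_1(q)^{1/(1-q)}\bigr)^{(q-1)/q}=C_1(q)^{-1/q}$, which yields $I_{r(\cdot)}(u)\le(\rho/C_1(q))^{1/q}$. This is where the integrability hypothesis on $b^{1/(1-q)}$ is used; beyond that the step is routine.

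\emph{Step 2 (gauge bound).} Since $\Phi(u)=\rho>0$, we have $u\not\equiv\bm 0$, so $\Lambda:=\Lambda_{r(\cdot)}(u)>0$. Set $r^\pm=p^\pm/q$. The exponent-generic Corollary~\ref{cor:mod_scaling_concave} gives $\Lambda^{p^-/q}\le I_{r(\cdot)}(u)$ if $\Lambda\ge1$, and $\Lambda^{p^+/q}\le I_{r(\cdot)}(u)$ if $\Lambda\le1$. Combined with Step 1, this gives
\[
\Lambda\le(\rho/C_1(q))^{1/p^-}\ \text{ if }\Lambda\ge1,\qquad \Lambda\le(\rho/C_1(q))^{1/p^+}\ \text{ if }\Lambda\le1.
\]

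\emph{Step 3 (matching cases).} The only real care needed is aligning the cases on $\Lambda$ with the cases on $\rho/C_1(q)$ in the definition of $M_{p/q}(\rho)$.
\begin{enumerate}
\item Suppose $\rho/C_1(q)\ge1$. If $\Lambda\ge1$, Step 2 gives $\Lambda\le(\rho/C_1(q))^{1/p^-}$ directly. If $\Lambda<1$, then trivially $\Lambda<1\le(\rho/C_1(q))^{1/p^-}$.
\item Suppose $0<\rho/C_1(q)<1$. The case $\Lambda>1$ is impossible, since it would force $1<\Lambda\le(\rho/C_1(q))^{1/p^-}<1$. Hence $\Lambda\le1$, and Step 2 gives $\Lambda\le(\rho/C_1(q))^{1/p^+}$.
\end{enumerate}
In both cases $\Lambda_{\frac{p(\cdot)}{q}}(u)\le M_{p/q}(\rho)$. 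This also shows why the present bound is sharper than the convex-regime analogue when $\rho/C_1(q)\ge1$.
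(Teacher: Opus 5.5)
Your proposal is correct and follows essentially the same route as the paper. Your Step 1 is the forward-H\"older derivation of the reverse H\"older inequality that the paper invokes directly to get $\rho\ge C_1(q)\bigl(I_{r(\cdot)}(u)\bigr)^q$, and your Steps 2--3 reproduce the paper's case analysis via the exponent-generic modular--gauge comparison; the only cosmetic difference is that, when $\rho/C_1(q)\ge1$ and $\Lambda<1$, you use the trivial bound $\Lambda<1\le(\rho/C_1(q))^{1/p^-}$ where the paper uses $K^{1/r^+}\le K^{1/r^-}$ with $K=(\rho/C_1(q))^{1/q}$.
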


\begin{proof}
Since $u\in\partial\widehat V_\rho$,
Lemma~\ref{lem:boundary-characterisation-concave} gives
\[
\rho
=
\Phi(u)
=
\left(b*u^{p(\cdot)}\right)(1)
=
\int_0^1 b(1-s)u(s)^{p(s)}\,ds.
\]
Writing
\[
u(s)^{p(s)}
=
\left(u(s)^{p(s)/q}\right)^q,
\]
we obtain
\[
\rho
=
\int_0^1 b(1-s)\left(u(s)^{p(s)/q}\right)^q\,ds.
\]
By the reverse H\"older inequality,
\[
\int_0^1 b(1-s)\left(u(s)^{p(s)/q}\right)^q\,ds
\ge
\left(
\int_0^1 b(1-s)^{\frac{1}{1-q}}\,ds
\right)^{1-q}
\left(
\int_0^1 u(s)^{p(s)/q}\,ds
\right)^q.
\]
Thus,
\[
\rho
\ge
C_1(q)
\left(
\int_0^1 u(s)^{p(s)/q}\,ds
\right)^q,
\]
and hence
\[
\int_0^1 u(s)^{p(s)/q}\,ds
\le
\left(
\frac{\rho}{C_1(q)}
\right)^{1/q},
\]
which proves the modular estimate.

Now, set
\[
K:=
\left(
\frac{\rho}{C_1(q)}
\right)^{1/q}.
\]
Then $I_{r(\cdot)}(u)\le K$, with $r^-:=p^-/q$ and $r^+:=p^+/q$.  We consider cases based upon the size of the number $K$.

We first consider the case in which $0<K<1$.  Then $I_{r(\cdot)}(u)<1$, so $\Lambda_{r(\cdot)}(u)<1$. The small-regime
modular--Luxemburg comparison gives
$I_{r(\cdot)}(u)\ge\Lambda_{r(\cdot)}(u)^{r^+}$, whence
\[
\Lambda_{r(\cdot)}(u)
\le
K^{1/r^+}
=
K^{q/p^+}
=
\left(
\frac{\rho}{C_1(q)}
\right)^{1/p^+}.
\]

Next we consider the case in which $K\ge1$.  If $\Lambda_{r(\cdot)}(u)<1$, then the small-regime comparison gives
\[
\Lambda_{r(\cdot)}(u)
\le
K^{1/r^+}
\le
K^{1/r^-}
=
\left(
\frac{\rho}{C_1(q)}
\right)^{1/p^-},
\]
using $r^+\ge r^-$ and $K\ge1$. If instead $\Lambda_{r(\cdot)}(u)\ge1$, then the
large-regime comparison gives
$I_{r(\cdot)}(u)\ge\Lambda_{r(\cdot)}(u)^{r^-}$, whence
\[
\Lambda_{r(\cdot)}(u)
\le
K^{1/r^-}
=
K^{q/p^-}
=
\left(
\frac{\rho}{C_1(q)}
\right)^{1/p^-}.
\]

Combining the cases yields the stated bound for
$\Lambda_{p(\cdot)/q}(u)=\Lambda_{r(\cdot)}(u)$.  And this completes the proof.
\end{proof}

The next result establishes an upper bound on $\Vert u\Vert_{\infty}$ provided that $u\in\partial\widehat{V}_{\rho}$.  Note that Lemma~\ref{lem:upper-localisation-divided-exponent} is similar to \cite[Corollary 2.28]{goodrich2025luxemburgnormlocalisationnonlocal}, though, in fact, it is \emph{stronger} since the upper bound that Lemma \ref{lem:upper-localisation-divided-exponent} yields provides tighter control over $\Vert u\Vert_{\infty}$ than is implied by \cite[Corollary 2.28]{goodrich2025luxemburgnormlocalisationnonlocal} in the convex setting.
\begin{lemma}\label{lem:upper-localisation-divided-exponent}
Assume \textnormal{(H1)}, \textnormal{(H2)}, and \textnormal{(H3)}.
Let $q>1$ be such that
\[
 b^{\frac{1}{1-q}}\in L^1((0,1]),
\]
and let $M_{p/q}(\rho)$ be defined as in
Lemma~\ref{lem:reverse-holder-divided-exponent}. Define
\[
 B_{\infty,\rho}^{(q)}
 := \frac{1}{\eta_0(\beta-\alpha)^{q/p^-}}\,M_{p/q}(\rho).
\]
If $u\in\partial\widehat V_\rho$, then
\[
 \|u\|_\infty \le B_{\infty,\rho}^{(q)}.
\]
Equivalently,
\[
 \|u\|_\infty
 \le \frac{1}{\eta_0(\beta-\alpha)^{q/p^-}}\,M_{p/q}(\rho).
\]
\end{lemma}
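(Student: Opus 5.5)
The proof will chain together the two divided-exponent lemmas just established. Fix $u\in\partial\widehat V_\rho$. By Lemma~\ref{lem:boundary-characterisation-concave}, $u\in\mathscr A_{\mathrm{hybrid}}=\mathscr K_{\mathrm{hybrid}}$ (Theorem~\ref{thm:cone-promotion}) and $\Phi(u)=\rho>0$. In particular $u$ is nonnegative, satisfies \eqref{eq:thickness}, and is not identically $\bm 0$, so every result stated for elements of $\mathscr K_{\mathrm{hybrid}}$ applies to $u$.

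\textbf{Step 1 (lower bound).} Apply Lemma~\ref{lem:thickness-divided-exponent} with the given $q>1$. Inequality \eqref{eq:divided-thickness-lower} yields
\[
\eta_0(\beta-\alpha)^{q/p^-}\|u\|_\infty\le\Lambda_{\frac{p(\cdot)}{q}}(u).
\]
This step uses only the thickness condition and the strict monotonicity of the divided-exponent modular, available through the exponent-generic form of Lemma~\ref{lem:Luxemburg_basic_concave} noted in Remark~\ref{rem:exponent-generic}.

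\textbf{Step 2 (upper bound).} The hypotheses (H1), (H2), and $b^{1/(1-q)}\in L^1((0,1])$ are exactly those of Lemma~\ref{lem:reverse-holder-divided-exponent}, and $u\in\partial\widehat V_\rho$. That lemma therefore gives
\[
\Lambda_{\frac{p(\cdot)}{q}}(u)\le M_{p/q}(\rho).
\]

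\textbf{Step 3 (combine).} Chaining the two inequalities gives
\[
\eta_0(\beta-\alpha)^{q/p^-}\|u\|_\infty\le M_{p/q}(\rho).
\]
Since $\eta_0>0$ and $\beta-\alpha>0$, the factor $\eta_0(\beta-\alpha)^{q/p^-}$ is strictly positive. Dividing by it gives $\|u\|_\infty\le B^{(q)}_{\infty,\rho}$, which is the claim in both of its stated forms.

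There is no real obstacle here. The only point needing care is confirming that $u$ lies in $\mathscr K_{\mathrm{hybrid}}$, which is needed to invoke Lemma~\ref{lem:thickness-divided-exponent}. This follows because the relative boundary is taken inside $\mathscr A_{\mathrm{hybrid}}$, and $\mathscr A_{\mathrm{hybrid}}$ has been identified with $\mathscr K_{\mathrm{hybrid}}$.
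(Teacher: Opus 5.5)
Your proposal is correct and follows essentially the same route as the paper's proof: you place $u$ in $\mathscr K_{\mathrm{hybrid}}$, take the lower bound $\eta_0(\beta-\alpha)^{q/p^-}\|u\|_\infty\le\Lambda_{\frac{p(\cdot)}{q}}(u)$ from Lemma~\ref{lem:thickness-divided-exponent}, and take the upper bound $\Lambda_{\frac{p(\cdot)}{q}}(u)\le M_{p/q}(\rho)$ from Lemma~\ref{lem:reverse-holder-divided-exponent}. Chaining these and dividing by the positive prefactor gives the claim, exactly as in the paper.
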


\begin{proof}
Since $u\in\partial\widehat V_\rho\subseteq\mathscr K_{\mathrm{hybrid}}$,
Lemma~\ref{lem:thickness-divided-exponent} gives
\[
 \eta_0(\beta-\alpha)^{q/p^-}\|u\|_\infty
 \le \Lambda_{\frac{p(\cdot)}{q}}(u).
\]
On the other hand, by Lemma~\ref{lem:reverse-holder-divided-exponent},
\[
 \Lambda_{\frac{p(\cdot)}{q}}(u) \le M_{p/q}(\rho).
\]
Combining the two inequalities yields
\[
 \eta_0(\beta-\alpha)^{q/p^-}\|u\|_\infty \le M_{p/q}(\rho).
\]
Therefore,
\[
 \|u\|_\infty
 \le \frac{1}{\eta_0(\beta-\alpha)^{q/p^-}}\,M_{p/q}(\rho)
 = B_{\infty,\rho}^{(q)},
\]
as desired.
\end{proof}

As a consequence of the preceding lemmata, we have the following corollary, which establishes that the set $\widehat{V}_{\rho}$ is bounded.  This is important for the use of $\widehat{V}_{\rho}$ in the existence results in the next section.

\begin{corollary}
\label{cor:Vrho-bounded}
Assume \textnormal{(H1)}, \textnormal{(H2)}, and \textnormal{(H3)}.
Suppose that, for some $q>1$,
\[
 b^{\frac{1}{1-q}}\in L^1((0,1]).
\]
Then, for each $\rho>0$, the set $\widehat V_\rho$ is bounded in
$\mathscr C([0,1])\cap L^{p(\cdot)}([0,1])$, in the sense that both
$\|u\|_\infty$ and $\Lambda_{p(\cdot)}(u)$ are uniformly bounded for
$u\in\widehat V_\rho$.
\end{corollary}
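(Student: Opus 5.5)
The plan is to reduce the statement to the boundary estimate of Lemma~\ref{lem:upper-localisation-divided-exponent} by a level-set argument. Then Lemma~\ref{lem:gauge-le-sup} will transfer the supremum-norm bound to $\Lambda_{p(\cdot)}$. Fix $\rho>0$ and let $u\in\widehat V_\rho$. We split into the cases $\Phi(u)=0$ and $\Phi(u)>0$.

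\emph{Case $\Phi(u)=0$.} Since $u\ge0$ is continuous and $b(1-s)>0$ for almost every $s$, the condition $\int_0^1 b(1-s)u(s)^{p(s)}\,ds=0$ forces $u(s)^{p(s)}=0$ almost everywhere. Hence $u\equiv\bm 0$, and both quantities vanish.

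\emph{Case $\Phi(u)>0$.} Set $\sigma:=\Phi(u)\in(0,\rho)$. Because $u\in\mathscr K_{\mathrm{hybrid}}$ and $\Phi(u)=\sigma$, Lemma~\ref{lem:boundary-characterisation-concave} (applied with $\sigma$ in place of $\rho$) gives $u\in\partial\widehat V_\sigma$. Lemma~\ref{lem:upper-localisation-divided-exponent} then yields
\[
\|u\|_\infty\le B^{(q)}_{\infty,\sigma}=\frac{1}{\eta_0(\beta-\alpha)^{q/p^-}}M_{p/q}(\sigma).
\]
It remains to check that $\sigma\mapsto M_{p/q}(\sigma)$ is nondecreasing on $(0,+\infty)$, so that $M_{p/q}(\sigma)\le M_{p/q}(\rho)$. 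Put $x:=\sigma/C_1(q)$. On each branch, $x\mapsto x^{1/p^+}$ for $x<1$ and $x\mapsto x^{1/p^-}$ for $x\ge1$, the map is increasing. Both branches take the value $1$ at $x=1$, so $M_{p/q}$ is continuous and nondecreasing. This gives the uniform bound $\|u\|_\infty\le B^{(q)}_{\infty,\rho}$ for all $u\in\widehat V_\rho$.

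A more robust alternative avoids the reparametrisation. One reruns the reverse H\"older step of Lemma~\ref{lem:reverse-holder-divided-exponent} under the hypothesis $\Phi(u)<\rho$ rather than $\Phi(u)=\rho$. That inequality only uses $\rho\ge\Phi(u)$, so it gives $I_{p(\cdot)/q}(u)\le(\rho/C_1(q))^{1/q}$ directly, and the case analysis then yields $\Lambda_{p(\cdot)/q}(u)\le M_{p/q}(\rho)$. Combining this with Lemma~\ref{lem:thickness-divided-exponent} gives the same bound.

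Finally, since $u\ge0$, Lemma~\ref{lem:gauge-le-sup} gives $\Lambda_{p(\cdot)}(u)\le\|u\|_\infty\le B^{(q)}_{\infty,\rho}$, which bounds the Luxemburg functional uniformly as well. The only step needing care is passing from the boundary statements to the open set $\widehat V_\rho$, which is handled by the level-set reduction together with the monotonicity of $M_{p/q}$, or by the direct rerun. The degenerate case $\Phi(u)=0$ is where (H1)(2), the almost-everywhere positivity of $b$, is used.
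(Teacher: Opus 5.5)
Your proof is correct, but it reaches the open set $\widehat V_\rho$ by a different route from the paper's.

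\textbf{The paper's route.} The paper keeps the level $\rho$ fixed and moves $u$ along its ray. For $u\not\equiv\bm 0$ it uses three facts about $\lambda\mapsto\Phi(\lambda u)$: continuity (Lemma~\ref{lem:Phi-continuity-concave}), strict monotonicity (Lemma~\ref{lem:positive-scaling-admissible}), and the growth bound $\Phi(\lambda u)\ge\lambda^{p^-}\Phi(u)$. By the intermediate value theorem these give some $\lambda^*>1$ with $\lambda^*u\in\partial\widehat V_\rho$. The paper then applies Lemma~\ref{lem:upper-localisation-divided-exponent} to $\lambda^*u$ and divides by $\lambda^*>1$.

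\textbf{Your level-set route.} You keep $u$ fixed and move the level to $\sigma=\Phi(u)$, using Lemma~\ref{lem:boundary-characterisation-concave} at level $\sigma$. The only extra cost is the elementary fact that $\rho\mapsto M_{p/q}(\rho)$ is increasing; the two branches meet at the value $1$ when $\rho=C_1(q)$. This avoids the ray analysis and the intermediate value theorem entirely. It also gives a slightly finer, mass-dependent bound $\|u\|_\infty\le B^{(q)}_{\infty,\Phi(u)}$, which tends to $0$ with $\Phi(u)$. The paper's route, in contrast, needs no information about how $B^{(q)}_{\infty,\rho}$ depends on $\rho$, and it yields strict inequality $\|u\|_\infty<B^{(q)}_{\infty,\rho}$ for $u\not\equiv\bm 0$.

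\textbf{Your direct rerun.} This is the most economical of the three. The reverse H\"older step in Lemma~\ref{lem:reverse-holder-divided-exponent} uses only $\Phi(u)\le\rho$, not $\Phi(u)=\rho$. Combined with Lemma~\ref{lem:thickness-divided-exponent}, it gives the bound on all of $\{u\in\mathscr K_{\mathrm{hybrid}}:\Phi(u)\le\rho\}=\overline{\widehat V_\rho}$ in one step, with no appeal to the boundary characterisation at all.
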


\begin{proof}
Recall from Lemma~\ref{lem:reverse-holder-divided-exponent} the constant
\[
 C_1(q):=\left(\int_0^1 b(1-s)^{\frac{1}{1-q}}\,ds\right)^{1-q},
\]
and from Lemma~\ref{lem:upper-localisation-divided-exponent} the bound
\[
 B_{\infty,\rho}^{(q)}
 :=\frac{1}{\eta_0(\beta-\alpha)^{q/p^-}}\,M_{p/q}(\rho).
\]
The hypothesis $b^{1/(1-q)}\in L^1((0,1])$ ensures that
\[
 \int_0^1 b(1-s)^{\frac{1}{1-q}}\,ds
\]
is finite. Since $b>0$ a.e., the preceding integral is also positive, and so, lives in
$(0,+\infty)$. Therefore,
\[
 C_1(q)=\left(\int_0^1 b(1-s)^{\frac{1}{1-q}}\,ds\right)^{1-q}
\]
is also both finite and positive, and consequently $M_{p/q}(\rho)<+\infty$ and
$B_{\infty,\rho}^{(q)}<+\infty$.

Fix $\rho>0$ and let $u\in\widehat V_\rho$, so that
$u\in\mathscr K_{\mathrm{hybrid}}$ and $\Phi(u)<\rho$.
If $u\equiv\bm{0}$, then $\|u\|_\infty=0<B_{\infty,\rho}^{(q)}$.
Otherwise $u\not\equiv\bm{0}$, and so, $\|u\|_\infty>0$. Since $b>0$ almost
everywhere and $u$ is continuous, nonnegative, and not identically zero, it follows that
\[
 \Phi(u)=\int_0^1 b(1-s)u(s)^{p(s)}\,ds>0.
\]

Consider the map $\lambda\mapsto\Phi(\lambda u)$ for $\lambda\ge1$. By
Lemma~\ref{lem:Phi-continuity-concave} this map is continuous, and by
Lemma~\ref{lem:positive-scaling-admissible} it is strictly increasing, so
that
\[
 \lambda\ge1
 \quad\Longrightarrow\quad
 \Phi(\lambda u)\ge\Phi(u),
\]
with strict inequality when $\lambda>1$. At $\lambda=1$ we have
$\Phi(u)<\rho$, while for $\lambda\ge1$ the bound $\lambda^{p(s)}\ge\lambda^{p^-}$
gives
\[
\Phi(\lambda u)
=
\int_0^1 b(1-s)\lambda^{p(s)}u(s)^{p(s)}\,ds
\ge
\lambda^{p^-}
\int_0^1 b(1-s)u(s)^{p(s)}\,ds
=
\lambda^{p^-}\Phi(u).
\]
Since \(p^->0\) and \(\Phi(u)>0\), it follows that
\[
\Phi(\lambda u)\to+\infty
\qquad\text{as }\lambda\to+\infty.
\]

Now, by the intermediate value theorem there exists $\lambda^*>1$ with
$\Phi(\lambda^* u)=\rho$. By
Lemma~\ref{lem:positive-scaling-admissible},
$\lambda^* u\in\mathscr K_{\mathrm{hybrid}}$, and since
$\Phi(\lambda^* u)=\rho$,
Lemma~\ref{lem:boundary-characterisation-concave} gives
$\lambda^* u\in\partial\widehat V_\rho$. Applying
Lemma~\ref{lem:upper-localisation-divided-exponent} to $\lambda^* u$, we see that
\[
 \|\lambda^* u\|_\infty\le B_{\infty,\rho}^{(q)}.
\]
Since $\lambda^*>1$, it follows that
\[
 \|u\|_\infty=\frac{1}{\lambda^*}\,\|\lambda^* u\|_\infty
 <\|\lambda^* u\|_\infty\le B_{\infty,\rho}^{(q)}.
\]
In either case $\|u\|_\infty\le B_{\infty,\rho}^{(q)}<+\infty$, and so,
$\widehat V_\rho$ is bounded in $\mathscr C([0,1])$.
For the gauge bound, Lemma~\ref{lem:gauge-le-sup} gives
$\Lambda_{p(\cdot)}(u)\le\|u\|_\infty\le B_{\infty,\rho}^{(q)}$ for every
$u\in\widehat V_\rho$. Hence, $\widehat V_\rho$ is bounded in the
$L^{p(\cdot)}$-Luxemburg gauge as well, and therefore in
$\mathscr C([0,1])\cap L^{p(\cdot)}([0,1])$, as claimed.
\end{proof}

\begin{remark}
Corollary~\ref{cor:Vrho-bounded} is the concave-regime counterpart of
\cite[Corollary 2.19]{goodrich2025luxemburgnormlocalisationnonlocal}.
In contrast to the older direct supremum-norm route, the bound here is
obtained internally, as assembled in
Lemma~\ref{lem:upper-localisation-divided-exponent}.

The localisation estimate of
Lemma~\ref{lem:upper-localisation-divided-exponent} strictly improves the
older concave-regime supremum estimate of
\cite[Lemma~3.4]{goodrich2025unified}, and the improvement is structural.
Both estimates use the same reverse-H\"older bound and the same thickness
transport, differing only in the middle. 
To make the comparison explicit, set
\[
 K:=\left(\frac{\rho}{C_1(q)}\right)^{1/q},
 \qquad
 C_1(q)=\left(\int_0^1 b(1-s)^{\frac{1}{1-q}}\,ds\right)^{1-q},
\]
where $C_1(q)$ is finite and positive under the hypothesis
$b^{1/(1-q)}\in L^1((0,1])$. The older estimate of
\cite[Lemma~3.4]{goodrich2025unified} reads
\[
 \|u\|_\infty
 \le
 \frac{1}{\eta_0}\,(\beta-\alpha)^{-q/p^-}\,(K+1)^{q/p^-},
\]
whereas Lemma~\ref{lem:upper-localisation-divided-exponent} gives
\[
 \|u\|_\infty
 \le
 \frac{1}{\eta_0}\,(\beta-\alpha)^{-q/p^-}\,M_{p/q}(\rho),
 \qquad
 M_{p/q}(\rho)=
 \begin{cases}
   K^{q/p^-}, & K\ge1,\\[1ex]
   K^{q/p^+}, & 0<K<1.
 \end{cases}
\]
Since $x\mapsto x^{q/p^-}$ is strictly increasing on
$[0,+\infty)$, if $K\ge1$, then $K^{q/p^-}<(K+1)^{q/p^-}$, while if $0<K<1$,
then $K^{q/p^+}<1<(K+1)^{q/p^-}$. Hence, $M_{p/q}(\rho)<(K+1)^{q/p^-}$ for
every admissible $\rho>0$, and the divided-exponent bound is strictly
sharper.

The two upper bounds separate most clearly in the small-mass regime. As
$\rho\downarrow0$, equivalently $K\downarrow0$, we see that $(K+1)^{q/p^-}\to1$, so the older estimate retains the upper bound factor $\displaystyle\tfrac{1}{\eta_0}(\beta-\alpha)^{-q/p^-}$.  On the other hand, the multiplicative term $M_{p/q}(\rho)$ instead satisfies $M_{p/q}(\rho)=K^{q/p^+}\to0$, and so, the new estimate decays to $0$ with the nonlocal mass.
\end{remark}

Our final two lemmata of this section prove additional relationships between the Luxemburg functionals $\Lambda_{\frac{p(\cdot)}{q}}$, $\Lambda_{qp(\cdot)}$, and $\Lambda_{p(\cdot)}$ in case $q>1$.

\begin{lemma}\label{lem:Lpq_le_Lp_concave}
Assume \textnormal{(H2)} and let $q>1$. Then, for every $u\in\mathscr{C}([0,1])$,
\begin{equation*}
 \Lambda_{\frac{p(\cdot)}{q}}(u) \le \Lambda_{p(\cdot)}(u).
\end{equation*}
\end{lemma}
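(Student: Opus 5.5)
The plan is to compare the two modulars at the single scale $\delta:=\Lambda_{p(\cdot)}(u)$ and show that this $\delta$ is admissible in the definition of $\Lambda_{\frac{p(\cdot)}{q}}(u)$. If $u\equiv\bm 0$, both sides vanish by Lemma~\ref{lem:Luxemburg_basic_concave} (in its exponent-generic form, Remark~\ref{rem:exponent-generic}), so there is nothing to prove. Henceforth I assume $u\not\equiv\bm 0$. Then Lemma~\ref{lem:Luxemburg_basic_concave} gives $0<\delta<+\infty$ together with the normalisation identity
\[
\int_0^1\left|\frac{u(t)}{\delta}\right|^{p(t)}\,dt=1 .
\]

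A pointwise comparison will not work. On the set where $|u/\delta|<1$, raising to the smaller power $p(t)/q$ \emph{increases} the integrand, so one cannot simply dominate $|u/\delta|^{p(t)/q}$ by $|u/\delta|^{p(t)}$. Instead I will exploit that $[0,1]$ carries a probability measure. Set $g(t):=|u(t)/\delta|^{p(t)}\ge0$, so that $\int_0^1 g\,dt=1$, and note that $|u(t)/\delta|^{p(t)/q}=g(t)^{1/q}$. Since $x\mapsto x^{1/q}$ is concave on $[0,+\infty)$ for $q>1$, Jensen's inequality gives
\[
\int_0^1 \left|\frac{u(t)}{\delta}\right|^{\frac{p(t)}{q}}dt=\int_0^1 g(t)^{1/q}\,dt\le\left(\int_0^1 g(t)\,dt\right)^{1/q}=1 .
\]
Equivalently, one may apply H\"older's inequality with exponents $q$ and $q/(q-1)$ against the constant function $\bm 1$, using $|[0,1]|=1$.

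Hence $\delta$ belongs to the admissible set for $\Lambda_{\frac{p(\cdot)}{q}}(u)$. Taking the infimum yields $\Lambda_{\frac{p(\cdot)}{q}}(u)\le\delta=\Lambda_{p(\cdot)}(u)$.

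The only step needing care is the Jensen/H\"older step. Its validity rests on two facts: the underlying measure has total mass exactly one, and the normalisation identity holds with equality (rather than merely $\le1$). This is precisely why Lemma~\ref{lem:Luxemburg_basic_concave} is invoked. Actually, $\int g\le1$ would already suffice, so even the equality is not essential. Everything else is routine.
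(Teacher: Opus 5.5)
Your proof is correct and is essentially the paper's argument, since both rest on Jensen's inequality for the power $q$ on the unit-measure interval $[0,1]$. The paper runs it in the opposite direction: it normalises at $\Lambda_{p(\cdot)/q}(u)$, applies Jensen to the convex map $x\mapsto x^{q}$, and then uses strict monotonicity of $\delta\mapsto\int_0^1(|u(t)|/\delta)^{p(t)}\,dt$, whereas your concave-Jensen version gives admissibility of $\Lambda_{p(\cdot)}(u)$ directly and so avoids the monotonicity step (indeed, as you note, it shows every $\delta$ admissible for $\Lambda_{p(\cdot)}(u)$ is admissible for $\Lambda_{p(\cdot)/q}(u)$).
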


\begin{proof}
The proof is essentially the same as the proof of \cite[Lemma~2.15]{goodrich2025luxemburgnormlocalisationnonlocal},
with the following modifications, but we include the proof for completeness.

So, if $u\equiv\bm{0}$, then the result is immediate. Otherwise, set
\[
 r(t):=\frac{p(t)}{q}, \qquad C:=\Lambda_{r(\cdot)}(u), \qquad v(t):=\frac{|u(t)|}{C}.
\]
By the Luxemburg normalisation identity,
\[
 \int_0^1 v(t)^{r(t)}\,dt = 1.
\]
Since $q>1$, Jensen's inequality, applied to the convex map $x\mapsto x^q$ on $[0,+\infty)$ over the unit-measure interval $[0,1]$, gives
\[
 \int_0^1 v(t)^{p(t)}\,dt
   = \int_0^1 \left(v(t)^{r(t)}\right)^{q}\,dt
   \ge \left( \int_0^1 v(t)^{r(t)}\,dt \right)^{q}
   = 1.
\]
Thus,
\[
 \int_0^1 \left( \frac{|u(t)|}{C} \right)^{p(t)}\,dt \ge 1.
\]
The map
\[
 \delta\mapsto \int_0^1 \left( \frac{|u(t)|}{\delta} \right)^{p(t)}\,dt
\]
is strictly decreasing for $u\not\equiv\bm{0}$. Since
\[
 \int_0^1 \left( \frac{|u(t)|}{\Lambda_{p(\cdot)}(u)} \right)^{p(t)}\,dt = 1,
\]
we conclude that $C\le \Lambda_{p(\cdot)}(u)$. Because
$C=\Lambda_{r(\cdot)}(u)=\Lambda_{p(\cdot)/q}(u)$, the result follows.
\end{proof}

\begin{lemma}\label{lem:gauge-monotone-qp}
Assume \textnormal{(H2)} and let $q>1$. Then, for every
$u\in\mathscr{C}([0,1])$,
\[
 \Lambda_{p(\cdot)}(u)\le\Lambda_{q\,p(\cdot)}(u).
\]
\end{lemma}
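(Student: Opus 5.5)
The plan is to reduce this to Lemma~\ref{lem:Lpq_le_Lp_concave}, applied with the multiplied exponent $s(\cdot):=q\,p(\cdot)$ in place of $p(\cdot)$. Since $p(\cdot)=s(\cdot)/q$, that lemma would give exactly $\Lambda_{s(\cdot)/q}(u)\le\Lambda_{s(\cdot)}(u)$, which is the claim. The only point to check is that the proof of Lemma~\ref{lem:Lpq_le_Lp_concave} never uses $p^+<1$. Because $q\,p(t)$ may exceed $1$, I would rewrite that argument directly for the pair $p$ and $qp$ rather than cite it.

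If $u\equiv\bm 0$ both sides vanish by Lemma~\ref{lem:Luxemburg_basic_concave}. Otherwise set $C:=\Lambda_{p(\cdot)}(u)>0$ and $v:=|u|/C$. By the exponent-generic form of Lemma~\ref{lem:Luxemburg_basic_concave} (Remark~\ref{rem:exponent-generic}), the normalisation identity $\int_0^1 v(t)^{p(t)}\,dt=1$ holds; only $0<p^-\le p^+<+\infty$ is needed for this.

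Next I apply Jensen's inequality for the convex map $x\mapsto x^q$ on the probability space $[0,1]$:
\[
\int_0^1 v(t)^{q p(t)}\,dt=\int_0^1\bigl(v(t)^{p(t)}\bigr)^q\,dt\ge\Bigl(\int_0^1 v(t)^{p(t)}\,dt\Bigr)^q=1.
\]
This says that $S_{qp}(C)\ge1$, where $S_{qp}(\delta):=\int_0^1 (|u|/\delta)^{qp(t)}\,dt$.

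To conclude, I use the exponent-generic Lemma~\ref{lem:Luxemburg_basic_concave} again, now with $s=qp$, whose range satisfies $0<qp^-\le qp^+<+\infty$. It gives that $S_{qp}$ is strictly decreasing and that $S_{qp}\bigl(\Lambda_{qp(\cdot)}(u)\bigr)=1$. Together with $S_{qp}(C)\ge1=S_{qp}\bigl(\Lambda_{qp(\cdot)}(u)\bigr)$, strict monotonicity forces $C\le\Lambda_{qp(\cdot)}(u)$, as required.

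There is no real obstacle here. The only care needed is to confirm that the exponent-generic versions of the basic lemmas genuinely allow exponents above one, which Remark~\ref{rem:exponent-generic} states explicitly.
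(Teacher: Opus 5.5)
Your proposal is correct and is essentially the paper's own proof. You normalise at $C=\Lambda_{p(\cdot)}(u)$ so that $\int_0^1 (|u|/C)^{p(t)}\,dt=1$, apply Jensen's inequality to $x\mapsto x^q$ to get $\int_0^1(|u|/C)^{q\,p(t)}\,dt\ge1$, and conclude via the strict monotonicity in the exponent-generic form of Lemma~\ref{lem:Luxemburg_basic_concave} with $s(\cdot)=q\,p(\cdot)$. The paper also argues directly rather than citing Lemma~\ref{lem:Lpq_le_Lp_concave}, whose hypothesis \textnormal{(H2)} need not hold for $q\,p(\cdot)$, exactly as you noted.
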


\begin{proof}
If $u\equiv\bm{0}$, then both sides vanish. Otherwise set
$\Lambda_p:=\Lambda_{p(\cdot)}(u)>0$ and
\[
 F(t):=\left(\frac{|u(t)|}{\Lambda_p}\right)^{p(t)},
\]
so that $F(t)\ge0$ and, by
Lemma~\ref{lem:Luxemburg_basic_concave}, $\displaystyle\int_0^1 F(t)\,dt=1$. Since $q>1$,
Jensen's inequality applied to the convex map $x\mapsto x^q$ on the
unit-measure interval $[0,1]$ gives
\[
 \int_0^1\left(\frac{|u(t)|}{\Lambda_p}\right)^{q\,p(t)}dt
 =\int_0^1 F(t)^q\,dt
 \ge\left(\int_0^1 F(t)\,dt\right)^{q}=1.
\]
By Lemma~\ref{lem:Luxemburg_basic_concave},
applied with the exponent $s(\cdot)=q\,p(\cdot)$, the map
$\displaystyle\delta\mapsto\int_0^1(|u|/\delta)^{q\,p(t)}\,dt$ is strictly decreasing with
value $1$ at $\delta=\Lambda_{q\,p(\cdot)}(u)$. Since its value at
$\delta=\Lambda_p$ is at least $1$, it follows that
$\Lambda_p\le\Lambda_{q\,p(\cdot)}(u)$.
\end{proof}

\begin{remark}
Lemmata~\ref{lem:Lpq_le_Lp_concave} and
\ref{lem:gauge-monotone-qp} together give
\[
\Lambda_{\frac{p(\cdot)}{q}}(u)
\le
\Lambda_{p(\cdot)}(u)
\le
\Lambda_{q\,p(\cdot)}(u),
\qquad q>1,
\]
thereby recording the relative positions of the three gauges used in the existence
framework of the next section.
\end{remark}

\section{The Auxiliary Operator, Fixed Point Framework, and Existence Theory}

Because we will study \eqref{eq1.1} via a fixed point approach, we
use the same auxiliary-operator construction as in
\cite[(2.8)]{goodrich2025luxemburgnormlocalisationnonlocal}.  We recall it here for the sake of convenience.

Define the
original annular operator
\[
T:\overline{\widehat{V}_{\rho_2}}\setminus\widehat{V}_{\rho_1}
\rightarrow \mathscr{C}([0,1])
\]
by
\begin{equation}\label{eq:T-original}
(Tu)(t):=
\lambda\int_0^1
\left(A\left(\left(b*u^{p(\cdot)}\right)(1)\right)\right)^{-1}
G(t,s)f(s,u(s))\,ds.
\end{equation}
Writing
\[
\Phi(u):=\left(b*u^{p(\cdot)}\right)(1),
\]
we have
\[
\rho_1\le \Phi(u)\le\rho_2
\qquad
\text{for }u\in
\overline{\widehat{V}_{\rho_2}}\setminus\widehat{V}_{\rho_1}.
\]
Thus, \(\Phi(u)\in[\rho_1,\rho_2]\) on
\(\overline{\widehat{V}_{\rho_2}}\setminus\widehat{V}_{\rho_1}\) so that
\(A(\Phi(u))>0\) by \textnormal{(H1)}(3), and the coefficient
\(\big(A(\Phi(u))\big)^{-1}\) in \eqref{eq:T-original} is well defined.
Since the fixed point index requires an operator defined on all of
\(\overline{\widehat{V}_{\rho_2}}\), rather than only on the annulus
\(\overline{\widehat{V}_{\rho_2}}\setminus\widehat{V}_{\rho_1}\) where \(T\)
is available, we extend \(T\) by truncating the nonlocal quantity from below. For
\(u\in\overline{\widehat{V}_{\rho_2}}\), define
\[
\Phi_{\rho_1}(u):=\max\{\Phi(u),\rho_1\}.
\]
Since \(\Phi(u)\le\rho_2\) on \(\overline{\widehat{V}_{\rho_2}}\), it follows
that
\[
\Phi_{\rho_1}(u)\in[\rho_1,\rho_2]
\qquad
\text{for every }
u\in\overline{\widehat{V}_{\rho_2}}.
\]
Hence, \(A(\Phi_{\rho_1}(u))>0\) by \textnormal{(H1)}(3).

We now define the auxiliary operator $S:\overline{\widehat{V}_{\rho_2}}\rightarrow \mathscr{C}([0,1])$ by
\begin{equation}\label{eq:S-auxiliary}
(Su)(t):=
\lambda\int_0^1
\left(A\left(\Phi_{\rho_1}(u)\right)\right)^{-1}
G(t,s)f(s,u(s))\,ds.
\end{equation}
As in the cited construction, \(S\equiv T\) on
\(\overline{\widehat{V}_{\rho_2}}\setminus\widehat{V}_{\rho_1}\), since there we have that
\(\Phi(u)\ge\rho_1\), and so, \(\Phi_{\rho_1}(u)=\Phi(u)\). Consequently, any
fixed point of \(S\) lying in
\(\widehat{V}_{\rho_2}\setminus\overline{\widehat{V}_{\rho_1}}\) is simultaneously a fixed
point of \(T\).
Our next lemma records the compactness and cone-invariance properties needed
for the fixed point index argument.

\begin{lemma}\label{lem:S-compact}
Assume \textnormal{(H1)}, \textnormal{(H2)}, and \textnormal{(H3)}. Then the
auxiliary operator
\(S:\overline{\widehat{V}_{\rho_2}}\to\mathscr{C}([0,1])\) defined in
\eqref{eq:S-auxiliary} is completely continuous, satisfies
\(S(\overline{\widehat{V}_{\rho_2}})\subseteq\mathscr{K}_{\mathrm{hybrid}}\),
and coincides with \(T\) on
\(\overline{\widehat{V}_{\rho_2}}\setminus\widehat{V}_{\rho_1}\). In
particular, \(S\equiv T\) on
\(\partial\widehat{V}_{\rho_1}\cup\partial\widehat{V}_{\rho_2}\).
\end{lemma}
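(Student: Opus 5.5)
The plan is to treat the three assertions in turn: well-definedness and continuity of the coefficient, compactness of the integral part, and cone invariance.

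\emph{Coefficient.} For $u\in\overline{\widehat V_{\rho_2}}$ we have $\Phi_{\rho_1}(u)\in[\rho_1,\rho_2]$, as recorded before \eqref{eq:S-auxiliary}. Since $A$ is continuous and positive on the compact interval $[\rho_1,\rho_2]$ by (H1)(3), there is $a_0:=\min_{[\rho_1,\rho_2]}A>0$. Hence $0<(A(\Phi_{\rho_1}(u)))^{-1}\le a_0^{-1}$ uniformly on $\overline{\widehat V_{\rho_2}}$. The map $u\mapsto\Phi_{\rho_1}(u)=\max\{\Phi(u),\rho_1\}$ is $\|\cdot\|_\infty$-continuous by Lemma~\ref{lem:Phi-continuity-concave}, so $u\mapsto (A(\Phi_{\rho_1}(u)))^{-1}$ is continuous as well.

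\emph{Complete continuity.} By Corollary~\ref{cor:Vrho-bounded}, $\overline{\widehat V_{\rho_2}}$ is bounded in $\|\cdot\|_\infty$, say by $B:=B^{(q)}_{\infty,\rho_2}$. I assume here the standing hypothesis $b^{1/(1-q)}\in L^1$; otherwise one works on bounded subsets directly. Then $0\le f(s,u(s))\le f^M_{[0,1]\times[0,B]}$ for all such $u$. Write $Su=c(u)\,Hu$, where $c(u)=\lambda(A(\Phi_{\rho_1}(u)))^{-1}$ and $(Hu)(t)=\int_0^1G(t,s)f(s,u(s))\,ds$. The operator $H$ is the standard Hammerstein operator, and it is completely continuous by Arzel\`a--Ascoli:
\begin{itemize}
\item uniform boundedness follows from $\|Hu\|_\infty\le f^M\int_0^1\mathscr G(s)\,ds$;
\item equicontinuity follows from the uniform continuity of $G$ on $[0,1]^2$, via $|(Hu)(t_1)-(Hu)(t_2)|\le f^M\sup_s|G(t_1,s)-G(t_2,s)|$;
\item continuity of $H$ follows from uniform continuity of $f$ on $[0,1]\times[0,B]$.
\end{itemize}
Multiplication by the bounded continuous scalar $c(u)$ preserves continuity and relative compactness. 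Indeed, the image lies in $\{\mu w:\mu\in[0,\lambda/a_0],\ w\in \overline{H(\cdot)}\}$, which is compact as the continuous image of a product of compacta.

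\emph{Cone invariance.} This is the step I would check most carefully. Since $c(u)>0$ and $f,G\ge0$, we get $Su\ge0$. Using (H3)(1), for $t\in[\alpha,\beta]$,
\[
(Su)(t)\ge c(u)\int_0^1\eta_0\mathscr G(s)f(s,u(s))\,ds\ge\eta_0\sup_{\tau\in[0,1]}c(u)\int_0^1G(\tau,s)f(s,u(s))\,ds=\eta_0\|Su\|_\infty .
\]
So $Su$ satisfies \eqref{eq:thickness}. Then Theorem~\ref{thm:coercivity-calibration} (equivalently Theorem~\ref{thm:cone-promotion}(i)) gives $(\bm 1*Su)(1)\ge\widetilde C_0\Lambda_{p(\cdot)}(Su)$. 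Hence $Su\in\mathscr K_{\mathrm{hybrid}}$. Note that the Luxemburg condition need not be verified directly via $C_0$; thickness alone suffices.

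\emph{Coincidence with $T$.} On $\overline{\widehat V_{\rho_2}}\setminus\widehat V_{\rho_1}$ we have $\Phi(u)\ge\rho_1$, since $u\notin\widehat V_{\rho_1}$ means $\Phi(u)\ge\rho_1$. So $\Phi_{\rho_1}(u)=\Phi(u)$ and therefore $S u=Tu$. By Lemma~\ref{lem:boundary-characterisation-concave}, the boundaries $\partial\widehat V_{\rho_i}=\{\Phi=\rho_i\}$ lie in this annulus. For $i=2$ this uses the fact that closures are taken inside the closed cone, so $\partial\widehat V_{\rho_2}\subseteq\overline{\widehat V_{\rho_2}}$. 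This gives $S\equiv T$ there.

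The main obstacle I expect is the boundedness needed for compactness. Corollary~\ref{cor:Vrho-bounded} requires the integrability hypothesis on $b^{1/(1-q)}$, which is not listed among (H1)--(H3), so I would either invoke it as standing or note that it is needed.
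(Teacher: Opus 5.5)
Your proposal is correct and follows the paper's route. Boundedness of $\widehat V_{\rho_2}$ comes from Corollary~\ref{cor:Vrho-bounded} and feeds a standard Arzel\`a--Ascoli argument, and cone invariance comes from the \textnormal{(H3)} thickness estimate followed by the calibrated coercivity of Theorem~\ref{thm:coercivity-calibration}. You are right that the paper's proof also silently needs $b^{1/(1-q)}\in L^1((0,1])$ through that corollary, but the hypothesis is dispensable here: thickness and $b>0$ a.e.\ give $\Phi(u)\ge\min\{(\eta_0\|u\|_\infty)^{p^-},(\eta_0\|u\|_\infty)^{p^+}\}\int_\alpha^\beta b(1-s)\,ds$ with a positive integral, which bounds $\widehat V_{\rho_2}$ directly under \textnormal{(H1)}--\textnormal{(H3)}.
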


\begin{proof}
The proof follows the argument of
\cite[Lemma~2.22]{goodrich2025luxemburgnormlocalisationnonlocal}; relative to
that proof, the present setting differs in only two ingredients. First, the
boundedness of \(\widehat{V}_{\rho_2}\) is supplied here by
Corollary~\ref{cor:Vrho-bounded}. Second, at the cone-invariance step the
Green-function estimates of \textnormal{(H3)} still give \(Su\ge0\) and
\eqref{eq:thickness}, but where the convex argument concludes with
\((\bm{1}*Su)(1)\ge C_0\|Su\|_{L^{p(\cdot)}}\), the concave regime instead
uses the calibrated coercivity
\((\bm{1}*Su)(1)\ge\widetilde C_0\,\Lambda_{p(\cdot)}(Su)\), which holds for
every nonnegative function satisfying \eqref{eq:thickness} by
Theorem~\ref{thm:coercivity-calibration}. Hence
\(Su\in\mathscr{K}_{\mathrm{hybrid}}\). The remaining assertions, including \(S\equiv T\) on the annulus and hence on
\(\partial\widehat{V}_{\rho_1}\cup\partial\widehat{V}_{\rho_2}\), are
unchanged from the cited proof.
\end{proof}

Finally, we state the topological fixed point theorem
\cite[Lemma~2.1]{cianciaruso1} used below, in which the map (here written
\(\mathcal{T}\) to avoid conflict with the operator \(T\) of
\eqref{eq:T-original}) denotes an arbitrary compact self-map of the cone.

\begin{lemma}\label{lem:fixed-point-index}
Let $U$ be a bounded open set and, with $\mathscr{K}$ a cone in a real Banach
space $\mathscr{X}$, suppose both that
$U_{\mathscr{K}}:=U\cap\mathscr{K}\supseteq\{\bm{0}\}$ and that
$\overline{U_{\mathscr{K}}}\neq\mathscr{K}$. Assume that
$\mathcal{T}:\overline{U_{\mathscr{K}}}\to\mathscr{K}$ is a compact map such
that $x\neq\mathcal{T}x$ for each $x\in\partial U_{\mathscr{K}}$. Then the
fixed point index $i_{\mathscr{K}}\!\left(\mathcal{T},U_{\mathscr{K}}\right)$
has the following properties.
\begin{enumerate}
\item If there exists $v\in\mathscr{K}\setminus\{\bm{0}\}$ such that
$x\neq\mathcal{T}x+\lambda v$ for each $x\in\partial U_{\mathscr{K}}$ and each
$\lambda>0$, then $i_{\mathscr{K}}\!\left(\mathcal{T},U_{\mathscr{K}}\right)=0$.
\item If $\mu x\neq\mathcal{T}x$ for each $x\in\partial U_{\mathscr{K}}$ and
for each $\mu\ge1$, then
$i_{\mathscr{K}}\!\left(\mathcal{T},U_{\mathscr{K}}\right)=1$.
\item If $i_{\mathscr{K}}\!\left(\mathcal{T},U_{\mathscr{K}}\right)\neq0$, then
$\mathcal{T}$ has a fixed point in $U_{\mathscr{K}}$.
\item Let $U^1$ be open in $\mathscr{X}$ with
$\overline{U_{\mathscr{K}}^1}\subseteq U_{\mathscr{K}}$. If
$i_{\mathscr{K}}\!\left(\mathcal{T},U_{\mathscr{K}}\right)=1$ and
$i_{\mathscr{K}}\!\left(\mathcal{T},U_{\mathscr{K}}^{1}\right)=0$, then
$\mathcal{T}$ has a fixed point in
$U_{\mathscr{K}}\setminus\overline{U_{\mathscr{K}}^{1}}$. The same result
holds if $i_{\mathscr{K}}\!\left(\mathcal{T},U_{\mathscr{K}}\right)=0$ and
$i_{\mathscr{K}}\!\left(\mathcal{T},U_{\mathscr{K}}^{1}\right)=1$.
\end{enumerate}
\end{lemma}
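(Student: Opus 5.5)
The plan is to derive all four assertions from the standard axiomatic properties of the fixed point index for compact maps on relatively open subsets of a closed cone (a retract of $\mathscr{X}$): normalisation, additivity/excision, homotopy invariance and the solution property. I would recall the construction first. By Dugundji's theorem there is a retraction $r\colon\mathscr{X}\to\mathscr{K}$, and one sets $i_{\mathscr{K}}(\mathcal{T},U_{\mathscr{K}}):=\deg_{LS}\bigl(I-\mathcal{T}\circ r,\ r^{-1}(U_{\mathscr{K}})\cap B,\ \bm 0\bigr)$ for a large ball $B$. The hypothesis that $x\neq\mathcal{T}x$ on $\partial U_{\mathscr{K}}$ is exactly what makes this well defined. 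The listed properties then follow from those of the Leray--Schauder degree, and I would simply invoke them, as in \cite{cianciaruso1}.

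Part (3) is the solution property: a nonzero degree forces a zero of $I-\mathcal{T}\circ r$ in $r^{-1}(U_{\mathscr{K}})$. Such a zero lies in $\mathscr{K}$, since it equals $\mathcal{T}(rx)\in\mathscr{K}$, so $r$ fixes it and it is a fixed point of $\mathcal{T}$ in $U_{\mathscr{K}}$.

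For part (2) I would use the homotopy $H(x,t):=t\,\mathcal{T}x$ for $t\in[0,1]$, which is compact into $\mathscr{K}$. I would check that it has no fixed points on $\partial U_{\mathscr{K}}$. If $x=t\mathcal{T}x$ with $t\in(0,1]$, then $\mu x=\mathcal{T}x$ with $\mu=1/t\ge1$, which is excluded. If $t=0$, then $x=\bm 0$, and $\bm 0\in U_{\mathscr{K}}$ (relatively open) is not a boundary point. Homotopy invariance then gives $i_{\mathscr{K}}(\mathcal{T},U_{\mathscr{K}})=i_{\mathscr{K}}(\bm 0,U_{\mathscr{K}})$, and normalisation gives the value $1$ because the constant $\bm 0$ lies in $U_{\mathscr{K}}$.

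For part (1) I would use $H(x,t):=\mathcal{T}x+t\lambda_0 v$ for $t\in[0,1]$, with $\lambda_0>0$ chosen large. By hypothesis there are no fixed points on $\partial U_{\mathscr{K}}$ for $t>0$, and none for $t=0$ by the standing assumption. Since $U$ is bounded and $\mathcal{T}$ is compact, the quantity $R:=\sup_{x\in\overline{U_{\mathscr{K}}}}\bigl(\|x\|+\|\mathcal{T}x\|\bigr)$ is finite. Taking $\lambda_0\|v\|>R$ makes $x=\mathcal{T}x+\lambda_0 v$ impossible on all of $\overline{U_{\mathscr{K}}}$, so the solution property gives $i_{\mathscr{K}}(\mathcal{T}+\lambda_0v,U_{\mathscr{K}})=0$. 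Homotopy invariance then gives $i_{\mathscr{K}}(\mathcal{T},U_{\mathscr{K}})=0$. I expect this to be the main obstacle, in that one must really use boundedness of $U$ and of $\mathcal{T}(\overline{U_{\mathscr{K}}})$ together with $v\neq\bm 0$. The hypothesis $\overline{U_{\mathscr{K}}}\neq\mathscr{K}$ is what prevents a contradiction with normalisation when $\bm 0\in U_{\mathscr{K}}$.

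For part (4) I would apply additivity to the disjoint relatively open sets $U^1_{\mathscr{K}}$ and $U_{\mathscr{K}}\setminus\overline{U^1_{\mathscr{K}}}$; implicit here is that $\mathcal{T}$ has no fixed points on $\partial U^1_{\mathscr{K}}$, so that both indices are defined. This gives
\[
i_{\mathscr{K}}\bigl(\mathcal{T},U_{\mathscr{K}}\setminus\overline{U^1_{\mathscr{K}}}\bigr)=i_{\mathscr{K}}(\mathcal{T},U_{\mathscr{K}})-i_{\mathscr{K}}(\mathcal{T},U^1_{\mathscr{K}})=\pm1\neq0.
\]
Part (3) applied to this difference set then yields the asserted fixed point in $U_{\mathscr{K}}\setminus\overline{U^1_{\mathscr{K}}}$.
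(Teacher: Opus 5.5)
The paper gives no proof of this lemma; it quotes it verbatim from \cite[Lemma~2.1]{cianciaruso1} and uses it as a black box. Your proposal therefore supplies an argument the paper does not contain, and it is correct. It is the standard textbook derivation. You define $i_{\mathscr K}$ through the Leray--Schauder degree of $I-\mathcal T\circ r$ for a Dugundji retraction $r$, and read off (3) from the solution property. You obtain (2) and (1) from the homotopies $t\,\mathcal Tx$ and $\mathcal Tx+t\lambda_0v$ with $\lambda_0\|v\|>\sup_{x\in\overline{U_{\mathscr K}}}(\|x\|+\|\mathcal Tx\|)$, and (4) from additivity. Compared with the bare citation, your route shows which hypotheses each part actually uses:
\begin{itemize}
\item $\bm 0\in U_{\mathscr K}$ is used only in (2);
\item boundedness of $U$ and of $\mathcal T(\overline{U_{\mathscr K}})$, together with $v\neq\bm 0$, is used only in (1);
\item (4) silently requires $x\neq\mathcal Tx$ on $\partial U^1_{\mathscr K}$ so that $i_{\mathscr K}(\mathcal T,U^1_{\mathscr K})$ is defined. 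You rightly make this explicit.
\end{itemize}

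There are two small inaccuracies, neither of which is a gap.

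First, your closing remark in (1) misattributes the role of $\overline{U_{\mathscr K}}\neq\mathscr K$. Since $U$ is bounded and $\mathscr K$ is invariant under positive scaling, $\overline{U_{\mathscr K}}=\mathscr K$ can only occur when $\mathscr K=\{\bm 0\}$. In that case no admissible $v$ exists and (1) is vacuous. So this hypothesis merely excludes the degenerate cone and plays no part in your argument for (1). That argument rests entirely on the boundedness you already identified.

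Second, in (4) the set $W:=U_{\mathscr K}\setminus\overline{U^1_{\mathscr K}}$ typically does not contain $\bm 0$, for instance whenever $\bm 0\in U^1_{\mathscr K}$, as in the paper's application. So you cannot literally invoke part (3) as stated, since it carries the standing hypothesis $\{\bm 0\}\subseteq U_{\mathscr K}$. Instead, apply the general solution property to $W=(U\setminus\overline{U^1_{\mathscr K}})\cap\mathscr K$; your degree argument for (3) proves that property without using $\bm 0$. Here $U\setminus\overline{U^1_{\mathscr K}}$ is open and bounded in $\mathscr X$. The relative boundary of $W$ lies in $\partial U_{\mathscr K}\cup\partial U^1_{\mathscr K}$ and is therefore fixed-point free.
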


With the necessary preliminaries in place, we now prepare the notation for the
existence theorem for \eqref{eq1.1}, subject to the boundary data encoded by
the Green's function \(G\). For \(\rho>0\), recall the upper-localisation
quantity
\[
 B_{\infty,\rho}^{(q)}
 := \frac{1}{\eta_0(\beta-\alpha)^{q/p^-}}\,M_{p/q}(\rho),
\]
where \(M_{p/q}(\rho)\) is the divided-exponent upper-bound quantity from
Lemma~\ref{lem:reverse-holder-divided-exponent}. We also make the following declarations for future reference.
\begin{align*}
m_{\rho}
&:= \left[
     \left(\frac{\rho}{(b*\bm{1})(1)}\right)^{\frac{1}{p^+}}
     + \varepsilon_1(\rho,b)
   \right] \\[6pt]
M_{\rho}
&:= B_{\infty,\rho}^{(q)} \\[6pt]
C_2(q)
&:= \left(
     \int_0^1 |b(1-s)|^{\frac{q}{q-1}}\,ds
   \right)^{\frac{q-1}{q}} \\[8pt]
{N_1}^{(q)}
&{:= \frac{\lambda}{A(\rho_1)}\,
  f^{m}_{[\alpha,\beta]\times[\eta_0 m_{\rho_1},\,B_{\infty,\rho_1}^{(q)}]}\,
  \inf_{t\in[\alpha,\beta]}
  \int_{\alpha}^{\beta} G(t,s)\,ds} \\[8pt]
{Y_1}^{(q)}
&{:= N_1^{(q)}(\beta-\alpha)^{\frac{q}{p^-}}}
\end{align*}
Note that \(\varepsilon_1(\rho,b)\), which occurs in the definition of $m_{\rho}$, is the piecewise quantity defined in
Lemma~\ref{lem:Lp_lower_combined}. The quantities \(m_\rho\), \(M_\rho\), \(N_1^{(q)}\), \(Y_1^{(q)}\) and
\(C_2(q)\) will be used in the existence theorem below.

We now state and prove our existence theorem.  Since in Section 2 we have established a functional analytic framework analogous to the convex setting studied in \cite{goodrich2025luxemburgnormlocalisationnonlocal}, it is unsurprising that the proof of Theorem \ref{thm:existence-concave} does not differ much from the analogous result \cite[Theorem 2.25]{goodrich2025luxemburgnormlocalisationnonlocal} in the convex setting.  Nonetheless, we largely include the proof of Theorem \ref{thm:existence-concave} both for completeness and to highlight the small discrepancies that do arise.

Throughout the following theorem and its proof, the auxiliary exponents
$p(\cdot)/q$ and $q\,p(\cdot)$ occur alongside the base exponent $p(\cdot)$.
The cone-restricted quasi-norm notation of
Theorem~\ref{thm:cone-promotion}(iii) is defined only at the base exponent on
$\mathscr K_{\mathrm{hybrid}}$, and so, we write every modular-induced scaling
functional as a gauge $\Lambda_{s(\cdot)}$ during the index computations,
reverting to $\|\cdot\|_{L^{p(\cdot)}}=\Lambda_{p(\cdot)}(\cdot)$ only for the
final localisation of $u_0$.

\begin{theorem}\label{thm:existence-concave}
Suppose that \textnormal{(H1)}, \textnormal{(H2)}, and \textnormal{(H3)}
hold, and that
\[
 b^{\frac{1}{1-q}}\in L^1((0,1])
 \qquad\text{and}\qquad
 b\in L^{\frac{q}{q-1}}((0,1])
\]
for some \(q>1\). In addition, suppose that the numbers \(\rho_1\) and
\(\rho_2\) from \textnormal{(H1)(3)} are such that each of the following
inequalities holds.
\vskip0.2cm
\begin{enumerate}
\item
\begin{equation}\label{eq:theorem_condition1_concave}
{
\begin{cases}
C_1(q)\,\left(Y_1^{(q)}\right)^{p^-}>\rho_1, & \text{if } \dfrac{\rho_1}{C_1(q)}\ge 1, \\[6pt]
C_1(q)\,\left(Y_1^{(q)}\right)^{p^+}>\rho_1, & \text{if } 0<\dfrac{\rho_1}{C_1(q)}<1.
\end{cases}
}
\end{equation}
\vskip0.2cm
\item
\begin{equation}\label{eq:theorem_condition2_concave}
{
\begin{cases}
C_2(q)\left(
 \left[\dfrac{\lambda\,f^{M}_{[0,1]\times[0,B_{\infty,\rho_2}^{(q)}]}}{A(\rho_2)}\right]
 \Lambda_{q\,p(\cdot)}\!\left(\int_0^1 G(\cdot,s)\,ds\right)
\right)^{p^+} <\rho_2,
& \text{if } \dfrac{\rho_2}{C_2(q)}\ge 1, \\[10pt]
C_2(q)\left(
 \left[\dfrac{\lambda\,f^{M}_{[0,1]\times[0,B_{\infty,\rho_2}^{(q)}]}}{A(\rho_2)}\right]
 \Lambda_{q\,p(\cdot)}\!\left(\int_0^1 G(\cdot,s)\,ds\right)
\right)^{p^-} <\rho_2,
& \text{if } 0<\dfrac{\rho_2}{C_2(q)}<1.
\end{cases}
}
\end{equation}
\end{enumerate}
\vskip0.2cm
Then problem \eqref{eq1.1}, subject to the boundary data generated by the
Green's function \(G\), has at least one positive solution
\[
 u_0\in\widehat V_{\rho_2}\setminus\overline{\widehat V_{\rho_1}}.
\]
Moreover, \(u_0\) satisfies the Luxemburg gauge localisation
\begin{equation}
 \eta_0(\beta-\alpha)^{\frac{1}{p^-}}m_{\rho_1}
 \le \|u_0\|_{L^{p(\cdot)}}
 \le M_{\rho_2}.
\end{equation}
\end{theorem}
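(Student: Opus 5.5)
We use the fixed point index on the cone $\mathscr K_{\mathrm{hybrid}}$ (Theorem~\ref{thm:cone-promotion}), applied to the auxiliary operator $S$ of \eqref{eq:S-auxiliary}. By Lemma~\ref{lem:S-compact}, $S$ is completely continuous, maps $\overline{\widehat V_{\rho_2}}$ into $\mathscr K_{\mathrm{hybrid}}$, and agrees with $T$ on $\partial\widehat V_{\rho_1}\cup\partial\widehat V_{\rho_2}$. By Corollary~\ref{cor:Vrho-bounded}, both $\widehat V_{\rho_1}$ and $\widehat V_{\rho_2}$ are bounded relatively open sets. They contain $\bm 0$, since $\Phi(\bm 0)=0$, and $\overline{\widehat V_{\rho_1}}\subseteq\widehat V_{\rho_2}$ because $\rho_1<\rho_2$ and $\Phi$ is continuous (Lemma~\ref{lem:boundary-characterisation-concave}). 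We will show that $i_{\mathscr K}(S,\widehat V_{\rho_1})=0$ and $i_{\mathscr K}(S,\widehat V_{\rho_2})=1$. Lemma~\ref{lem:fixed-point-index}(4) then gives a fixed point $u_0\in\widehat V_{\rho_2}\setminus\overline{\widehat V_{\rho_1}}$ of $S$, hence of $T$, and hence a positive solution.

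\textbf{Index $0$ on $\widehat V_{\rho_1}$.} Take $v:=\bm 1\in\mathscr K_{\mathrm{hybrid}}$ and suppose, for contradiction, that $u=Su+\mu v$ for some $u\in\partial\widehat V_{\rho_1}$ and $\mu>0$. On $\partial\widehat V_{\rho_1}$ we have $\Phi(u)=\rho_1$, so the coefficient is $A(\rho_1)$. For $s\in[\alpha,\beta]$, the thickness condition together with Lemma~\ref{lem:Lp_lower_combined} (via $\|u\|_\infty\ge m_{\rho_1}$) and Lemma~\ref{lem:upper-localisation-divided-exponent} places $u(s)$ in $[\eta_0 m_{\rho_1},B^{(q)}_{\infty,\rho_1}]$. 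Hence, for $t\in[\alpha,\beta]$,
\[
u(t)\ge (Su)(t)+\mu> N_1^{(q)}.
\]
Since $u$ exceeds $N_1^{(q)}$ on $[\alpha,\beta]$, the argument of Lemma~\ref{lem:thickness-divided-exponent} gives $\Lambda_{p(\cdot)/q}(u)>N_1^{(q)}(\beta-\alpha)^{q/p^-}=Y_1^{(q)}$. On the other hand, Lemma~\ref{lem:reverse-holder-divided-exponent} gives $\Lambda_{p(\cdot)/q}(u)\le M_{p/q}(\rho_1)$. In each case of \eqref{eq:theorem_condition1_concave}, inverting $M_{p/q}$ turns $Y_1^{(q)}<M_{p/q}(\rho_1)$ into $C_1(q)(Y_1^{(q)})^{p^\mp}<\rho_1$, which contradicts the hypothesis.

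\textbf{Index $1$ on $\widehat V_{\rho_2}$.} Suppose $\mu u=Su$ for some $u\in\partial\widehat V_{\rho_2}$ and $\mu\ge1$. Then $u\le Su$ pointwise. Using $\|u\|_\infty\le B^{(q)}_{\infty,\rho_2}$ from Lemma~\ref{lem:upper-localisation-divided-exponent}, we get
\[
Su\le \frac{\lambda f^M}{A(\rho_2)}\int_0^1G(\cdot,s)\,ds=:\kappa\,g .
\]
By H\"older's inequality with exponents $q$ and $q/(q-1)$,
\[
\rho_2=\Phi(u)\le \int_0^1 b(1-s)(\kappa g(s))^{p(s)}\,ds\le C_2(q)\,I_{qp(\cdot)}(\kappa g)^{1/q}.
\]
Corollary~\ref{cor:mod_scaling_concave}, in its exponent-generic form with $s=qp$, bounds $I_{qp(\cdot)}(\kappa g)^{1/q}$ by $\Lambda_{qp(\cdot)}(\kappa g)^{p^\pm}$, with the exponent chosen according to whether the gauge is $\ge1$ or $<1$. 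Combined with Lemma~\ref{lem:gauge-lattice-monotone}, this gives $\rho_2\le C_2(q)(\kappa\Lambda_{qp(\cdot)}(g))^{p^\pm}$, contradicting \eqref{eq:theorem_condition2_concave}.

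\textbf{Localisation.} The localisation of $u_0$ follows by rescaling $u_0$ onto the boundaries, as in the proof of Corollary~\ref{cor:Vrho-bounded}. Scaling $u_0$ up onto $\partial\widehat V_{\rho_2}$ and using Lemma~\ref{lem:gauge-le-sup} together with Lemma~\ref{lem:upper-localisation-divided-exponent} gives $\|u_0\|_{L^{p(\cdot)}}\le M_{\rho_2}$. Scaling $u_0$ down onto $\partial\widehat V_{\rho_1}$ and applying Lemma~\ref{lem:Lp_lower_combined} gives the lower bound.

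\textbf{Main obstacle.} The delicate step is the case bookkeeping in the two index computations. The case split in each condition is keyed to $\rho_i/C_i(q)$, but the natural splits come from the sizes of the gauges $\Lambda_{p(\cdot)/q}(u)$ and $\Lambda_{qp(\cdot)}(\kappa g)$. Reconciling the two, for instance by showing that the gauge is $\ge1$ exactly when the corresponding ratio regime forces it, or else by taking the weaker exponent in each case, is where I expect the real work to lie.
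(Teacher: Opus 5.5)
Your proof is correct in structure, and at the inner boundary it takes a different and somewhat cleaner route than the paper.

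\textbf{Inner boundary.} The paper works with $Su$. From $u\ge Su$ it gets $\rho_1\ge\Phi(Su)\ge C_1(q)\bigl(I_{p(\cdot)/q}(Su)\bigr)^q$ by reverse H\"older. It then shows $\Lambda_{p(\cdot)/q}(Su)\ge Y_1^{(q)}$ from $Su\ge N_1^{(q)}$ on $[\alpha,\beta]$. Finally it reruns the modular--gauge comparison in two cases, using the hypothesis to place the gauge on the correct side of $1$: $Y_1^{(q)}>1$ when $\rho_1\ge C_1(q)$, and $I_{p(\cdot)/q}(Su)<1$ when $\rho_1<C_1(q)$.

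You work with $u$ instead. The bound $u\ge N_1^{(q)}+\mu$ on $[\alpha,\beta]$ gives $\Lambda_{p(\cdot)/q}(u)>Y_1^{(q)}$. Lemma~\ref{lem:reverse-holder-divided-exponent}, applied to $u\in\partial\widehat V_{\rho_1}$, already gives $\Lambda_{p(\cdot)/q}(u)\le M_{p/q}(\rho_1)$. Its case split is keyed to $\rho_1/C_1(q)$, which is exactly the split in \eqref{eq:theorem_condition1_concave}. So the obstacle you anticipate never arises in the index-$0$ step: the case bookkeeping is already packaged in $M_{p/q}$, and you reuse a proved lemma rather than redoing its proof for $Su$.

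\textbf{Outer boundary and localisation.} Comparing $u$ directly with the majorant $\kappa g$, so that only homogeneity of $\Lambda_{q\,p(\cdot)}$ is needed, is equivalent to the paper's route. The paper goes through $Su$ and uses monotonicity, $\Lambda_{q\,p(\cdot)}(Su)\le\kappa\,\Lambda_{q\,p(\cdot)}(g)$. Your localisation step is the same as the paper's.

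\textbf{The step you leave open.} What remains is the case matching at $\rho_2$. The paper does not write this out either; it refers to its convex companion. The argument is short. Put $X:=\kappa\,\Lambda_{q\,p(\cdot)}(g)$. The two cases of Corollary~\ref{cor:mod_scaling_concave}, in exponent-generic form with $s=q\,p$, combine into
\[
\rho_2\le C_2(q)\max\bigl\{X^{p^-},X^{p^+}\bigr\}.
\]
Suppose first that $\rho_2\ge C_2(q)$. Then $X<1$ is impossible, since it would give $\rho_2\le C_2(q)X^{p^-}<C_2(q)\le\rho_2$. Hence $X\ge1$, and the first line of \eqref{eq:theorem_condition2_concave} gives $\rho_2\le C_2(q)X^{p^+}<\rho_2$.

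Suppose instead that $\rho_2<C_2(q)$. The second line of \eqref{eq:theorem_condition2_concave} gives $C_2(q)X^{p^-}<\rho_2<C_2(q)$, so $X<1$. Then $\rho_2\le C_2(q)X^{p^-}<\rho_2$.

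In either regime the hypothesis itself places the gauge on the correct side of $1$. With these lines added, your argument is complete.
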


\begin{proof}
As mentioned before the statement of the theorem, because of the machinery built in Section 2, the proof largely follows that of \cite[Theorem 2.25]{goodrich2025luxemburgnormlocalisationnonlocal} in the convex setting.  So, in places where this overlap is complete, we will omit the details and refer to \cite[Theorem 2.25]{goodrich2025luxemburgnormlocalisationnonlocal}. 

Our first task is to invoke part~(1) of Lemma~\ref{lem:fixed-point-index}.  Suppose for contradiction that there exist \(u\in\partial\widehat V_{\rho_1}\)
and \(\mu>0\) such that
\[
 u=Su+\mu\bm 1.
\]
Since \(u\in\partial\widehat V_{\rho_1}\),
Lemma~\ref{lem:boundary-characterisation-concave} gives
\[
 \Phi(u)
 =\left(b*u^{p(\cdot)}\right)(1)
 =\int_0^1 b(1-t)u(t)^{p(t)}\,dt
 =\rho_1.
\]
Moreover, for each \(t\in[0,1]\),
\[
 u(t)=(Su)(t)+\mu.
\]
Since \(b\ge0\) a.e., we obtain
\begin{equation}\label{eq:inner-power-comparison}
 b(1-t)u(t)^{p(t)} \ge b(1-t)(Su)(t)^{p(t)}
 \qquad\text{for a.e. }t\in[0,1].
\end{equation}
Integrating \eqref{eq:inner-power-comparison} over \([0,1]\) gives
\begin{equation}\label{eq:inner-functional-comparison}
\rho_1=\left(b*u^{p(\cdot)}\right)(1) \ge \left(b*(Su)^{p(\cdot)}\right)(1),
\end{equation}
and so,
\begin{equation}\label{eq:rho1_ge_Su_functional}
 \rho_1 \ge \left(b*(Su)^{p(\cdot)}\right)(1).
\end{equation}
By the reverse H\"older inequality,
\begin{equation}\label{eq:inner-reverse-holder}
\begin{split}
\left(b*(Su)^{p(\cdot)}\right)(1)
&= \int_0^1 b(1-t)(Su)(t)^{p(t)}\,dt\\
&= \int_0^1 b(1-t)\left((Su)(t)^{p(t)/q}\right)^q\,dt\\
&\ge \left(\int_0^1 b(1-t)^{\frac{1}{1-q}}\,dt\right)^{1-q}
    \left(\int_0^1 (Su)(t)^{p(t)/q}\,dt\right)^q\\
&= C_1(q)\left(I_{\frac{p(\cdot)}{q}}(Su)\right)^q.
\end{split}
\end{equation}
Combining \eqref{eq:rho1_ge_Su_functional} with
\eqref{eq:inner-reverse-holder}, we obtain
\begin{equation}\label{eq:rho1_ge_C1_I_r}
 \rho_1 \ge C_1(q)\left(I_{\frac{p(\cdot)}{q}}(Su)\right)^q.
\end{equation}

We now show that \eqref{eq:rho1_ge_C1_I_r}, together with the hypotheses in the
statement of the theorem, leads to a contradiction. To begin, recall the
notation introduced prior to the statement of the theorem. Since \(u\in\partial\widehat V_{\rho_1}\), \cite[Lemma~3.3]{goodrich2025unified} and Lemma~\ref{lem:upper-localisation-divided-exponent} give
\[
 \|u\|_\infty\ge m_{\rho_1}
 \qquad\text{and}\qquad
 \|u\|_\infty\le B_{\infty,\rho_1}^{(q)}.
\]
Moreover, since \(u\in\mathscr K_{\mathrm{hybrid}}\), inequality \eqref{eq:thickness}
implies
\[
 u(s)\ge \eta_0\|u\|_\infty\ge \eta_0 m_{\rho_1}
 \qquad\text{for all }s\in[\alpha,\beta].
\]
Consequently,
\[
 u(s)\in \left[\eta_0 m_{\rho_1},\, B_{\infty,\rho_1}^{(q)}\right]
 \qquad\text{for all }s\in[\alpha,\beta].
\]
Therefore, for $u\in\partial\widehat{V}_{\rho_1}$ and
\(t\in[\alpha,\beta]\), we have
\begin{equation}\label{eq:Su-min-integral-concave}
\begin{split}
(Su)(t)
&= \frac{\lambda}{A(\rho_1)} \int_0^1 G(t,s)f(s,u(s))\,ds\\
&\ge \frac{\lambda}{A(\rho_1)}\,
    f^m_{[\alpha,\beta]\times[\eta_0 m_{\rho_1},\,B_{\infty,\rho_1}^{(q)}]}\,
    \inf_{\tau\in[\alpha,\beta]} \int_\alpha^\beta G(\tau,s)\,ds= N_1^{(q)}.
\end{split}
\end{equation}
Note that in \eqref{eq:Su-min-integral-concave} we have used the fact that, for
each \(s\in[\alpha,\beta]\),
\[
 \eta_0 m_{\rho_1}
 =\eta_0\left[
    \left(\frac{\rho_1}{(b*\bm 1)(1)}\right)^{\frac{1}{p^+}}
    + \varepsilon_1(\rho_1,b)
  \right]
 \le \eta_0\|u\|_\infty
 \le u(s)
 \le \|u\|_\infty
 \le B_{\infty,\rho_1}^{(q)}.
\]

We next use \eqref{eq:Su-min-integral-concave} to obtain a lower bound for the
Luxemburg gauge of \(Su\) at the divided exponent. Put
\[
 r(t):=\frac{p(t)}{q}.
\]
By \eqref{eq:Su-min-integral-concave}, we have
\[
 (Su)(t)\ge N_1^{(q)} \qquad\text{for all }t\in[\alpha,\beta].
\]
We claim that
\[
 \Lambda_{r(\cdot)}(Su) \ge N_1^{(q)}(\beta-\alpha)^{1/r^-}.
\]
Then repeating exactly \cite[pp. 22--23]{goodrich2025luxemburgnormlocalisationnonlocal}, we suppose that $0<k<N_1^{(q)}(\beta-\alpha)^{1/r^-}$, and are led to the conclusion that no such \(k\) can belong to the admissible set
\[
 \left\{ k>0: \int_0^1 \left(\frac{|(Su)(t)|}{k}\right)^{r(t)}\,dt \le 1 \right\}.
\]
Taking the infimum over this admissible set gives
\begin{equation}\label{eq:lower-gauge-bound-divided}
 \Lambda_{r(\cdot)}(Su)
 =\Lambda_{\frac{p(\cdot)}{q}}(Su)
 \ge N_1^{(q)}(\beta-\alpha)^{1/r^-}
 = N_1^{(q)}(\beta-\alpha)^{q/p^-}
 = Y_1^{(q)}.
\end{equation}

We now distinguish between two different regimes for the ratio $\rho_1/C_1(q)$,
since the two cases of Corollary~\ref{cor:mod_scaling_concave} apply depending
on whether $\Lambda_{\frac{p(\cdot)}{q}}(Su)$ lies above or below $1$.
Specifically, recall from \eqref{eq:theorem_condition1_concave} and
\eqref{eq:rho1_ge_C1_I_r} that
\[
C_1(q)\left(I_{\frac{p(\cdot)}{q}}(Su)\right)^q \le \rho_1 <
C_1(q)\bigl(Y_1^{(q)}\bigr)^{p^{\pm}},
\]
and so, the comparison between $\rho_1$ and $C_1(q)$ determines whether the
modular is evaluated in the ``large-gauge'' or ``small-gauge'' regime.  We want to show that in each case a contradiction thus follows; this part of the proof tracks \cite[pp. 23--24]{goodrich2025luxemburgnormlocalisationnonlocal} very closely.

\subsubsection*{Case 1: $\rho_1/C_1(q)\ge 1$}
By hypothesis we have that
$C_1(q)\bigl(Y_1^{(q)}\bigr)^{p^-}>\rho_1$ so that $Y_1^{(q)}>1$. From
\eqref{eq:lower-gauge-bound-divided} we see that
$\Lambda_{\frac{p(\cdot)}{q}}(Su)\ge Y_1^{(q)}$, and so,
$\Lambda_{\frac{p(\cdot)}{q}}(Su)>1$. Corollary~\ref{cor:mod_scaling_concave} implies that
\[
I_{\frac{p(\cdot)}{q}}(Su)
 \ge
 \Lambda_{\frac{p(\cdot)}{q}}(Su)^{\frac{p^-}{q}}
 \ge
 \bigl(Y_1^{(q)}\bigr)^{\frac{p^-}{q}}
\]
so that by both \eqref{eq:theorem_condition1_concave} and \eqref{eq:rho1_ge_C1_I_r} we are led to
\begin{equation}\label{eq:rho1-upper-contradiction}
C_1(q)\bigl(Y_1^{(q)}\bigr)^{p^-} > \rho_1
 \ge C_1(q)\left(I_{\frac{p(\cdot)}{q}}(Su)\right)^q
\;\implies\;
\bigl(Y_1^{(q)}\bigr)^{p^-} > \left(I_{\frac{p(\cdot)}{q}}(Su)\right)^q.
\end{equation}
But \eqref{eq:rho1-upper-contradiction} yields the contradiction
\begin{equation*}
\bigl(Y_1^{(q)}\bigr)^{p^-}
 > \left(I_{\frac{p(\cdot)}{q}}(Su)\right)^q
 \ge \bigl(Y_1^{(q)}\bigr)^{\,q\cdot\frac{p^-}{q}}
 = \bigl(Y_1^{(q)}\bigr)^{p^-}.
\end{equation*}

\subsubsection*{Case 2: $0 < \rho_1/C_1(q) < 1$}
Inequalities \eqref{eq:theorem_condition1_concave} and \eqref{eq:rho1_ge_C1_I_r} collectively imply that
\[
C_1(q)\bigl(Y_1^{(q)}\bigr)^{p^+} > \rho_1
 \ge C_1(q)\left(I_{\frac{p(\cdot)}{q}}(Su)\right)^q
\;\implies\;
\bigl(Y_1^{(q)}\bigr)^{p^+} > \left(I_{\frac{p(\cdot)}{q}}(Su)\right)^q.
\]
Note, moreover, that
$I_{\frac{p(\cdot)}{q}}(Su) < 1$ so that $\Lambda_{\frac{p(\cdot)}{q}}(Su) < 1$. Corollary~\ref{cor:mod_scaling_concave}, together with the inequality
$\Lambda_{\frac{p(\cdot)}{q}}(Su) \ge Y_1^{(q)}$ from
\eqref{eq:lower-gauge-bound-divided}, yields
\[
\bigl(Y_1^{(q)}\bigr)^{p^+}
 > \left(I_{\frac{p(\cdot)}{q}}(Su)\right)^q
 \ge \bigl(Y_1^{(q)}\bigr)^{\,q\cdot\frac{p^+}{q}}
 = \bigl(Y_1^{(q)}\bigr)^{p^+},
\]
which again yields the desired contradiction.

Since both cases lead to a contradiction, it follows that $u\neq Su+\mu\bm{1}$ whenever
$u\in\partial\widehat V_{\rho_1}$. Therefore, from part~(1)
of Lemma~\ref{lem:fixed-point-index} we conclude that
\begin{equation}\label{eq:index-zero}
i_{\mathscr{K}}\!\left(S,\widehat{V}_{\rho_1}\right)=0.
\end{equation}

We next appeal to part~(2) of Lemma~\ref{lem:fixed-point-index}. To this end, we
show that, whenever \(u\in\partial\widehat V_{\rho_2}\) and \(\mu\ge1\), one has
\[
 Su\ne \mu u.
\]
Suppose for contradiction that there exist
\(u\in\partial\widehat V_{\rho_2}\) and \(\mu\ge1\) such that $Su\equiv \mu u$, which leads to
\begin{equation}\label{eq:outer-power-identity}
 (Su)(t)^{p(t)}
 =\bigl(\mu u(t)\bigr)^{p(t)}
 =\mu^{p(t)}u(t)^{p(t)}
 \qquad\text{for all }t\in[0,1].
\end{equation}
Since \(u\in\partial\widehat V_{\rho_2}\),
Lemma~\ref{lem:boundary-characterisation-concave} gives \(\Phi(u)=\rho_2\), and because \(\rho_2>\rho_1\), we have \(\Phi_{\rho_1}(u)=\max\{\Phi(u),\rho_1\}=\rho_2\),
and so, \(A(\Phi_{\rho_1}(u))=A(\rho_2)\).
Similar to \cite[p. 24]{goodrich2025luxemburgnormlocalisationnonlocal}, set
\[
 g(s):=\frac{f(s,u(s))}{\,f^{M}_{[0,1]\times[0,B_{\infty,\rho_2}^{(q)}]}\,}\in[0,1],
\]
where \(B_{\infty,\rho_2}^{(q)}\) is from
Lemma~\ref{lem:upper-localisation-divided-exponent}. Then
\[
 (Su)(t)
 = \frac{\lambda}{A(\rho_2)}\int_0^1 G(t,s)f(s,u(s))\,ds
 = \left[\frac{\lambda f^{M}_{[0,1]\times[0,B_{\infty,\rho_2}^{(q)}]}}{A(\rho_2)}\right]
   \int_0^1 G(t,s)g(s)\,ds.
\]

Since \(0\le g(s)\le 1\) and \(G(t,s)\ge0\), we obtain
\[
 0 \le \frac{(Su)(t)}
      {\left[\dfrac{\lambda\,f^{M}_{[0,1]\times[0,B_{\infty,\rho_2}^{(q)}]}}{A(\rho_2)}\right]}
  = \int_0^1 G(t,s)g(s)\,ds
  \le \int_0^1 G(t,s)\,ds.
\]
By Lemma~\ref{lem:gauge-lattice-monotone} applied with
$s(\cdot)=q\,p(\cdot)$, which is admissible since the lemma requires only $s>0$, we
obtain
\begin{equation}\label{eq:Su-Lqp-upper-concave}
 \Lambda_{q\,p(\cdot)}(Su)
 \le
 \left[\frac{\lambda f^{M}_{[0,1]\times[0,B_{\infty,\rho_2}^{(q)}]}}{A(\rho_2)}\right]\,
 \Lambda_{q\,p(\cdot)}\!\left(\int_0^1 G(\cdot,s)\,ds\right).
\end{equation}
Using \(\left(b*u^{p(\cdot)}\right)(1)=\rho_2\), the bound
\(\mu^{p(t)}\ge1\) (valid since \(\mu\ge1\) and \(p(t)>0\)), the identity
\eqref{eq:outer-power-identity}, and the forward H\"older inequality with
conjugate exponents \(q\) and \(\displaystyle\frac{q}{q-1}\), analogous to \cite[(2.23)]{goodrich2025luxemburgnormlocalisationnonlocal} we obtain
\begin{equation}\label{eq:rho2-upper-bound-concave}
\begin{split}
\rho_2
&\le\int_0^1 b(1-t)\,\mu^{p(t)}\big(u(t)\big)^{p(t)}\,dt\\
&=\int_0^1 b(1-t)\big((Su)(t)\big)^{p(t)}\,dt\\
&\le\left(\int_0^1 \big|b(1-t)\big|^{\frac{q}{q-1}}\,dt\right)^{\!\frac{q-1}{q}}
  \left(\int_0^1 \big|(Su)(t)\big|^{q\,p(t)}\,dt\right)^{1/q}=C_2(q)\left(I_{q\,p(\cdot)}(Su)\right)^{\frac{1}{q}}.
\end{split}
\end{equation}

It remains to show that \eqref{eq:Su-Lqp-upper-concave} and \eqref{eq:rho2-upper-bound-concave} are jointly contradictory.  However, by means of Corollary~\ref{cor:mod_scaling_concave}, we may proceed exactly as in \cite[pp. 25--26]{goodrich2025luxemburgnormlocalisationnonlocal}, replacing in the cited reference $B_{\infty,\rho_2}$ with $B_{\infty,\rho_2}^{(q)}$ and, likewise, $\Vert\cdot\Vert_{L^{q\cdot p(\cdot)}}$ with $\Lambda_{qp(\cdot)}$.  Doing so, we conclude that no $u\in\partial\widehat V_{\rho_2}$
satisfies $Su\equiv\mu u$ with $\mu\ge1$ in this case.  Consequently, part~(2) of Lemma~\ref{lem:fixed-point-index} implies that
\begin{equation}\label{eq:index-one}
i_{\mathscr{K}}\!\left(S,\widehat V_{\rho_2}\right)=1.
\end{equation}

In summary, combining \eqref{eq:index-zero} and \eqref{eq:index-one}, we infer
from part~(4) of Lemma~\ref{lem:fixed-point-index} that there exists
\[
 u_0\in \widehat V_{\rho_2}\setminus\overline{\widehat V_{\rho_1}}
\]
such that $Su_0=u_0$, from which it follows that $Tu_0\equiv u_0$, and so, $u_0$ is a positive solution of
\eqref{eq1.1}, subject to the boundary data generated by the Green's function
$G$. Since $u_0\notin\overline{\widehat V_{\rho_1}}$, we have
$\Phi(u_0)>\rho_1>0$.  The map $\tau\mapsto\Phi(\tau u_0)$ is continuous by
Lemma~\ref{lem:Phi-continuity-concave} and strictly increasing by
Lemma~\ref{lem:positive-scaling-admissible}, and satisfies
$\Phi(\tau u_0)\le\tau^{p^-}\Phi(u_0)\to0$ as $\tau\downarrow0$; hence, there
exists $\tau_0\in(0,1)$ with $\Phi(\tau_0u_0)=\rho_1$, and
Lemmata~\ref{lem:positive-scaling-admissible}
and~\ref{lem:boundary-characterisation-concave} give
$\tau_0u_0\in\partial\widehat V_{\rho_1}$.  Applying
Lemma~\ref{lem:Lp_lower_combined} to $\tau_0u_0$ and using the positive
homogeneity of Lemma~\ref{lem:gauge-lattice-monotone} together with
$\tau_0<1$ gives the lower bound.  Since
$u_0\in\widehat V_{\rho_2}$, Corollary~\ref{cor:Vrho-bounded} gives $\displaystyle\|u_0\|_{L^{p(\cdot)}}\le
B_{\infty,\rho_2}^{(q)}=M_{\rho_2}$.  Consequently, we obtain the Luxemburg gauge localisation
\begin{equation}
 \eta_0(\beta-\alpha)^{\frac{1}{p^-}}m_{\rho_1}
 \le \|u_0\|_{L^{p(\cdot)}}
 \le M_{\rho_2},
\end{equation}
as desired.
\end{proof}

Before turning to our example, we record a direct \textquotedblleft$q$-free criterion\textquotedblright\ available
when $b\equiv\bm 1$.  Although this choice of $b$ satisfies the hypotheses of
Theorem~\ref{thm:existence-concave} for every admissible $q>1$, the criterion
below is obtained directly from the identity
\[
\Phi(u)=\left(\bm 1*u^{p(\cdot)}\right)(1)=I_{p(\cdot)}(u),
\]
rather than by setting $q=1$ in Theorem~\ref{thm:existence-concave}.  We state only the parameter
regime required in Example~\ref{example2.12}; the complementary regimes are
obtained analogously by using the corresponding modular--gauge exponents.

\begin{corollary}\label{cor:b-equiv-one}
Assume \textnormal{(H1)}, \textnormal{(H2)}, and \textnormal{(H3)}, and suppose
that $b\equiv\bm 1$.  Let the numbers $\rho_1$ and $\rho_2$ from
\textnormal{(H1)}(3) satisfy $0<\rho_1<1\le\rho_2$.  Define
\[
B_{\infty,\rho_1}^{*}:=\frac{\rho_1^{1/p^+}}{\eta_0(\beta-\alpha)^{1/p^-}},
\qquad
B_{\infty,\rho_2}^{*}:=\frac{\rho_2^{1/p^-}}{\eta_0(\beta-\alpha)^{1/p^-}},
\]
and put
\[
N_1^{*}:=\frac{\lambda}{A(\rho_1)}
f^{m}_{[\alpha,\beta]\times[\eta_0\rho_1^{1/p^-},\,B_{\infty,\rho_1}^{*}]}
\inf_{t\in[\alpha,\beta]}\int_\alpha^\beta G(t,s)\,ds,
\qquad h(t):=\int_0^1G(t,s)\,ds.
\]
Suppose that $0<N_1^{*}\le1$ and that
\begin{equation}\label{eq:cor-inner}
(\beta-\alpha)\bigl(N_1^{*}\bigr)^{p^+}>\rho_1
\end{equation}
and
\begin{equation}\label{eq:cor-outer}
\left(\frac{\lambda f^{M}_{[0,1]\times[0,B_{\infty,\rho_2}^{*}]}}{A(\rho_2)}
\Lambda_{p(\cdot)}(h)\right)^{p^+}<\rho_2.
\end{equation}
Then problem \eqref{eq1.1}, subject to the boundary data generated by the
Green's function $G$, has at least one positive solution
$u_0\in\widehat V_{\rho_2}\setminus\overline{\widehat V_{\rho_1}}$.
Moreover, $u_0$ satisfies the Luxemburg gauge localisation
\[
\eta_0(\beta-\alpha)^{1/p^-}\rho_1^{1/p^-}\le\|u_0\|_{L^{p(\cdot)}}\le\rho_2^{1/p^-}.
\]
\end{corollary}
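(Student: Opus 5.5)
The plan is to rerun the proof of Theorem~\ref{thm:existence-concave}, using the identity $\Phi(u)=I_{p(\cdot)}(u)$ (valid since $b\equiv\bm 1$) in place of the reverse and forward H\"older steps. Then every comparison between $\Phi$ and a gauge goes through Corollary~\ref{cor:mod_scaling_concave} at the base exponent $p(\cdot)$, so no auxiliary $q$ is needed.

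\textbf{Localisation on the boundaries.} If $u\in\partial\widehat V_\rho$, then $I_{p(\cdot)}(u)=\rho$ by Lemma~\ref{lem:boundary-characterisation-concave}.
\begin{itemize}
\item For $\rho=\rho_1<1$, Lemma~\ref{lem:Luxemburg_basic_concave} forces $\Lambda_{p(\cdot)}(u)<1$. Corollary~\ref{cor:mod_scaling_concave}(2) then gives $\rho_1^{1/p^-}\le\Lambda_{p(\cdot)}(u)\le\rho_1^{1/p^+}$.
\item For $\rho=\rho_2\ge1$, Corollary~\ref{cor:mod_scaling_concave}(1) gives $\Lambda_{p(\cdot)}(u)\le\rho_2^{1/p^-}$.
\end{itemize}
Combining these with Lemma~\ref{lem:thickness-concave} and Lemma~\ref{lem:gauge-le-sup} yields $\|u\|_\infty\le B^*_{\infty,\rho_i}$ on $\partial\widehat V_{\rho_i}$. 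It also yields $\|u\|_\infty\ge\Lambda_{p(\cdot)}(u)\ge\rho_1^{1/p^-}$ on $\partial\widehat V_{\rho_1}$, hence $u\ge\eta_0\rho_1^{1/p^-}$ on $[\alpha,\beta]$.

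Boundedness of $\widehat V_\rho$ follows as in Corollary~\ref{cor:Vrho-bounded}: rescale $u$ to $\lambda^*u\in\partial\widehat V_\rho$ and use the bound above. Alternatively, it follows directly from Corollary~\ref{cor:Vrho-bounded}, since $b\equiv\bm 1$ satisfies its hypothesis for any $q>1$. Lemma~\ref{lem:S-compact} and the index framework then apply unchanged.

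\textbf{Inner index.} Suppose $u=Su+\mu\bm 1$ with $u\in\partial\widehat V_{\rho_1}$ and $\mu>0$.
\begin{itemize}
\item Exactly as in \eqref{eq:Su-min-integral-concave}, $(Su)(t)\ge N_1^*$ on $[\alpha,\beta]$.
\item Monotonicity of the integrand gives $\rho_1=I_{p(\cdot)}(u)\ge I_{p(\cdot)}(Su)\ge\int_\alpha^\beta (N_1^*)^{p(t)}\,dt$.
\item Since $0<N_1^*\le1$, we have $(N_1^*)^{p(t)}\ge(N_1^*)^{p^+}$, so the last integral is at least $(\beta-\alpha)(N_1^*)^{p^+}$.
\end{itemize}
This contradicts \eqref{eq:cor-inner}, so $i_{\mathscr K}(S,\widehat V_{\rho_1})=0$. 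Working directly with the modular avoids the gauge regime split altogether, which is why the hypothesis $N_1^*\le1$ is imposed.

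\textbf{Outer index.} Suppose $Su=\mu u$ with $u\in\partial\widehat V_{\rho_2}$ and $\mu\ge1$.
\begin{itemize}
\item As in \eqref{eq:Su-Lqp-upper-concave}, but at exponent $p(\cdot)$, we get $\Lambda_{p(\cdot)}(Su)\le L:=\frac{\lambda f^M}{A(\rho_2)}\Lambda_{p(\cdot)}(h)$, using $\|u\|_\infty\le B^*_{\infty,\rho_2}$.
\item Also $\rho_2\le I_{p(\cdot)}(\mu u)=I_{p(\cdot)}(Su)$.
\item If $\Lambda_{p(\cdot)}(Su)\ge1$, Corollary~\ref{cor:mod_scaling_concave}(1) gives $I_{p(\cdot)}(Su)\le\Lambda_{p(\cdot)}(Su)^{p^+}\le L^{p^+}<\rho_2$ by \eqref{eq:cor-outer}, a contradiction.
\item If $\Lambda_{p(\cdot)}(Su)<1$, then $I_{p(\cdot)}(Su)<1\le\rho_2$, again a contradiction.
\end{itemize}
Hence $i_{\mathscr K}(S,\widehat V_{\rho_2})=1$, and Lemma~\ref{lem:fixed-point-index}(4) produces $u_0\in\widehat V_{\rho_2}\setminus\overline{\widehat V_{\rho_1}}$.

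\textbf{Localisation of $u_0$.} For the upper bound, rescale $u_0$ up to the boundary $\partial\widehat V_{\rho_2}$ as in Corollary~\ref{cor:Vrho-bounded}, so that $\Lambda_{p(\cdot)}(u_0)\le\rho_2^{1/p^-}$. For the lower bound, pick $\tau_0<1$ with $\tau_0u_0\in\partial\widehat V_{\rho_1}$, as at the end of the proof of Theorem~\ref{thm:existence-concave}. Then $\Lambda_{p(\cdot)}(u_0)\ge\Lambda_{p(\cdot)}(\tau_0u_0)\ge\rho_1^{1/p^-}$, which is even stronger than the stated lower bound because $\eta_0(\beta-\alpha)^{1/p^-}\le1$.

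\textbf{Main obstacle.} I expect the hardest part to be the bookkeeping of the regimes: $\rho_1<1$ fixes the small-gauge case, and $\rho_2\ge1$ makes the small-gauge branch of the outer argument trivial. The one genuine point to check is the direction of the power inequality $(N_1^*)^{p(t)}\ge(N_1^*)^{p^+}$, which holds precisely because $N_1^*\le1$.
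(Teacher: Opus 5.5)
Your proposal is correct and follows the paper's proof essentially step for step. The identity $\Phi=I_{p(\cdot)}$ replaces both H\"older steps, the inner contradiction runs through $(Su)\ge N_1^{*}$ on $[\alpha,\beta]$ together with $(N_1^{*})^{p(t)}\ge(N_1^{*})^{p^+}$, and the outer contradiction uses part~(1) of Corollary~\ref{cor:mod_scaling_concave} once the small-gauge case is excluded by $\rho_2\ge1$.

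The differences are minor. You obtain $\|u\|_\infty\ge\Lambda_{p(\cdot)}(u)\ge\rho_1^{1/p^-}$ on $\partial\widehat V_{\rho_1}$ internally, via Lemma~\ref{lem:gauge-le-sup} and part~(2) of Corollary~\ref{cor:mod_scaling_concave}, rather than citing \cite[Lemma~3.3]{goodrich2025unified}. Applied to $\tau_0u_0$, the same estimate gives the slightly sharper lower localisation $\Lambda_{p(\cdot)}(u_0)\ge\rho_1^{1/p^-}$. The paper's route through Lemma~\ref{lem:Lp_lower_combined} loses the factor $\eta_0(\beta-\alpha)^{1/p^-}$.
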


\begin{proof}
Since $b\equiv\bm 1$, we have $\Phi\equiv I_{p(\cdot)}$, so that
$I_{p(\cdot)}(u)\le\rho$ on $\overline{\widehat V_\rho}$, with equality on
$\partial\widehat V_\rho$ by Lemma~\ref{lem:boundary-characterisation-concave}.
The proof of Theorem~\ref{thm:existence-concave} applies with only the following
three modifications, each of which replaces a step that passed through the
divided exponent.

\emph{(i) The upper bounds.}  If $u\in\overline{\widehat V_{\rho_1}}$
and $u\equiv\bm 0$, then the desired estimate is immediate.  If
$u\not\equiv\bm 0$, then $I_{p(\cdot)}(u)\le\rho_1<1$ forces
$0<\Lambda_{p(\cdot)}(u)<1$; otherwise part~(1) of
Corollary~\ref{cor:mod_scaling_concave} would give
$I_{p(\cdot)}(u)\ge1$.  Part~(2) of that corollary therefore gives
\[
  \Lambda_{p(\cdot)}(u)\le\rho_1^{1/p^+}.
\]
If $u\in\overline{\widehat V_{\rho_2}}$, then part~(1) of that corollary gives
\[
  \Lambda_{p(\cdot)}(u)\le\rho_2^{1/p^-}
\]
when $\Lambda_{p(\cdot)}(u)\ge1$, and otherwise $\Lambda_{p(\cdot)}(u)<1\le\rho_2^{1/p^-}$.
Lemma~\ref{lem:thickness-concave} converts these into
\[
  \|u\|_\infty\le B^{*}_{\infty,\rho_i},
\]
which replace $B^{(q)}_{\infty,\rho_i}$ throughout.

\emph{(ii) The inner mass inequality, without H\"older.}  For
$u\in\partial\widehat V_{\rho_1}$ with $u=Su+\mu\bm 1$ we have $u\ge Su$, and so,
\[
  \rho_1=I_{p(\cdot)}(u)\ge I_{p(\cdot)}(Su).
\]
Since $\|u\|_\infty\ge\rho_1^{1/p^-}$ by \cite[Lemma~3.3]{goodrich2025unified}, the
thickness condition gives
\[
  \eta_0\rho_1^{1/p^-}\le u(s)\le B^{*}_{\infty,\rho_1}
\]
on $[\alpha,\beta]$, and so, $(Su)(t)\ge N_1^{*}$ there as in \eqref{eq:Su-min-integral-concave}.  As
$0<N_1^{*}\le1$ and $p(t)\le p^+$, it follows that
\[
  \rho_1\ge I_{p(\cdot)}(Su)\ge\int_\alpha^\beta\bigl((Su)(t)\bigr)^{p(t)}\,dt
  \ge(\beta-\alpha)\bigl(N_1^{*}\bigr)^{p^+}>\rho_1
\]
by \eqref{eq:cor-inner}, which is a contradiction.

\emph{(iii) The outer estimate at the base exponent.}  For
$u\in\partial\widehat V_{\rho_2}$ with $Su=\mu u$, $\mu\ge1$, we have
\[
  \rho_2=I_{p(\cdot)}(u)\le I_{p(\cdot)}(Su),
\]
and $\Phi_{\rho_1}(u)=\rho_2$ gives
\[
  0\le Su\le\left[\frac{\lambda f^{M}_{[0,1]\times[0,B^{*}_{\infty,\rho_2}]}}{A(\rho_2)}\right]h,
\]
from which it follows that
\[
\Lambda_{p(\cdot)}(Su)
\le
\frac{\lambda
f^{M}_{[0,1]\times[0,B^{*}_{\infty,\rho_2}]}}
{A(\rho_2)}
\Lambda_{p(\cdot)}(h)
\]
by Lemma~\ref{lem:gauge-lattice-monotone}; note that here $h$ is the quantity defined in the statement of the corollary.  Since $I_{p(\cdot)}(Su)\ge\rho_2\ge1$
we have $\Lambda_{p(\cdot)}(Su)\ge1$, so part~(1) of
Corollary~\ref{cor:mod_scaling_concave} gives 
\[
  \rho_2 \le \left( \frac{\lambda f^{M}_{[0,1]\times[0,B^{*}_{\infty,\rho_2}]}}{A(\rho_2)} \Lambda_{p(\cdot)}(h) \right)^{p^+} < \rho_2
\]
by \eqref{eq:cor-outer}, a contradiction.

Both the remaining index argument and the existence of
\[
u_0\in\widehat V_{\rho_2}\setminus\overline{\widehat V_{\rho_1}}
\]
follow exactly as in Theorem~\ref{thm:existence-concave}.  To obtain
the lower localisation, choose $\tau_0\in(0,1)$ as in the final part of the
proof of that theorem so that
\[
\Phi(\tau_0u_0)=\rho_1
\qquad\text{and hence}\qquad
\tau_0u_0\in\partial\widehat V_{\rho_1}.
\]
Lemma~\ref{lem:Lp_lower_combined} and the positive homogeneity of the
Luxemburg gauge then give
\[
\tau_0\Lambda_{p(\cdot)}(u_0)
=
\Lambda_{p(\cdot)}(\tau_0u_0)
\ge
\eta_0(\beta-\alpha)^{1/p^-}m_{\rho_1}.
\]
Since $\tau_0<1$, $(b*\bm 1)(1)=1$, and $\rho_1<1$, we have
$m_{\rho_1}=\rho_1^{1/p^-}$, and, therefore,
\[
\Lambda_{p(\cdot)}(u_0)
\ge
\eta_0(\beta-\alpha)^{1/p^-}\rho_1^{1/p^-}.
\]
The upper localisation follows directly from
$u_0\in\widehat V_{\rho_2}$ and the outer cap established in part~(i):
\[
\Lambda_{p(\cdot)}(u_0)\le\rho_2^{1/p^-}.
\]
This completes the proof of the corollary.
\end{proof}

We conclude with an example in order to illustrate not only the application of Corollary \ref{cor:b-equiv-one} but also the improvements that our framework affords over existing methodologies.

\begin{example}\label{example2.12}
We illustrate the effect of our framework on a concave modification of a problem
studied in \cite[Example 2.13]{goodrich2025unified}. That example was set in the
convex regime; here we retain its coefficient $A$, its annulus, and its cone
geometry, but replace the exponent by one taking values in $(0,1)$, so that the
problem falls within the concave regime treated in the present work. Hence, $A$, the
Green's function, and $\alpha,\beta,\eta_0$ are common to both examples.

So, as in \cite[Example 2.13]{goodrich2025unified}, we take $b\equiv\bm{1}$.  Set
\[
 A(t):=\frac{1000}{3}\,t\sin\!\left(\frac{\pi}{6}t\right)
 \qquad\text{and}\qquad
 p(t):=\frac{3}{4}+\frac{1}{8}\cos(\pi t).
\]
With the Dirichlet Green's function $G$ from \eqref{eq:G_dirichlet}, consider the
nonlocal boundary value problem
\begin{equation}\label{eq2.35}
\begin{aligned}
- A\!\left(\int_0^1\big(u(s)\big)^{\frac{3}{4}+\frac{1}{8}\cos(\pi s)}\,ds\right) u''(t)
 &= \lambda f\big(t,u(t)\big), \quad 0<t<1,\\
u(0)&=u(1)=0.
\end{aligned}
\end{equation}

For this exponent function $p$ we have $p^-=\tfrac{5}{8}$ and $p^+=\tfrac{7}{8}$,
so that $0<p^-\le p(t)\le p^+<1$ and \textnormal{(H2)} holds. Choose
$\displaystyle\alpha=\frac{1}{4}$ and $\displaystyle\beta=\frac{3}{4}$; then
(see \cite{erbe_positive_2004})
\[
 \eta_0=\frac{1}{4},\quad
 \max_{\tau\in[0,1]}\int_{\alpha}^{\beta} G(\tau,s)\,ds=\frac{3}{32},\quad
 \inf_{t\in[\alpha,\beta]}\int_{\alpha}^{\beta} G(t,s)\,ds=\frac{1}{16},\quad
 \max_{\tau\in[0,1]}\int_{0}^{1} G(\tau,s)\,ds=\frac{1}{8}.
\]
We fix $\rho_1=\dfrac{1}{2500}$ and $\rho_2=3$. The coefficient $A$ is positive on
$[\rho_1,\rho_2]\subset(0,6)$, since $\sin(\pi t/6)>0$ there; in particular
\[
 A(\rho_1)=A\!\left(\tfrac{1}{2500}\right)\approx 2.7925\times10^{-5},
 \qquad
 A(\rho_2)=A(3)=\frac{1000}{3}\cdot 3\cdot\sin\frac{\pi}{2}=1000.
\]
Since $b\equiv\bm{1}$, both comparisons below are carried out without reference to
the auxiliary parameter $q$, and are therefore governed by
Corollary~\ref{cor:b-equiv-one} rather than by Theorem~\ref{thm:existence-concave}
itself. Indeed, the nonlocal boundary functional reduces
exactly to the modular,
\[
 \left(b*u^{p(\cdot)}\right)(1)=\int_0^1 u(t)^{p(t)}\,dt=I_{p(\cdot)}(u),
\]
so that, on $\partial\widehat V_\rho$, one has $I_{p(\cdot)}(u)=\rho$, and the
upper cap follows directly from the modular--Luxemburg gauge comparison of
Corollary~\ref{cor:mod_scaling_concave}, with no reverse-H\"older step required.

The two competing upper caps share the thickness prefactor
\[
 \frac{1}{\eta_0(\beta-\alpha)^{1/p^-}}
 =\frac{1}{\tfrac14\,(1/2)^{8/5}}
 =4\cdot 2^{8/5}
 \approx 12.1257.
\]
The cap obtained from the older concave sub-additivity argument
\cite[Theorem~3.6]{goodrich2025unified} carries a structural additive correction,
\[
 \overline M_\rho
 :=\frac{1}{\eta_0(\beta-\alpha)^{1/p^-}}\,(\rho+1)^{1/p^-}
 =4\cdot 2^{8/5}\,(\rho+1)^{8/5},
\]
whereas the cap furnished by the present hybrid-cone method is purely
multiplicative, namely,
\[
 B^{*}_{\infty,\rho}
 :=\frac{1}{\eta_0(\beta-\alpha)^{1/p^-}}
 \begin{cases}
   \rho^{1/p^+}, & 0<\rho<1,\\[4pt]
   \rho^{1/p^-}, & \rho\ge1.
 \end{cases}
\]
The entire difference between the two is the additive $+1$ inside the bracket,
which the sub-additivity step \cite[Lemma~3.1]{goodrich2025unified} introduces and
the direct modular--gauge comparison removes.
\subsection*{Original concave sub-additivity framework \cite[Theorem~3.6]{goodrich2025unified}}
Provided that $\lambda$ and $f$ satisfy both
\begin{equation}\label{eq:old-inner}
\lambda f^m_{\left[\frac14,\frac34\right]\times
  \left[\eta_0 m_{\rho_1},\,\overline M_{\rho_1}\right]}
 > \frac{A(\rho_1)}{3/32}
   \left[1+\frac{\rho_1}{\eta_0^{\,7/8}\,(1/2)}\right]^{8/5}
 \approx 2.99153\times10^{-4}
\end{equation}
and
\begin{equation}\label{eq:old-outer}
\lambda f^M_{[0,1]\times\left[0,\,\overline M_{\rho_2}\right]}
 < \frac{A(\rho_2)}{1/8}\,3^{8/7}
 = 8000\cdot 3^{8/7}
 \approx 2.80783\times10^{4},
\end{equation}
where
\[
 \eta_0 m_{\rho_1}=\frac14\!\left(\frac{1}{2500}\right)^{8/5}\approx 9.1461\times10^{-7},
 \qquad
 \overline M_{\rho_1}=4\cdot 2^{8/5}\!\left(1+\tfrac{1}{2500}\right)^{8/5}\approx 12.1335,
\]
\[
 \overline M_{\rho_2}=4\cdot 2^{8/5}\cdot 4^{8/5}\approx 111.4305,
\]
then \cite[Theorem~3.6]{goodrich2025unified} implies, in the special case $b\equiv\textbf{1}$, with the $q=1$ sub-additivity estimate, that problem \eqref{eq2.35}
has at least one positive solution
$u_0\in\widehat V_3\setminus\overline{\widehat V_{1/2500}}$ satisfying the
supremum-norm localisation
\[
 \left(\frac{\rho_1}{(b*\bm1)(1)}\right)^{1/p^+}+\varepsilon(\rho_1,b)
 \le \|u_0\|_\infty
 \le \overline M_{\rho_2}\approx 111.4305.
\]

\subsection*{Hybrid cone framework (Theorem~\ref{thm:existence-concave})}
Suppose that $\lambda$ and $f$ satisfy both
\begin{equation}\label{eq:new-inner}
\lambda f^m_{\left[\frac14,\frac34\right]\times
  \left[\eta_0 m_{\rho_1},\,B^{*}_{\infty,\rho_1}\right]}
 > \frac{A(\rho_1)}{1/16}\left(\frac{\rho_1}{\beta-\alpha}\right)^{8/7}
 \approx 1.29060\times10^{-7}
\end{equation}
and
\begin{equation}\label{eq:new-outer}
\lambda f^M_{[0,1]\times\left[0,\,B^{*}_{\infty,\rho_2}\right]}
 < \frac{A(\rho_2)\,\rho_2^{1/p^+}}
        {\Lambda_{p(\cdot)}\!\left(\int_0^1 G(\cdot,s)\,ds\right)}
 = \frac{1000\cdot 3^{8/7}}{\Lambda_{p(\cdot)}(h)}
 \approx 4.35375\times10^{4},
\end{equation}
where
\[
 \eta_0 m_{\rho_1}=\frac14\!\left(\frac{1}{2500}\right)^{8/5}\approx 9.1461\times10^{-7},
 \qquad
 B^{*}_{\infty,\rho_1}=4\cdot 2^{8/5}\!\left(\frac{1}{2500}\right)^{8/7}\approx 1.5862\times10^{-3},
\]
\[
 B^{*}_{\infty,\rho_2}=4\cdot 2^{8/5}\cdot 3^{8/5}\approx 70.3237,
\]
and $\displaystyle h(t):=\int_0^1 G(t,s)\,ds=\frac12\,t(1-t)$, with
$\Lambda_{p(\cdot)}(h)\approx 0.0806155$ being the Luxemburg gauge of $h$ at the exponent
$p(\cdot)$, computed as the unique $\delta>0$ solving
$\displaystyle\int_0^1\bigl(h(t)/\delta\bigr)^{p(t)}\,dt=1$. Then
Corollary~\ref{cor:b-equiv-one} implies that problem \eqref{eq2.35} has at least one positive
solution
$u_0\in\widehat V_3\setminus\overline{\widehat V_{1/2500}}$ satisfying the
Luxemburg gauge localisation
\[
 \eta_0(\beta-\alpha)^{1/p^-}m_{\rho_1}
 \le \|u_0\|_{L^{p(\cdot)}}
 \le \rho_2^{1/p^-}=3^{8/5}\approx 5.7995.
\]

In light of the preceding, we enumerate the improvements afforded by the present
methodology; throughout, the comparison is the special $b\equiv\bm1$, $q$-free
comparison described above.  Recall that the comparisons enumerated below are depicted graphically in Figures~\ref{fig1}--\ref{fig2} from Section 1.

\begin{enumerate}[label=(\roman*),leftmargin=1.2cm] \label{enumeration}
\item \textbf{Inner rectangle height.}
The admissible $u$-range shrinks from $\overline M_{\rho_1}\approx 12.1335$ to
$B^{*}_{\infty,\rho_1}\approx 1.5862\times10^{-3}$, a reduction of about
$99.99\%$, which makes the condition involving $f^m$ markedly easier to satisfy.
\item \textbf{Outer rectangle height.}
The $u$-range shrinks from $\overline M_{\rho_2}\approx 111.4305$ to
$B^{*}_{\infty,\rho_2}\approx 70.3237$, a reduction of about $36.89\%$, which
makes the condition involving $f^M$ easier to satisfy.
\item \textbf{Direct localisation.}
The older framework localises in the supremum norm, giving
$\|u_0\|_\infty\le \overline M_{\rho_2}\approx 111.4305$, whereas the present
framework localises in the Luxemburg gauge, giving
$\|u_0\|_{L^{p(\cdot)}}\le 3^{8/5}\approx 5.7995$. Measured in the same style as
the convex example's comparison, this is an approximately $94.80\%$ tightening
(about $19\times$ tighter).
\item \textbf{Inner radius threshold.}
Comparing \eqref{eq:old-inner} and \eqref{eq:new-inner}, the threshold on
$\lambda f^m$ falls from about $2.99153\times10^{-4}$ to about
$1.29060\times10^{-7}$, a reduction of about $99.96\%$ (roughly $2318\times$
smaller). The improvement is due entirely to the removal of the subadditive
correction, whose additive $+1$ prevents the older threshold from decaying with the
small inner mass: the non-Green factor falls from
\[
  \bigl[1+\rho_1\eta_0^{-7/8}(\beta-\alpha)^{-1}\bigr]^{8/5}\approx1.00431
\]
to
\[
  \bigl(\rho_1/(\beta-\alpha)\bigr)^{8/7}\approx2.88851\times10^{-4},
\]
a ratio of about $3476.91$. This gain is achieved \emph{despite} the more conservative Green constant
\[
  \inf_{t\in[\alpha,\beta]}\int_\alpha^\beta G(t,s)\ ds=\frac{1}{16},
\]
used here in place of
\[
  \max_{t\in[0,1]}\int_\alpha^\beta G(t,s)\ ds=\frac{3}{32},
\]
which offsets the improvement by a factor of $\frac{3}{2}$; and indeed
$3476.91/1.5\approx2317.94$.
\item \textbf{Outer radius allowance.}
Comparing \eqref{eq:old-outer} and \eqref{eq:new-outer}, the allowance on
$\lambda f^M$ rises from about $2.80783\times10^{4}$ to about
$4.35375\times10^{4}$, an increase of about $55.06\%$, which makes the outer
condition easier to satisfy.
\end{enumerate}

This demonstrates that, even in the strictly concave regime, the hybrid framework
offers a substantial improvement for this class of problem, and that the Luxemburg
gauge $\|\cdot\|_{L^{p(\cdot)}}$ provides a more natural and more accurate
localisation of the solution of \eqref{eq1.1}.
\end{example}

\begin{remark}\label{remark3.5}
Although in this work we have focused on the setting of \emph{existence} of solution, there are several recent papers on \emph{non-existence} of solution in the nonlocal differential equations setting -- e.g., see the recent work by both the first author \cite{goodrich19,goodrich24,goodrich25,goodrich30} and by Shibata \cite{shibata1,shibata2,shibata3}.  In light of the new, Luxemburg-driven methodology introduced in both this present work as well as the companion work \cite{goodrich2025luxemburgnormlocalisationnonlocal}, it could be an interesting avenue for future research to see whether this new methodology can provide enhanced non-existence results in the setting of nonlocal equations with variable growth.
\end{remark}

%\textbf{Acknowledgements.} We gratefully acknowledge the careful reading by the anonymous referee.  Their suggestions for improvement were most helpful.

\textbf{Author Contribution Statement.} Both authors (CSG and GN) contributed equally to the research.  GN wrote an initial version of the paper, which CSG then refined into the final version of the paper; GN created the figures.  Both CSG and GN read and approved the final version of the paper.

\textbf{Funding.} This research received no funding.

\textbf{Data Availability Statement.} This manuscript has no associated data.

\end{document}